\definecolor{skyblue}{HTML}{3465a4}
\crefname{enumi}{}{}
\crefname{property}{property}{properties}
\crefname{LEM}{Lemma}{the Lemmas}
\renewcommand{\PrintDOI}[1]{\doi{#1}}
\newtheorem{THM}{Theorem}[section]
\newtheorem{LEM}[THM]{Lemma}
\newtheorem{COR}[THM]{Corollary}
\theoremstyle{definition}
\newtheorem{EX}[THM]{Example}
\newcommand{\abs}[1]{\lvert#1\rvert}
\newcommand{\menge}[1]{\left\{#1\right\}}
\renewcommand{\phi}{\varphi}
\newcommand{\Nbb}{\mathbb{N}}
\newcommand{\N}{\mathbb{N}}
\newcommand{\join}{\lor}
\newcommand{\meet}{\land}
\newcommand{\sub}{\subseteq}
\newcommand{\sm}{\smallsetminus}
\newcommand{\es}{\emptyset}
\renewcommand{\le}{\leqslant}\renewcommand{\leq}{\leqslant}
\renewcommand{\ge}{\geqslant}\renewcommand{\geq}{\geqslant}
\newcommand{\cC}{\mathcal{C}}
\newcommand{\cD}{\mathcal{D}}
\newcommand{\cF}{\mathcal{F}}
\newcommand{\cG}{\mathcal{G}}
\newcommand{\cM}{\mathcal{M}}
\newcommand{\cP}{\mathcal{P}}
\def\restricts{\mathbin\restriction}
\newcommand{\shifting}[2]{\mathop{f\!\downarrow}\nolimits^{#1}_{#2}}
\renewcommand{\max}{\operatorname{\textsf{max}}}
\title{Obtaining trees of tangles from tangle-tree duality}
\author{Christian Elbracht\and Jakob Kneip\and Maximilian Teegen}
\date{19th November 2020}
\begin{document}
\maketitle
\begin{abstract}
    We demonstrate the versatility of the tangle-tree duality theorem for abstract separation systems \cite{TangleTreeAbstract}
    by using it to prove tree-of-tangles theorems.
    This approach allows us to strengthen some of the existing tree-of-tangles theorems by bounding the node degrees in them.
    We also present a slight strengthening and simplified proof of the duality theorem, which allows us to derive a tree-of-tangles theorem also for tangles of different orders.
\end{abstract}
\section{Introduction}
Over the last twenty years, tangles, originally developed by Robertson and Seymour as a tool in their monumental graph minor project \cite{GMX}, have evolved a lot. Originally defined specifically just for graphs, they have since been generalized not only to other combinatorial structures like matroids \cite{TanglesInMatroids} but even into an abstract setting in which concrete separations are replaced by an abstract poset with just some simple properties that reflect those that separations typically have \cites{AbstractSepSys,ProfilesNew,TangleTreeAbstract}. In all these settings, the general idea of tangles is to use them as a method to indirectly capture highly cohesive substructures of various kinds, by deciding for every low-order separation on which side of that separation the desired structure lies.

Already in the original work by Robertson and Seymour the theory of tangles has two major theorems: the tree-of-tangles theorem and the tangle-tree duality theorem. These two form the main pillars of tangle theory, and thereby of a central aspect of graph minor theory.

The first of these theorems allows one to distinguish all the tangles in a tree-like way, displaying their relative position in the underlying combinatorial structure. One of the most abstract variants of the tree-of-tangles theorem reads as follows:
\begin{THM}[name={\cite{AbstractTangles}*{Theorem 6}}, restate=totDaniel]\label{thm:Daniel} 
 Let $\vS$ be a structurally submodular separation system and $\cP$ a set of profiles of $S$. Then $\vS$ contains a tree set that distinguishes $\cP$.
\end{THM}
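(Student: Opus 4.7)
The plan is to derive the theorem directly from the tangle-tree duality theorem for abstract separation systems \cite{TangleTreeAbstract}. That theorem provides, for a structurally submodular separation system $\vS$ and a sufficiently well-behaved family $\cF$ of stars in $\vS$, one of two alternatives: either an $\cF$-avoiding consistent orientation of $\vS$ (an $\cF$-tangle) exists, or there is an $\cS$-tree over $\cF$. I will choose $\cF$ so that the first alternative is excluded by the very existence of the profiles in $\cP$, whence the $\cS$-tree provided by the second alternative will supply the required tree set.

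Concretely, I would let $\cF$ consist of those stars $\sigma \sub \vS$ that are contained in at most one profile of $\cP$. Then no profile $P \in \cP$ is an $\cF$-tangle: for each $P' \in \cP \sm \{P\}$, pick a separation $\vec s_{P'} \in P$ with $\cev s_{P'} \in P'$, and use structural submodularity to consolidate the finite collection $\{\vec s_{P'} : P' \neq P\}$ into a star $\sigma_P \sub P$ no other profile of $\cP$ extends; thus $\sigma_P \in \cF$. An analogous argument rules out any hypothetical $\cF$-tangle $O$ not coinciding with some $P \in \cP$: the separations witnessing that $O$ differs from each $P \in \cP$ may be assembled via submodularity into a star in $\cF$ contained in $O$, contradicting that $O$ is $\cF$-avoiding.

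With $\cF$-tangles thus ruled out, duality hands us an $\cS$-tree $(T,\alpha)$ over $\cF$, whose image $N := \alpha(\vec E(T))$ is nested by the very definition of an $\cS$-tree. To see that $N$ distinguishes $\cP$, note that each profile $P \in \cP$ induces a consistent orientation of the edges of $T$ and thus points to a unique node $t_P$, where the associated star $\sigma_{t_P}$ is a subset of $P$. Since $\sigma_{t_P}\in\cF$ is contained in at most one profile of $\cP$, distinct profiles $P \neq P'$ must satisfy $t_P \neq t_{P'}$, and any edge on the $t_P$–$t_{P'}$ path in $T$ yields a separation in $N$ distinguishing $P$ from $P'$.

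The main obstacle is twofold. First, one must verify that $\cF$, as defined, satisfies the closure conditions (chiefly closure under shifting of stars) demanded as a hypothesis of the duality theorem; this will be where structural submodularity is invoked most heavily. Second, one must rigorously justify the consolidation step that packages finitely many witness separations into a single star inside a profile (or inside a purported $\cF$-tangle). Both steps are technical but should reduce to careful applications of submodularity, after which the theorem follows from a single invocation of tangle-tree duality.
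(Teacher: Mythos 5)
Your proposal follows the same blueprint the paper uses for its \cref{thm:ToTviaTTD} --- take $\cF$ to be the set of stars included in at most one profile of $\cP$, show no $\cF$-tangle exists, and read off the nested set from the $S$-tree that duality returns. However, there is a gap at exactly the point you flag as ``the main obstacle'': verifying that $\cF$ satisfies the closure/separability hypothesis of the duality theorem. The only tool available for this is \cref{lem:shift} (if a profile includes a shift of a star, it includes that star), and that lemma is stated \emph{only for regular profiles}. The statement you are trying to prove, \cref{thm:Daniel}, makes no regularity assumption on $\cP$. The paper is explicit that this is not a technicality one can wave away: at the end of \cref{sec:finite_merging_submodular} it remarks that ``with the approach shown here it is not easy to strengthen \cref{thm:ToTviaTTD} to the level of \cref{thm:Daniel} by dropping the assumption of regularity, since \cref{lem:shift} cannot do without this regularity.'' Accordingly, the paper only proves the weaker \cref{thm:ToTviaTTD} (regular profiles, nested set) by this route, and quotes \cref{thm:Daniel} as a citation rather than proving it. Your argument, as written, would go through verbatim if you restricted $\cP$ to regular profiles, but it does not establish \cref{thm:Daniel} as stated.

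Two smaller points. First, the image $\alpha(\vec E(T))$ of an $S$-tree over stars is nested only after passing to an \emph{irredundant} subtree (\cref{lem:TreeSets_6.2,lem:TreeSets_6.3}); this should be invoked explicitly. Second, \cref{thm:Daniel} asks for a \emph{tree set}, not merely a nested set; you would still need to argue that no separation with a trivial orientation survives (or can be removed from) $N$.
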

\noindent Profiles are the most general class of objects that one can think of as tangles.

The tangle-tree duality theorem, on the other hand, provides a tree-like dual object to tangles which, if no tangle exists, serves as a witness that there can be no tangle.
In this paper we demonstrate the versatility of the most abstract version of this duality theorem: we deduce \cref{thm:Daniel} and some of its variations from the tangle-tree duality theorem, reducing the two pillars of abstract tangle theory to a single pillar.

In order to use tangle-tree duality to deduce tree-of-tangles theorems like \cref{thm:Daniel},  we exploit the generality of the most abstract version of the tangle-tree duality theorem, which reads as follows:
\begin{restatable}[Tangle-tree duality theorem~\cite{TangleTreeAbstract}*{Theorem 4.3}]{THM}{TTD}\label{thm:duality}
	Let $ U $ be a universe containing a finite separation system $ S\sub U $ and let $ \cF\sub 2^{\vU} $ be a set of stars such that $ \cF $ is standard for $ \vS $ and $ \vS $ is $ \cF $-separable. Then exactly one of the following statements holds:
	\begin{itemize}
		\item there is an $ \cF $-tangle of $ S $;
        \item there is an $ S $-tree over $ \cF $.
	\end{itemize}
\end{restatable}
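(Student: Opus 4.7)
The plan is to prove the two alternatives separately. The \emph{exclusivity} is the easier direction: suppose for contradiction that we have both an $\mathcal{F}$-tangle $\tau$ and an $S$-tree $(T,\alpha)$ over $\mathcal{F}$. Starting at an arbitrary node of $T$ and always crossing an edge whose associated oriented separation points away from the current node according to $\tau$, consistency of $\tau$ ensures we can always continue at an internal node $t$: if every edge at $t$ were oriented towards $t$, the resulting star would lie in $\mathcal{F}\cap\tau$, a contradiction. Since $T$ is finite, the walk must terminate at a leaf $\ell$; but then the star at $\ell$ lies in $\mathcal{F}\cap\tau$, again a contradiction.

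For the \emph{existence} direction I would assume that no $\mathcal{F}$-tangle of $S$ exists and construct an $S$-tree over $\mathcal{F}$ by induction on $|S|$. Fix any $r\in S$ and look at the two orientations $\vec{r}$ and $\overleftarrow{r}$. The idea is that either at least one of $\{\vec{r}\}$ or $\{\overleftarrow{r}\}$ extends to an $\mathcal{F}$-tangle of $S$ (contradicting our assumption), or both extensions are blocked; in the latter case the induction hypothesis should produce an $S$-tree over $\mathcal{F}$ whose root-separation is $\overleftarrow{r}$ together with one whose root-separation is $\vec{r}$, and these two rooted $S$-trees can be glued across a new edge labelled $r$ into an $S$-tree for all of $S$.

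The main obstacle is that the "partial orientations extending $\{\vec{r}\}$" do not themselves form a smaller separation system, so the induction does not apply on the nose. This is bridged by a \emph{shifting lemma}, which upgrades an $S$-tree over $\mathcal{F}$ witnessing that $\{\vec{s}\}$ admits no extension to an $\mathcal{F}$-tangle into one witnessing the same for $\{\vec{r}\}$ whenever $\vec{r}\leq\vec{s}$. The $\mathcal{F}$-separability hypothesis is exactly what furnishes the intermediary separation through which each star of the old tree can be shifted while remaining in $\mathcal{F}$; the star structure of $\mathcal{F}$ makes this shifting structurally meaningful; and standardness of $\mathcal{F}$ ensures that degenerate oriented separations $\vec{s}$ with $\overleftarrow{s}\leq\vec{s}$ are absorbed as leaves. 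The real work lies in establishing this shifting lemma; once it is in hand, the inductive construction of the $S$-tree reduces to bookkeeping and the existence direction follows.
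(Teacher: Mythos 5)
Your exclusivity argument is correct and is the standard one. For the existence direction, however, you take a route unlike the paper's, and your sketch has a gap. The paper does not prove \cref{thm:duality} by induction on $|S|$; it proves the strengthening \cref{thm:modifiedTTD}, whose proof works contrapositively in the opposite direction to yours. It assumes there is \emph{no} $S$-tree over $\cF$, collects the set $\vL$ of ``petals'' — leaf labels $\vr$ with $\{\vr\}\notin\cF$ of $S$-trees whose every \emph{internal} node carries a star from $\cF$ (``$S$-tree attempts'') — and then extracts a minimal down-closed $P\sub\vL$ that still meets every such attempt. It shows $P$ is consistent and antisymmetric, hence an $\cF$-tangle; the shifting machinery (\cref{lem:shift_S-tree}) enters only to derive a contradiction from a putative inconsistent pair in $P$, by shifting the two $S$-tree attempts whose unique petal in $P$ is $\vr$ resp.\ $\vs$ and gluing them into an attempt with no petal in $P$. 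There is no induction on $|S|$ at all.

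Your proposal is instead an outline of the original Diestel--Oum approach, but the step where ``the induction hypothesis should produce an $S$-tree over $\cF$ whose root-separation is $\rv$'' does not follow from induction on $|S|$: deleting $r$ from $S$ may \emph{create} $\cF$-tangles of the smaller system (fewer separations to orient means fewer chances to include a star from $\cF$), so the induction hypothesis may hand you a tangle rather than a tree, and such a tangle need not extend to one of $S$. You acknowledge that ``the induction does not apply on the nose,'' but the shifting lemma cannot close this gap by itself: it transports a rooted $S$-tree from $\{\vs\}$ to $\{\vr\}$ when $\vr\le\vs$, so it \emph{presupposes} the tree at $\{\vs\}$. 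What is missing is the correct well-founded induction that supplies the initial rooted trees in the first place (roughly, working through $\vS$ from the $\le$-maximal separations downward and maintaining an appropriate invariant), which is where the bulk of the original argument actually lives. Calling it ``bookkeeping'' once the shifting lemma is in hand understates the remaining work considerably.
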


The strength of~\cref{thm:duality} lies in the flexibility it allows in the choice of~$ \cF $. This set~$ \cF $ can be tailored to capture a wide variety of tangles and clusters, allowing~\cref{thm:duality} to be employed in a multitude of different settings (\cite{TangleTreeGraphsMatroids,AbstractTangles}). The freedom in choosing and manipulating~$ \cF $ will also allow us to achieve our goal of deducing tree-of-tangles theorems from \cref{thm:duality}: by a clever choice of~$ \cF $ we can ensure that there is no~$ \cF $-tangle of~$ S $, and that the~$ S $-tree over~$ \cF $ one then obtains will be a tree of tangles. We present multiple variations of this idea throughout this paper.

In terms of simplicity and brevity, reducing the tree-of-tangles theorem to the tangle-tree duality theorem in this way cannot compete with its direct proofs in \cite{ProfilesNew},\cite{AbstractTangles} or \cite{FiniteSplinters}, our general purpose solution to obtaining tree-of-tangles theorems in a wide range of structures. (There, we showed an even more general theorem than \cref{thm:Daniel} which no longer mentions tangles or profiles at all, but just talks about sets of separations fulfilling one simple-to-check condition.)

Instead of competing in terms of simplicity and brevity just for a proof of the tree-of-tangles theorem, the aim of this paper is to bridge the two parts of the theory needed for their classical proofs.
This can be viewed in two ways. Firstly, that we introduce tools from tangle-tree duality into the world of trees of tangles, which gives us a new method for building trees in this context very unlike the proofs in \cites{ProfilesNew,AbstractTangles,FiniteSplinters}.

Secondly, and perhaps more importantly, from the perspective of tangle-tree duality this may be viewed as introducing a new range of ways of how to apply the duality theorem by a careful choice of~$\cF$.
Previous applications of \cref{thm:duality} all worked with largely similar choices of $\cF$, all designed to capture some notion of `width', whereas we specifically construct $\cF$ in such a way that no $\cF$-tangle can exist, thereby making sure that \cref{thm:duality} gives us the dual object which will be the desired tree-of-tangles.

A new result that we get from this method is that it allows us to bound the degrees of the nodes in a tree of tangles in some contexts. Getting such a degree condition out of the original proofs does not appear to be simple.

To achieve our last result, we prove a strengthened version of \cref{thm:duality}, which we present in \cref{subsec:duality} along with a simpler proof than the original one in \cite{TangleTreeAbstract}.

The structure of this paper is as follows. In \cref{sec:defs} we will repeat the required definitions from \cite{ProfilesNew,AbstractTangles,AbstractSepSys,TangleTreeAbstract,TangleTreeGraphsMatroids}. In ~\cref{sec:finite_merging_submodular} we prove our first basic tree-of-tangles theorem, for structurally submodular separation systems. A refined version of this argument will be given in~\cref{sec:finite_merging_degrees}, where we show that the approach via tangle-tree duality yields a bound on the degrees of the nodes in a tree of tangles. In \cref{sec:efficiency} we present a more involved argument to obtain a tree of tangles that  distinguishes a set of profiles \emph{efficiently}. Again, this approach can be used to obtain a result about the degrees in such a tree, and we do so in \cref{sec:efficient_degrees}. 
In \cref{sec:different_order} we prove a tree-of-tangles theorem for tangles of different orders. For that we need our stronger version of the tangle-tree duality theorem, which we state in \cref{subsec:duality}. The proof of this stronger duality theorem in \cref{subsec:duality} also offers a new, and maybe simpler, proof of the original tangle-tree duality theorem \cref{thm:duality}.
In our final section, \cref{subsec:different_order}, we then use this stronger tangle-tree duality theorem to obtain a tree-of-tangles theorem for profiles of different order.

\section{Terminology and background}\label{sec:defs}

Since we combine the theory of tangle-tree duality and of trees of tangles we need the terminology of both. Consequently, this results in a large number of definitions which need to be understood for the comprehension of this paper.
We employ the frameworks of \cites{ProfilesNew,AbstractTangles,AbstractSepSys,TangleTreeAbstract,TangleTreeGraphsMatroids}. 
For reference, we offer a recap of the definitions that we will use, split up according to their context.
The cited sources provide more in-depth motivation of the respective set-ups.

\subsection{Basic definitions of abstract separation systems\texorpdfstring{ (see \cite{AbstractSepSys})}{}}

A separation system $S=(\vS,\leq,^*)$ consists of a finite poset $\vS$ together with an involution $^*$ which is order-reversing, i.e., $\vs \leq \vt \Leftrightarrow \vs^* \geq \vt^*$. We call the elements of $\vS$ \emph{(oriented) separations} and denote, given an oriented separation $\vs\in \vS$, the image of $\vs$ under $^*$ as the \emph{inverse} of $\vs$.

The pair $\{\vs,\sv\}$ of $\vs$ together with its inverse $\sv$ is denoted as $s$ and called the \emph{underlying unoriented separation} of $\vs$. Given $s$, we say that $\vs$ and $\sv$ are the \emph{orientations} of $s$. The set of all the underlying unoriented separations for a set $\vT\subseteq \vS$ is denoted as $T$, so $S$ is the set of all unoriented separations of separations in $\vS$. Conversely, when given a set $T$ of unoriented separations, we denote as $\vT$ the set of all orientations of separations in $T$. For brevity, we mean by the term `separations' both oriented and unoriented separations if the intended meaning is clear from the context.

 We say that $\vs\in \vS$ is \emph{small} if it is less then its inverse, that is if $\vs\le\sv$. If there is an unoriented separation $r\neq s$ such that $\vs\le\vr$ and $\vs\le \rv$, then $\vs$ is called \emph{trivial}. Note that every trivial separation is small, since $\vs\le\rv$ implies that $\sv\ge\vr$ and thus $\vs\le\vr\le\sv$.
 
\pagebreak[4]
The inverse $\sv$ of a small separation $\vs$ is called \emph{cosmall} and likewise the inverse of a trivial separation is called \emph{cotrivial}.
A set of oriented separations is \emph{regular} if it contains no cosmall separation. 

We say that a separation $s$ (and likewise its orientations) is \emph{degenerate} if $\vs=\sv$.

We say that two unoriented separation $r$ and $s$ from $S$ are \emph{nested} if they have orientations $\vr,\vs$ such that $ \vr\le\vs$. Two separations $r$ and $s$ \emph{cross} if they are not nested. Two oriented separations $\vr$ and $\vs$ cross or are nested if the underlying unoriented separations $r$ and $s$ cross or are nested, respectively. Note than in particular $\vr$ and $\vs$ can be nested even if they are incomparable, for instance if $\vr\le\sv$.
A set of separations is called \emph{nested} if its elements are pairwise nested.

A nested set $T$ of unoriented separations is called a \emph{tree set} in a separation system $S$, if $T$ does not contain any separation $s$ which has a trivial orientation in $\vT$.
A tree-set $T$ is \emph{regular} if $\vT$ is regular.

A universe $U=(\vU,\leq,^*,\join,\meet)$ is a separation system $(\vU,\leq,^*)$ together with join and meet operators $\join,\meet$ which turn the poset $(\vU,\leq)$ into a lattice.
For universes DeMorgan's law holds: \[
    (\vs \join \vt)^* = \sv \meet \tv
\]
Given two unoriented separations $s$ and $t$ in $U$, we call the unoriented separations corresponding to $\vs\join \vt,\vs\join\tv,\sv\join\vt$ and $\sv\join\tv$ the \emph{corner separations}, or \emph{corners} for short, of $s$ and $t$.

One often-used property of universes is the so-called \emph{fish lemma}:
\begin{LEM}[\cite{AbstractSepSys}*{Lemma 3.2}]\label{lem:fish}
Let $r,s\in U$ be two crossing separations. Every separation $t$ that is nested with both $r$ and $s$ is also nested with all four corner separations of $r$ and~$s$.
\end{LEM}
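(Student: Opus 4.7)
The plan is to dispose of the claim by a structured case analysis, using the crossing of $r$ and $s$ to eliminate the troublesome subcases before they arise. First, since $t$ is nested with $r$, I may choose orientations so that $\vr\le\vt$; this costs no generality up to relabelling the orientations of $r$ and $t$. With the orientation of $t$ now fixed, the nestedness of $t$ with $s$ leaves four a priori possibilities for the relative orientations of $s$ and $t$: namely $\vs\le\vt$, $\sv\le\vt$, $\vs\le\tv$, or $\sv\le\tv$.

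The key observation is that the last two possibilities are ruled out by the crossing of $r$ and $s$. Indeed, $\vs\le\tv$ combined with $\vr\le\vt$ yields $\vr\le\vt\le\sv$ and hence $\vr\le\sv$, contradicting that $r$ and $s$ are crossing; the case $\sv\le\tv$ is entirely symmetric. Thus only the two cases $\vs\le\vt$ and $\sv\le\vt$ remain, and these are handled by the same pattern (with $\sv$ in place of $\vs$ in the second).

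In the first case I note that $\vr\join\vs\le\vt$, which immediately exhibits $t$ as nested with the corner having $\vr\join\vs$ as an orientation. For the remaining three corners I use, on the one hand, the contrapositive $\tv\le\rv$ obtained from $\vr\le\vt$, which places $\tv$ below both $\rv\join\vs$ and $\rv\join\sv$, and on the other hand the contrapositive $\tv\le\sv$ obtained from $\vs\le\vt$, which places $\tv$ below $\vr\join\sv$. Together these three inequalities witness the required nestedness with each of the remaining corners.

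The only obstacle is mild and purely organisational: with four corners and two surviving cases, one has to track orientations and inverses carefully. The real content is the observation that crossing of $r$ and $s$ rules out the two transitive subcases; once those are gone, everything reduces to a short bookkeeping exercise using the lattice operations in $U$.
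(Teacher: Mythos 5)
Your proof is correct. The paper does not reproduce a proof of this lemma — it cites it from the literature — but your argument is the standard one: fix orientations so that $\vr\le\vt$, use the crossing of $r$ and $s$ to eliminate the two transitive cases $\vs\le\tv$ and $\sv\le\tv$ (since either would force $\vr\le\sv$ or $\vr\le\vs$), and then in the surviving cases read off nestedness with each corner directly from $\vr\join\vs\le\vt$ (resp.\ $\vr\join\sv\le\vt$) together with $\tv\le\rv$ and $\tv\le\sv$ (resp.\ $\tv\le\vs$).
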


Given a separation system $S$, a subset $O\subseteq \vS$ is \emph{antisymmetric} if $|O\cap \{\vs,\sv\}|\le 1$ for every $s\in S$.

An \emph{orientation} of $S$ is an antisymmetric subset $O\subseteq \vS$ such that $\vs\in O$ or $\sv\in O$ for every $s\in S$. Such an orientation $O$ is \emph{consistent} if $O$ does not contain any $\vr$ and $\vs$ such that $ \rv\le\vs $ and $ r\ne s $.

Some universes $U$ come with an \emph{order function}, a function $\abs{\cdot}\colon\vU\to \Nbb_0$ which is invariant under $^*$, that is, for any $s\in U$, we have ${\abs{\vs}=\abs{\sv}\eqqcolon\abs{s}}$.
Such an order function is called \emph{submodular} if, for all $\vs,\vt\in \vU$, \[ \abs{\vs}+\abs{\vt}\ge \abs{\vs\join\vt}+\abs{\vs\meet\vt}\,. \]

A universe $\vU$ together with such a submodular order function is called a \emph{submodular universe}. Given a submodular universe we denote as $\vS_k\subseteq \vU$, for $k\in\N$, the separation system consisting of all separations $\vs\in \vU$ satisfying $\abs{\vs}<k$.

We say that a separation system $\vS$ inside a universe is \emph{structurally submodular} (some literature omits the `structurally') if, for all $\vs,\vt\in \vS$, at least one of $\vs\join \vt$ and $\vs\meet\vt$ also lies in $\vS$.
Note that, if $\vU$ is a submodular universe, then every $\vS_k\subseteq \vU$ is structurally submodular.

\subsection{The tree-of-tangles theorem\texorpdfstring{ (see \cite{ProfilesNew})}{}}
A separation $s$ is said to \emph{distinguish} two orientations $ O_1 $ and $ O_2 $ of two, possibly distinct, separation systems inside $U$, if $s$ has an orientation $\vs$ such that $ \vs\in O_1 $ and $ \sv\in O_2 $.
If $U$ comes with an order function we say that such an $s$ distinguishes $O_1$ and $O_2$ \emph{efficiently} if there is no $r$ with $|r|<|s|$ which distinguishes $O_1$ and $O_2$.

A consistent orientation $O$ of a separation system $S\subseteq U$ inside some universe $U$ is said to be a \emph{profile} if
it satisfies the profile property:
\[ \forall \,\vr,\vs\in P \colon (\rv\meet\sv)\notin P \tag{P}\label[property]{property:P} \]

For a universe $U$ with an order function, a \emph{$k$-profile in $\vU$} is a profile of $S_k \subseteq U$. We say that $P$ is a \emph{profile in $\vU$} if $P$ is a $k$-profile in $\vU$ for some $k$. If $P$ is a $k$-profile in $\vU$, then $k$ is the \emph{order} of $P$.

Such a profile $P$ is \emph{robust} if moreover:
\[\forall \vs\in P,\vt\in \vU:\text{ if }|\vs\join \vt|<|\vs|\text{ and }|\vs\join\tv|<|\vs|, \text{ then either }\vs\join\vt\in P\text{ or }\vs\join\tv\in P\]

The tree-of-tangles theorem for $k$-profiles states the following:

\begin{THM}[\cite{ProfilesNew}*{Corollary 3.7}, modified]\label{thm:tot_hundermark}
Let $(\vU,\le,^*,\join,\meet,\abs{\ })$  be a submodular universe of separations. For every set $\cP$ of pairwise distinguishable robust regular profiles in $\vU$ there is a regular tree set $T = T(\cP)\subseteq \vU$ of separations such that:
\begin{enumerate}
 \item every two profiles in $\cP$ are efficiently distinguished by some separation in $T$;
\item every separation in $T$ efficiently distinguishes a pair of profiles in $\cP$.
\end{enumerate}
\end{THM}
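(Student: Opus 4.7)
My plan is to prove this by induction on $\abs\cP$, following the classical Hundertmark strategy that rests on structural submodularity of each $\vS_k$, the profile property \cref{property:P}, and robustness, rather than on \cref{thm:duality} as pursued elsewhere in this paper.

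For distinct $P,Q\in\cP$ set $k(P,Q)\coloneqq\min\{\,\abs s : s\in U,\ s\text{ distinguishes }P,Q\,\}$; this is a well-defined natural number since profiles in $\cP$ are pairwise distinguishable. Call $s\in U$ an \emph{efficient distinguisher} of $(P,Q)$ when $\abs s=k(P,Q)$ and $s$ distinguishes $P$ from $Q$. The technical heart of the proof is an uncrossing lemma: if $s$ efficiently distinguishes $(P,Q)$ and $t\in U$ crosses $s$, then one of the four corner separations of $s$ and $t$ is itself an efficient distinguisher of $(P,Q)$ and is nested with $t$. To prove it I first use consistency and \cref{property:P} to pin down how $P$ and $Q$ orient each corner, and observe that after swapping $\vt\leftrightarrow\tv$ if necessary the two corners $s\join t$ and $s\meet t^*$ both distinguish $P$ from $Q$ while the other two do not. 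Submodularity of $\abs\cdot$ gives $\abs{s\join t}+\abs{s\meet t^*}\le\abs s+\abs t$, so in the regime $\abs t\le\abs s$ one of them already has order $\le\abs s=k(P,Q)$, giving the efficient corner directly. In the regime $\abs t>\abs s$ submodularity alone is too weak; here robustness of the profile containing $\vs$ is invoked to place one of $\vs\join\vt,\vs\join\tv$ inside $P$ at order $<\abs s$, and \cref{property:P} applied to $Q$ then turns that join into the desired efficient corner.

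With the uncrossing lemma at hand, pick a pair $(P,Q)$ attaining $k^*\coloneqq\min_{P'\ne Q'}k(P',Q')$, take any efficient distinguisher $s_0$ of $(P,Q)$, and apply the lemma iteratively with $t$ ranging through all efficient distinguishers of all other pairs. Minimality of $k^*$ ensures that no step raises the order and finiteness of $\vS_{k^*}$ inside $\vU$ forces the process to terminate in an efficient distinguisher $s$ of $(P,Q)$ nested with every efficient distinguisher of every other pair in $\cP$. The two orientations of $s$ partition $\cP=\cP_1\cup\cP_2$ via $\cP_i\coloneqq\{R\in\cP : \vs_i\in R\}$ with $\vs_1\coloneqq\vs$ and $\vs_2\coloneqq\sv$; inside each $\cP_i$ the values $k(R,R')$ are unchanged, so applying the induction hypothesis supplies regular tree sets $T_1,T_2$ of efficient distinguishers for the pairs within each side. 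The set $T\coloneqq T_1\cup\{s\}\cup T_2$ is the required tree set: $s$ is nested with every element of $T_1\cup T_2$ by construction, the $T_i$ themselves are nested, and $T_1$ and $T_2$ are mutually nested because their separations live on opposite sides of $s$; regularity follows because efficient distinguishers are never trivial.

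The main obstacle I foresee is the uncrossing lemma in the regime $\abs t>\abs s$: submodularity on its own leaves no room for a small-order distinguishing corner, and robustness has to be threaded carefully together with \cref{property:P} so as to end up with a corner simultaneously of the right order and on the correct sides of $P$ and $Q$. A more elegant alternative, in the spirit of this paper, would be to recast the statement via \cref{thm:duality}: pick $K>\max_{P,Q}k(P,Q)$, set $\vS\sub\vS_K$, and design a set $\cF$ of stars so that each profile $P\in\cP$ contains a star in $\cF$ --- ruling out all $\cF$-tangles of $\vS$ --- while every $S$-tree over $\cF$ is automatically built out of efficient distinguishers; the duality theorem would then supply the desired tree set at once, at the cost of a correspondingly delicate choice of $\cF$.
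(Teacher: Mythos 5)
Your proposal takes a direct, classical Hundertmark-style route, which is genuinely different from the paper: the paper does not reprove \cref{thm:tot_hundermark} at all but cites it from \cite{ProfilesNew}, proving instead (in \cref{sec:different_order}, via tangle-tree duality) the variants \cref{thm:different_non,thm:different_eff} under the extra hypotheses of a distributive universe and strongly robust profiles. Judged on its own terms, however, your sketch has two substantive gaps. In the uncrossing lemma, the inequality $\abs{s\join t}+\abs{s\meet t^*}\le\abs s+\abs t$ is not an instance of submodularity: submodularity pairs $s\join t$ with $s\meet t$, and $s\join t^*$ with $s\meet t^*$, so $s\join t$ and $s\meet t^*$ are diagonal corners lying in different pairs. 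Tracing \cref{property:P} and consistency through the four corners shows that the distinguishing pair is the diagonal one precisely when $P$ and $Q$ orient $t$ the same way --- which is exactly the configuration where submodularity alone is silent --- and is a submodularity pair ($s\join t$ with $s\meet t$) only when $P$ and $Q$ orient $t$ oppositely. Even then the bound yields a corner of order at most $\abs s$ only when $\abs t=\abs s$; for $\abs t>\abs s$, which is the generic case since $\abs s=k^*$ is minimal over all pairs, robustness must carry the argument, and you acknowledge but do not supply the threading. That threading is not a detail: it is where the theorem lives.

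The second gap is structural. After splitting $\cP=\cP_1\cup\cP_2$ along $s$ and invoking the induction hypothesis for each $\cP_i$, the resulting $T_1,T_2$ are just tree sets somewhere in $\vU$; the induction hypothesis as stated does not constrain them to be nested with $s$, nor to lie on the correct side of $s$ so that $T_1$ and $T_2$ are mutually nested. The claim that $T_1$ and $T_2$ are mutually nested ``because their separations live on opposite sides of $s$'' therefore has no backing. Repairing this needs either a stronger induction hypothesis producing a tree set nested with a prescribed nested set --- which in turn requires the uncrossing lemma to preserve that additional nestedness, circling back to the first gap --- or a restriction of each $\cP_i$ to the sub-universe of separations nested with, and pointing away from, $s$. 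Your sketch does neither, so the final assembly $T_1\cup\{s\}\cup T_2$ is not shown to be a nested set.
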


Note that the original statement \cite{ProfilesNew}*{Corollary 3.7} included a third property which guaranteed that the resulting set $T$ is invariant under automorphisms. Our methods in this paper will not allow us to guarantee this, that is why we exclude this property from our version of \cite{ProfilesNew}*{Corollary 3.7}. For more discussion of this property, \emph{canonicity}, see \cites{ProfilesNew,CanonicalToT}.

Similarly, we have the tree-of-tangles theorem already mentioned in the introduction for structurally submodular separation systems which do not necessarily come in the form of an $S_k\subseteq U$:
\totDaniel*

\subsection{Tangle-tree duality\texorpdfstring{ (see \cite{TangleTreeAbstract})}{}}

Given some set $\cF$ of subsets of $S$, an \emph{$\cF$-tangle} of $S$ is a consistent orientation of $S$ which includes no subset in $\cF$.
Given a submodular universe $\vU$, we say that $\tau$ is an \emph{$\cF$-tangle in $\vU$} if $\tau$ is an $\cF$-tangle of some $S_k$. 
Observe that profiles are $\cP$-tangles for the set $\cP$ of all `profile triples' $\menge{\vr, \vs, (\vr \join \vs)^*} \subseteq~\vS$.

Often we will consider sets $\cF$ of \emph{stars}: A \emph{star} in $\vS$ is
a set $\sigma \subseteq \vS$ such that $\vs\le \tv$ for all $\vs,\vt\in \sigma$.

We say that a set $\cF$ \emph{forces} a separation $\vs\in \vS$ if $\{\sv\}\in \cF$.

$\cF$ is \emph{standard for $\vS$} if it forces all trivial separations, that is $\cF$ contains all singletons $\{\vs\}$ for cotrivial $\vs\in\vS$.

Given a tree $T$ we denote as $\vE(T)$ the set of orientations of edges of $T$.
This set is equipped with a natural partial order where $\ve \le \vf$ if and only if the unique path in $T$ from the tail of $\ve$ to the head of $\vf$ contains both the head of $\ve$ and the tail of $\vf$. This partial order, together with ${}^*$ the reversal of directed edges, turns $\vE(T)$ into a separation system.

Given a separation system $\vS$, an \emph{$S$-tree} $(T,\alpha)$ is a tree $T$ together with a function $\alpha\colon \vE(T) \to \vS$ which commutes with~$^*$, i.e., $\alpha(\ve) = \alpha(\ev)^*$.
The $S$-tree is \emph{order-respecting} if $\alpha$ preserves the partial order from $\vE(T)$, i.e., $\alpha(\ve) \le \alpha(\vf)$ whenever $\ve \le \vf$.
For $t \in V(t)$ we denote as $\alpha(t)$ the set $ \{\alpha(st) \mid s \in N(t)\}$.
Given some set $\cF$ of subsets of $S$, an $S$-tree $(T,\alpha)$ is \emph{over $\cF$} if $\alpha(t) \in \cF$ for all $t \in V(T)$.

An $S$-tree $(T,\alpha)$ is \emph{irredundant}, if for any node $t\in V(T)$ and distinct neighbours $t',t''\in N(t)$ we have that $\alpha(t',t)\neq \alpha(t'',t)$.

Note that, if $\cF$ is a set of stars then any irredundant $S$-tree over $\cF$ is order-respecting.

Given a separation system $\vS$ inside a universe $\vU$ and $\vr,\vs_0 \in \vS$ with $\vs_0 \ge \vr$ and where $\vr$ is nondegenerate and notrivial in $\vS$, the \emph{shifting map} $\shifting{\vr}{\vs_0}$ is defined by letting, for every $\vs \ge \vr$, \[
    \shifting{\vr}{\vs_0}(\vs) = \vs \join \vs_0 \quad\text{and}\quad \shifting{\vr}{\vs_0}(\sv) = (\vs \join \vs_0)^*.
\]
This map is defined on $\vS_{\ge\vr}\sm \{\rv\}$, where $S_{\ge\vr}$ is the set of all separations $t \in S$ which have an orientation $\vt$ with $\vt \ge \vr$, and $\vS_{\ge\vr}$ is the set of all orientations of separations in~$S_{\ge\vr}$.

For an irredundant $S$-tree $(T,\alpha)$ over some set of stars with $\{\rv\} = \alpha(x)$, for some leaf $x$ of $T$, we write \[
    \alpha_{x,\vs_0} \coloneqq \shifting{\vr}{\vs_0} \circ\; \alpha\,.
\]
The resulting new tree $(T,\alpha_{x,\vs_0})$
is called the \emph{shift of $(T,\alpha)$ from $\vr$ to $\vs_0$} if the leaf $x$ is the only one which has $\alpha(x) = \{\rv\}$.

Given a separation system $\vS$ inside a universe $\vU$ and a star $\sigma \subseteq \vS$ a \emph{shift of $\sigma$ (to some $\vs_0 \in \vS$)} is a star of the form \[
    \sigma^{\vs_0}_\vx \coloneqq \menge{ \vx \join \vs_0 } \cup \menge{ \vy \meet \sv_0 \mid \vy \in \sigma \sm \menge{ \vx }},
\] where $\vx \in \sigma$.
Note that if, for some $\vr\in\vS$, we have $\vx \geq \vr$ then $\sigma^{\vs_0}_\vx$ is the image of $\sigma$ under $\shifting{\vr}{\vs_0}$.

A separation $\vs$ \emph{emulates $\vr$ in $\vS$} if $\vs \ge \vr$ and for every $\vt \in \vS \sm \menge{\rv}$ with $\vt \ge \vr$ we have $\vs \join \vr \in \vS$.
The separation $\vs$ emulates $\vt$ in $\vS$ \emph{for $\cF$} if additionally for every star $\sigma \in \cF$ with $\rv \notin \sigma$ and every $\vx \in \sigma$ with $\vx \ge \vr$ we have $\sigma_{\vx}^{\vs} \in \cF$.

Note that for an irredundant $S$-tree $(T,\alpha)$ over some set of stars $\cF$ with $\{\rv\} = \alpha(x)$, for some leaf $x$ of $T$, the shift from $\vr$ to $\vs_0$ is again an $S$-tree over $\cF$ if $\vs_0$ emulates $\vr$ in $\vS$ for $\cF$.

A separations system $S$ is \emph{separable} if for any two nontrivial nondegenerate separations $\vr_1, \vr_2 \in \vS$ with $\vr_1 \le \rv_2$ there exists a separation $\vs_0\in\vS$, with $\vr_1 \le \vs_0 \le \rv_2$ such that $\vs_0$ emulates $\vr_1$ in $S$ and $\sv_0$ emulates $\vr_2$ in $S$.
The separation system $S$ is \emph{$\cF$-separable} if we can choose, for any two such $\vr_1$ and $\vr_2$ which are nontrivial nondegenerate and not forced by $\cF$, such an $\vs_0$ so that $\vs_0$ emulates $\vr_1$ in $S$ for $\cF$ and $\sv_0$ emulates $\vr_2$ in $S$ for $\cF$.

The abstract tangle-tree duality theorem now states the following:

\TTD*

If, in the following, we speak of \emph{the} duality theorem, we mean~\cref{thm:duality}.

The condition of $\cF$-separability is sometimes split into two parts which, in sum, are stronger:
Firstly, that $S$ is separable and secondly that $\cF$ is \emph{closed under shifting}, that is, every shift $\sigma'$ of a star $\sigma \in\cF$ is also in $\cF$ if $\sigma' \subseteq \vS$.
(Compare~\cite{AbstractTangles}*{Lemma 12}.)

We shall need the following additional lemmas from the literature:

\begin{LEM}[{\cite{TangleTreeAbstract}*{Lemma 2.1}}]\label{lem:TreeSets_6.3}
    Every irredundant $S$-tree $(T,\alpha)$ over stars is order-respecting.
    In particular, $\alpha(\vE(T))$ is a nested set of separations in $\vS$.
\end{LEM}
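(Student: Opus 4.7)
My plan is to establish the order-respecting property by invoking the star property at each internal node along the path between two comparable oriented edges, and then to derive the ``in particular'' nestedness claim as an immediate corollary.

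Given $\ve \le \vf$ in $\vE(T)$, let $P$ be the unique path in $T$ from the tail of $\ve$ to the head of $\vf$. By the definition of $\le$ on $\vE(T)$, the path $P$ contains both the head of $\ve$ and the tail of $\vf$, so its edges, oriented in the direction of traversal, form a sequence $\ve_0, \ve_1, \dots, \ve_k$ with $\ve_0 = \ve$ and $\ve_k = \vf$. For each $i < k$, consecutive edges $\ve_i$ and $\ve_{i+1}$ meet at a common node $t_i$ (the head of $\ve_i$ equals the tail of $\ve_{i+1}$); the other endpoints $u_i$ and $u_{i+1}$ are distinct neighbours of $t_i$ since $P$ does not backtrack. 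Consequently, the star $\alpha(t_i)$ contains both $\alpha(\ve_i) = \alpha(u_i t_i)$ and $\alpha(\ev_{i+1}) = \alpha(u_{i+1} t_i)$. Irredundancy guarantees that these are distinct elements of $\alpha(t_i)$, so the star property yields $\alpha(\ve_i) \le \alpha(\ev_{i+1})^{*} = \alpha(\ve_{i+1})$. Chaining the $k$ resulting inequalities produces $\alpha(\ve) \le \alpha(\vf)$, as required.

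For the ``in particular'' part, pick any two edges $e, f \in E(T)$ and orient them both in the direction of the unique $e$--$f$ path in $T$. Then $\ve \le \vf$ in $\vE(T)$ directly from the definition of the partial order, so the first part of the lemma gives $\alpha(\ve) \le \alpha(\vf)$, exhibiting $\alpha(e)$ and $\alpha(f)$ as nested separations in $\vS$. Since $e$ and $f$ were arbitrary, the whole set $\alpha(\vE(T))$ is nested. I do not expect any real obstacle beyond careful bookkeeping with path orientations; the one point that requires attention is ensuring that the star property is being applied to two \emph{distinct} elements of $\alpha(t_i)$, which is precisely where the irredundancy hypothesis enters.
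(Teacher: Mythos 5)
Your proof is correct, and it is exactly the standard argument: chain the star inequalities $\alpha(\ve_i)\le\alpha(\ev_{i+1})^*=\alpha(\ve_{i+1})$ along the internal nodes of the path, with irredundancy supplying the distinctness needed to invoke the star property, and then derive nestedness by choosing, for any two edges, orientations pointing along the path between them. The paper does not reprove this lemma — it is quoted from the cited source — but your argument is the one given there, so there is nothing further to add.
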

\begin{LEM}[{\cite{TangleTreeAbstract}*{Lemma 2.2}}]\label{lem:TreeSets_6.4}
    Let $(T,\alpha)$ be an irredundant $S$-tree over a set $\cF$ of stars.
    Let $e$, $f$ be distinct edges of $T$ with orientations $\ve < \vf$ such that $\alpha(\ve) = \alpha(\fv) \eqqcolon \vr$.
    Then $\vr$ is trivial.

    In particular, $T$ cannot have distinct leaves associated with the same star $\menge{\rv}$ unless $\rv$ is trivial.
\end{LEM}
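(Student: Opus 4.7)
The plan is to exploit the order-respecting property of $(T,\alpha)$ and analyse the path in $T$ between $e$ and $f$. Since $(T,\alpha)$ is irredundant over stars, \cref{lem:TreeSets_6.3} ensures that $\alpha$ preserves the partial order on $\vE(T)$. Write the path from the tail of $\ve$ to the head of $\vf$ as traversing the edges $e_0 = e, e_1, \dots, e_N = f$, and let $\ve_i$ denote the orientation of $e_i$ in the direction of this path. Consecutive edges satisfy $\ve_i \le \ve_{i+1}$ in $\vE(T)$, so $\alpha$ produces the chain
\[
    \vr \;=\; \alpha(\ve_0) \;\le\; \alpha(\ve_1) \;\le\; \cdots \;\le\; \alpha(\ve_N) \;=\; \alpha(\vf) \;=\; \rv.
\]

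Set $\vr_i \coloneqq \alpha(\ve_i)$. The chain gives $\vr \le \vr_i \le \rv$, and taking the inverse of the second inequality yields $\vr \le \rv_i$ as well. If for some $i$ the underlying separation $r_i$ differs from $r$, then $r_i$ witnesses that $\vr$ is trivial and we are done. Otherwise every $\vr_i$ lies in $\menge{\vr,\rv}$, and since $\vr_0 = \vr$ while $\vr_N = \rv$, there must be an index $j$ with $\vr_{j-1} = \vr$ and $\vr_j = \rv$. At the common vertex $v$ of $e_{j-1}$ and $e_j$, which is the head of $\ve_{j-1}$ and the tail of $\ve_j$, both $\ve_{j-1}$ and $\ev_j$ are oriented towards $v$, and both are mapped by $\alpha$ to $\vr$. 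Since $e_{j-1} \ne e_j$, this contradicts irredundancy, so this case cannot occur and $\vr$ must be trivial.

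The ``in particular'' assertion reduces to the main statement: if $x_1$ and $x_2$ are distinct leaves with $\alpha(x_i) = \menge{\rv}$, let $e_i$ be the unique edge at $x_i$ and let $\ve_i$ be its orientation pointing away from $x_i$, so that $\alpha(\ve_i) = \vr$. The path from $x_1$ to $x_2$ in $T$ then yields $\ve_1 \le \ev_2$ together with $\alpha(\ve_1) = \vr = \alpha((\ev_2)^*)$, and the main part applies.

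The main obstacle is the exceptional case in which every $\vr_i$ happens to lie in $\menge{\vr, \rv}$; dealing with it requires invoking irredundancy exactly at the vertex where the $\alpha$-values switch from $\vr$ to $\rv$, where two distinct edges would otherwise be forced to carry the same inward $\alpha$-value.
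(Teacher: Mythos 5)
Your proof is correct, and it follows the same route as the cited proof in~\cite{TangleTreeAbstract}: use \cref{lem:TreeSets_6.3} to get the order-respecting chain $\vr = \alpha(\ve_0)\le\cdots\le\alpha(\ve_N)=\rv$ along the path from $\ve$ to $\vf$, then split into the case that some intermediate label has a different underlying separation (which directly witnesses triviality of $\vr$) and the case that all labels lie in $\{\vr,\rv\}$, which you correctly rule out by applying irredundancy at the vertex where the label switches from $\vr$ to $\rv$. The deduction of the leaf statement from the main one is also the standard one.
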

\begin{LEM}[{\cite{TangleTreeAbstract}*{Lemma 2.3}}]
\label{lem:TreeSets_6.2}
    If $(T,\alpha)$ is an $S$-tree over $\cF$, possibly redundant, then $T$ has a subtree $T'$ such that $(T',\alpha')$ is an irredundant $S$-tree over $\cF$, where $\alpha'$ is the restriction of $\alpha$ to $\vE(T')$.
    If $(T,\alpha)$ is rooted at a leaf $x$ and $T$ has an edge, then $T'$ can be chosen so as to contain $x$ and $e_x$, the edge incident to $x$ in $T$.
\end{LEM}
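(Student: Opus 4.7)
The plan is to argue by induction on $|V(T)|$. If $(T,\alpha)$ is already irredundant, I set $T' \coloneqq T$. Otherwise, I pick a redundancy: a node $t$ with distinct neighbours $t', t''$ satisfying $\alpha(t't) = \alpha(t''t)$. Let $T_1$ be obtained from $T$ by removing the edge $tt''$ together with the component of $T - tt''$ containing $t''$, and set $\alpha_1 \coloneqq \alpha\restricts \vE(T_1)$. I would then verify that $(T_1, \alpha_1)$ is still an $S$-tree over $\cF$: the only node whose star could change is $t$, but as a set we have $\alpha_1(t) = \alpha(t)$, because the value $\alpha(t''t)$---no longer contributed via the removed edge $tt''$---still appears via the retained edge $t't$ by our choice of $t''$. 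Every other remaining node carries an unchanged star in $\cF$. Applying the induction hypothesis to $(T_1, \alpha_1)$ then yields the desired irredundant subtree $(T',\alpha')$ over $\cF$.

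For the strengthened second assertion, suppose $(T,\alpha)$ is rooted at a leaf $x$ and $T$ has an edge. I would refine the pruning step so as to never delete the side containing $x$. At a redundancy at node $t$, there are two candidate cuts: one removes the component $T_{t'}$ of $T - tt'$ containing $t'$, and the other removes the component $T_{t''}$ of $T - tt''$ containing $t''$. These two subtrees are disjoint: if some vertex $v$ lay in both, then the unique $T$-path from $v$ to $t$ would have to end in the edge $tt'$ (since $v \in T_{t'}$) and simultaneously in the edge $tt''$ (since $v \in T_{t''}$)---impossible. Hence $x$ lies in at most one of $T_{t'}, T_{t''}$, and since $\alpha(t't) = \alpha(t''t)$ makes the two cuts equally valid, I can always cut on the side not containing $x$. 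The leaf $x$ itself never plays the role of the redundancy node $t$, since it has only one neighbour, and the component retained at each step contains $x$ together with its unique neighbour, hence also the edge $e_x$. By induction, the final irredundant subtree $T'$ contains both $x$ and $e_x$.

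The main point requiring care is this last disjointness argument, which makes the choice of cut well-defined in the rooted setting; the rest of the proof is routine bookkeeping and the straightforward induction on $|V(T)|$.
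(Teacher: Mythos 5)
Your proof is correct, and since this paper does not reprove \cref{lem:TreeSets_6.2} (it is quoted verbatim from \cite{TangleTreeAbstract}) the only sensible comparison is with the standard pruning argument that the source uses, which is essentially what you give: at a redundancy $\alpha(t't)=\alpha(t''t)$, delete the branch beyond one of the two repeated edges, observing that $\alpha(t)$ is unchanged as a set and all other retained stars are untouched, and recurse. Your disjointness observation about $T_{t'}$ and $T_{t''}$ is exactly the right way to make the rooted refinement watertight, and the remark that $x$, being a leaf, can never be the branching node $t$ (so that $e_x$ survives every cut) correctly closes the argument.
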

\begin{LEM}[{\cite{TangleTreeAbstract}*{Lemma 2.4}}]
\label{lem:TreeSets_6.5}
    Let $(T,\alpha)$ be an $S$-tree over a set $\cF$ of stars, rooted at a leaf $x$.
    Assume that $T$ has an edge, and that $\vr = \alpha(\ve_x)$ is nontrivial.
    Then $T$ has a minor $T'$ containing $x$ and $e_x$ such that $(T', \alpha')$, where $\alpha' = \alpha \restricts \vE(T')$, is a tight and irredundant $S$-tree over $\cF$.

    For every such $(T',\alpha')$ the edge $\ve_x$ is the only edge $\ve \in \vE(T')$ with $\alpha(\ve) = \vr$.
\end{LEM}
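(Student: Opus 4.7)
The plan is to build the minor $(T', \alpha')$ by a two-stage reduction and then establish the uniqueness of $\ve_x$ by a case split on the orientation of any hypothetical second $\vr$-edge.

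For the existence of $(T', \alpha')$, I would first apply \cref{lem:TreeSets_6.2} to $(T,\alpha)$ to obtain an irredundant subtree $T_0 \subseteq T$ containing $x$ and $e_x$, with $\alpha\restricts\vE(T_0)$ still an $S$-tree over $\cF$. To achieve tightness I would then iteratively contract edges of the current tree, never touching $e_x$: while some edge admits a contraction whose merged star still lies in $\cF$, perform it, then re-apply \cref{lem:TreeSets_6.2} (preserving $x$ and $e_x$) to restore irredundancy if necessary. Since each contraction strictly reduces the edge count, the procedure terminates with the desired tight and irredundant $S$-tree $(T', \alpha')$.

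For uniqueness, fix any tight and irredundant minor $(T', \alpha')$ as above and assume for contradiction that some edge $e \neq e_x$ has an orientation $\ve$ with $\alpha(\ve) = \vr$. Orient $\ve_x$ pointing from $x$ into $T'$, let $u', v'$ be the endpoints of $e$ with $u'$ closer to $x$, and distinguish cases by which orientation of $e$ carries the label $\vr$. If $\alpha(v', u') = \vr$, set $\vf_1 := \ve_x$ and $\vf_2 := (u', v')$: the unique path from $x$ to $v'$ contains the head of $\vf_1$ and the tail of $\vf_2$, so $\vf_1 < \vf_2$, and moreover $\alpha(\vf_1) = \vr = \alpha(\vf_2)^* = \alpha(\vf_2^*)$, so \cref{lem:TreeSets_6.4} forces $\vr$ to be trivial, contradicting our hypothesis.

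The remaining case $\alpha(u', v') = \vr$ is where I expect the main difficulty to lie. Here order-respectingness (\cref{lem:TreeSets_6.3}) applied along the path $x = x_0, x_1, \dotsc, x_k = u', v'$ forces every edge on the path, oriented away from $x$, to be labelled exactly by $\vr$; consequently each interior star $\alpha(x_i)$ for $1 \le i \le k$ contains both $\vr$ (from the edge to $x_{i-1}$) and $\rv$ (from the edge to $x_{i+1}$). To rule this case out I plan to exploit tightness: the duplicated $(\vr, \rv)$ pattern along the path should allow one of the path-edges, such as $e_1 = \{x_1, x_2\}$, to be contracted while still yielding a star in $\cF$, since the $\rv$ removed from $\alpha(x_1)$ and the $\vr$ removed from $\alpha(x_2)$ are precisely the entries that the complementary endpoint already contributes back to the merged star, contradicting the tightness of $(T', \alpha')$.
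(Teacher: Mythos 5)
Your case split for uniqueness is the right idea, and Case 1 is handled correctly via \cref{lem:TreeSets_6.4}. Case 2, however, has a gap. You correctly observe that order-respectingness (\cref{lem:TreeSets_6.3}) forces every edge on the $x$--$u'$ path, oriented away from $x$, to carry the label $\vr$, so that each interior star $\alpha(x_i)$ contains both $\vr$ and $\rv$. But the contraction you then propose does not obviously produce a star in $\cF$: after contracting $\{x_1,x_2\}$ the merged label set is essentially $\alpha(x_1)\cup\alpha(x_2)$ with both $\vr$ and $\rv$ still present, and nothing in your sketch forces this set to lie in $\cF$, so tightness is not actually refuted. In fact Case 2 cannot occur for a simpler reason that needs no tightness at all: every element of a star is small (take $\vs=\vt$ in the defining condition $\vs\le\tv$), so $\vr,\rv\in\alpha(x_1)$ gives $\vr\le\rv$ and $\rv\le\vr$, hence $\vr=\rv$; but then the two distinct neighbours $x_0$ and $x_2$ of $x_1$ contribute the same label to $\alpha(x_1)$, contradicting irredundancy of $(T',\alpha')$.

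A secondary caution concerns the existence step: you implicitly read ``tight'' as ``no edge can be contracted while the merged star remains in $\cF$''. The present paper never defines tightness (it imports the term, like the lemma itself, from \cite{TangleTreeAbstract}), so that reading needs to be checked against the source before the contraction-and-reclean loop, and its termination and preservation of $x$ and $e_x$, can be taken for granted.
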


\begin{LEM}[\cite{AbstractTangles}*{Lemma 13}]\label{lem:separability}
Let $\vU$ be a universe of separations and $\vS\subseteq \vU$ a structurally submodular separation system. Then $\vS$ is separable.
\end{LEM}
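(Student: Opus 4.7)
Let $\vr_1 \le \rv_2$ be nontrivial and nondegenerate in $\vS$. The plan is to select $\vs_0$ by an extremal condition inside the finite interval $X = \{\vs \in \vS : \vr_1 \le \vs \le \rv_2\}$ (which contains $\vr_1$ and $\rv_2$, hence is non-empty) and then to verify both emulation requirements using only structural submodularity.

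First I would consider $M = \{\vs \in X : \vs \text{ emulates } \vr_1 \text{ in } \vS\}$. This set is non-empty because $\vr_1 \in M$: for every $\vt \in \vS$ with $\vt \ge \vr_1$ we have $\vr_1 \join \vt = \vt \in \vS$. I then choose $\vs_0 \in M$ that is maximal under $\le$. By construction, the first emulation condition, that $\vs_0$ emulates $\vr_1$, is immediate.

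For the second condition, rewriting $\sv_0$ emulates $\vr_2$ through the involution, it suffices to show that for every $\vu \in \vS \sm \{\vr_2\}$ with $\vu \le \rv_2$ the meet $\vs_0 \meet \vu$ lies in $\vS$. Given such a $\vu$, structural submodularity places $\vs_0 \join \vu$ or $\vs_0 \meet \vu$ in $\vS$. If the meet is already in $\vS$ we are done; otherwise only $\vs_0 \join \vu \in \vS$, and since $\vs_0 \join \vu \ge \vs_0 \ge \vr_1$ and $\vs_0 \join \vu \le \rv_2$, this element lies in $X$. The strategy is then to derive a contradiction from the maximality of $\vs_0$ in $M$ by showing $\vs_0 \join \vu \in M$, which would force $\vs_0 \join \vu = \vs_0$, hence $\vu \le \vs_0$, and therefore $\vs_0 \meet \vu = \vu \in \vS$ against our assumption.

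The hard part will be the verification that $\vs_0 \join \vu \in M$. For every $\vt \in \vS \sm \{\rv_1\}$ with $\vt \ge \vr_1$ one needs $\vs_0 \join \vu \join \vt \in \vS$. Since $\vs_0 \in M$ we already know $\vs_0 \join \vt \in \vS$, so applying structural submodularity to $\vs_0 \join \vt$ and $\vu$ yields either the wanted join or the meet $(\vs_0 \join \vt) \meet \vu \in \vS$. Extracting the join from the latter case is the delicate step and I expect it to require chaining several applications of structural submodularity among $\vs_0,\vt,\vu$, or appealing to the fish lemma~\cref{lem:fish} when the relevant pairs cross. Should the plain maximality choice prove too fragile, the fallback is to pick $\vs_0$ in $M$ by a refined lexicographic criterion that simultaneously optimises the $\rv_2$-side of the emulation, or to argue by induction on $|X|$: whenever a $\vu$ witnesses the failure of the dual condition, the element $\vs_0 \join \vu$ is a strictly larger candidate in $X$, so the process must terminate at a separation satisfying both requirements.
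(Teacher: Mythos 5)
The paper does not actually prove this lemma: it cites it from \cite{AbstractTangles}*{Lemma 13} and uses it as a black box, so there is no in-paper proof to compare against. Judged on its own, your proposal is not yet a proof, and the hole is exactly where you suspect it is. Choosing $\vs_0$ to be $\le$-maximal in $M=\{\vs\in X:\vs\text{ emulates }\vr_1\}$ is a sensible extremal set-up, and your reduction of the $\vr_2$-side to ``$\vs_0\meet\vu\in\vS$ for all $\vu\le\rv_2$, $\vu\neq\vr_2$'' is correct. But the whole argument rests on the claim that $\vs_0\join\vu$ again emulates $\vr_1$, and this is precisely what you have not shown. A single application of structural submodularity to $\vs_0\join\vt$ and $\vu$ leaves the case $(\vs_0\join\vt)\meet\vu\in\vS$ open, and I do not see how either chained uncrossings or the fish lemma (\cref{lem:fish}) resolve it: the fish lemma is about nestedness, not about membership of corner separations in $\vS$, and there is no order function here to compare corners by, so the splice argument from \cref{lem:splice_shift} that proves the analogous statement for $\vS_k$ is unavailable.

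More seriously, the fallback you sketch is circular. You say ``whenever a $\vu$ witnesses the failure of the dual condition, the element $\vs_0\join\vu$ is a strictly larger candidate in $X$, so the process must terminate.'' For the iteration to terminate \emph{at an element of $M$} you need each replacement $\vs_0\join\vu$ to still lie in $M$, which is exactly the unproven claim; if you only iterate within $X$, you end at a maximal element of $X$, for which the $\vr_2$-side is easy to check but the $\vr_1$-side fails in general. The two unilateral facts one can prove cheaply — a $\le$-minimal element of $X$ emulates $\vr_1$, a $\le$-maximal element's inverse emulates $\vr_2$ — pull in opposite directions, and reconciling them is the actual content of \cite{AbstractTangles}*{Lemma 13}. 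The ``refined lexicographic criterion'' you mention is the right kind of idea (the published proof does use a more careful extremal choice than plain maximality in $M$), but as written it is a placeholder, not an argument. As it stands the proposal identifies the correct shape of the proof but leaves its central step unjustified.
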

Moreover, we shall need a variant of \cite{TangleTreeAbstract}*{Lemma 4.2} which follows with the exact same proof:
\begin{LEM}[\cite{TangleTreeAbstract}]\label{lem:shift_S-tree}
	Let~$ \cF\sub 2^\vU $ be a set of stars. Let~$ (T,\alpha) $ be a tight and irredundant~$ S $-tree with at least one edge, over some set of stars, and rooted at a leaf~$ x $. Assume that~$ \vr\coloneqq\alpha(\ve_x) $ is nontrivial and nondegenerate, let~$ \vs_0\in\vS $ emulate~$ \vr $ in~$ \vS $ for~$ \cF $, and consider~$ \alpha'\coloneqq\alpha_{x,\vs_0} $. Then~$ (T,\alpha') $ is an order-respecting~$ S $-tree in which~$ \menge{\sv_0} $ is a star associated with~$ x $ but with no other leaf of~$ T $. Moreover~$ \alpha'(t)\in\cF $ for all~$ t\ne x $ with~$ \alpha(t)\in\cF $.
\end{LEM}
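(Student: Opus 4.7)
The plan is to verify each claim of the lemma in turn, with the tightness of $(T,\alpha)$ as the decisive ingredient. By \cref{lem:TreeSets_6.5} tightness means that $\ve_x$ is the unique oriented edge of $T$ with $\alpha$-value $\vr$, and by \cref{lem:TreeSets_6.3} $\alpha$ is order-respecting. Hence every $\vf\in\vE(T)$ pointing away from the root $x$ satisfies $\alpha(\vf)\ge\vr$, and also $\alpha(\vf)\ne\rv$, since otherwise $\alpha(\fv)=\vr$ would force $\fv=\ve_x$, contradicting that $\ve_x$ points outward from $x$. Thus $\alpha(\vf)$ lies in the domain $\vS_{\ge\vr}\sm\{\rv\}$ of $\shifting{\vr}{\vs_0}$, and the emulation hypothesis makes $\alpha'(\vf)=\alpha(\vf)\vee\vs_0$ a separation in $\vS$. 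Extending $\alpha'$ to inward orientations by the involution is consistent with the definition of the shifting map, so $(T,\alpha')$ is an $S$-tree, and it is order-respecting because $\alpha$ is and $\vs\mapsto\vs\vee\vs_0$ is monotone.

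The identity $\alpha'(x)=\{\sv_0\}$ is immediate from $\alpha'(\ev_x)=(\vr\vee\vs_0)^*=\sv_0$. Conversely, suppose some other leaf $y\ne x$ had $\alpha'(y)=\{\sv_0\}$, and let $\vf_y$ be the orientation of its incident edge toward $y$. Since $\vf_y$ points away from $x$, the first paragraph gives $\alpha(\vf_y)\vee\vs_0=\sv_0$, whence $\sv_0\ge\alpha(\vf_y)\ge\vr$ and $\sv_0\ge\vs_0$. Together with $\vs_0\ge\vr$ from emulation, both orientations of $s_0$ lie above $\vr$; if $r\ne s_0$, this exhibits $\vr$ as trivial, contradicting the hypothesis. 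The subcase $s_0=r$ splits further: $\vs_0=\vr$ forces $\alpha(\vf_y)=\sv_0=\rv$, so by tightness $\vf_y=\ev_x$ and $y=x$; while $\vs_0=\rv$ makes $\vr$ simultaneously small and cosmall, contradicting nondegeneracy.

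Finally, let $t\ne x$ be an internal node with $\sigma:=\alpha(t)\in\cF$, and let $\vy_t$ be the orientation toward $t$ of the unique edge on the $x$-side of $t$. Then $\vy_t\ge\ve_x$ in the tree order, so $\vy:=\alpha(\vy_t)\ge\vr$; every other orientation $\vz'$ toward $t$ satisfies $\vz'\le\ev_x$ and hence $\alpha(\vz')\le\rv$. The first paragraph also implies $\rv\notin\sigma$, since otherwise some edge with head $t\ne x$ would take $\alpha$-value $\rv$. Under the shift, $\vy$ becomes $\vy\vee\vs_0$ and every other $\vz\in\sigma$ becomes $\vz\wedge\sv_0$, so $\alpha'(t)=\sigma^{\vs_0}_{\vy}$. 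The emulation-for-$\cF$ hypothesis applied at $\vy$ now yields $\sigma^{\vs_0}_{\vy}\in\cF$, as required. The main obstacle throughout is the careful bookkeeping of edge orientations with respect to the root, most delicately in ruling out the degenerate subcases of the second paragraph.
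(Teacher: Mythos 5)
The paper does not reprove this lemma; it cites \cite{TangleTreeAbstract}*{Lemma 4.2} and observes that the same proof goes through under the weaker hypothesis. Your proof is a correct re-derivation of exactly that argument: order-respecting via the tree order and emulation, the star at $x$ and its uniqueness via the tightness conclusion of \cref{lem:TreeSets_6.5}, and the preservation of $\cF$-membership via emulation-for-$\cF$. One small inaccuracy: in the last step you restrict to \emph{internal} nodes $t\ne x$, but the lemma asserts $\alpha'(t)\in\cF$ for \emph{all} $t\ne x$ with $\alpha(t)\in\cF$, including leaves; the leaf case is however covered by the identical argument, since for a leaf $t\ne x$ one has $\alpha(t)=\{\vp\}$ with $\vp=\alpha(\vy_t)\geq\vr$ and $\vp\neq\rv$, and emulation-for-$\cF$ applied to the singleton star gives $\alpha'(t)=\{\vp\vee\vs_0\}\in\cF$.
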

The only difference in the statement between \cref{lem:shift_S-tree} and \cite{TangleTreeAbstract}*{Lemma 4.2} is that \cite{TangleTreeAbstract}*{Lemma 4.2} requires that $(T,\alpha)$ is an $S$-tree \emph{over $\cF$}, whereas we only require $(T,\alpha)$ to be an $S$-tree \emph{over some set of stars}. Consequently, in \cite{TangleTreeAbstract}*{Lemma 4.2} it is shown that then $ (T,\alpha') $ is an $S$-tree over $\cF\cup \{\{\sv_0\}\}$ whereas we only conclude that $\alpha'(t)\in \cF$ whenever $\alpha(t)\in \cF$.

\subsection{Splices in submodular universes}
In addition to the existing terminology, we shall need the following new concept, which has already been considered in \cites{TangleTreeGraphsMatroids}, but has not been given a name there:
In a submodular universe $\vU$ a separation $\vs$ is a \emph{splice for} a separation $\vr$ with $\vr \leq \vs$ if there is no separation $\vt$ with $\vr \leq \vt \leq \vs$ and $|t| < |s|$.
A \emph{splice between} two separations $\vr$ and $\vs$ with $\vr \leq \vs$ is one of minimum order among all $\vt$ with $\vr \leq \vt \leq \vs$.

These splices are good choices for proving separability due to the next lemma. It follows directly from the proof of Lemma 3.4 of \cite{TangleTreeGraphsMatroids} which, phrased in our terminology, considers a splice between two separations. We recapitulate the main argument of this proof below.

\begin{LEM}[\cite{TangleTreeGraphsMatroids}]\label{lem:splice_shift}
    Consider $\vS_k \subseteq \vU$ in a submodular universe.
  If $\vs \in \vS_k$ is a splice for $\vr \in \vS_k$ then, for every $\vt \in \vU$ with $\vt \geq \vr$,
  the order of $\vt \join \vs$ is at most the order of $\vt$.
  In particular, $\vs$ emulates $\vr$ in $\vS_k$.
\end{LEM}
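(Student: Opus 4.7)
The plan is to derive the inequality directly from submodularity of the order function, using the splice condition on the meet side. Given $\vt \geq \vr$ in $\vU$ and $\vs$ a splice for $\vr$, I would apply the submodular inequality
\[ \abs{\vt} + \abs{\vs} \ge \abs{\vt \join \vs} + \abs{\vt \meet \vs}, \]
so the task reduces to showing $\abs{\vt \meet \vs} \ge \abs{\vs}$.

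Next I would observe that $\vt \meet \vs$ lies between $\vr$ and $\vs$: since $\vr \le \vt$ and $\vr \le \vs$ by hypothesis, the defining property of the meet yields $\vr \le \vt \meet \vs$, and of course $\vt \meet \vs \le \vs$. Hence the splice condition on $\vs$, which rules out any separation strictly between $\vr$ and $\vs$ of order less than $\abs{\vs}$, forces $\abs{\vt \meet \vs} \ge \abs{\vs}$. Plugging this into the submodular inequality gives $\abs{\vt \join \vs} \le \abs{\vt}$, as claimed.

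For the ``in particular'' part, I would unpack the definition: $\vs$ emulates $\vr$ in $\vS_k$ means $\vs \ge \vr$ and $\vs \join \vt \in \vS_k$ for every $\vt \in \vS_k \sm \menge{\rv}$ with $\vt \ge \vr$. The first is part of the hypothesis that $\vs$ is a splice for $\vr$. For the second, any such $\vt$ has $\abs{\vt} < k$, so the first part of the lemma gives $\abs{\vs \join \vt} \le \abs{\vt} < k$, i.e. $\vs \join \vt \in \vS_k$.

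There is no genuine obstacle here: the whole argument is a one-line application of submodularity together with the observation that $\vt \meet \vs$ is automatically trapped between $\vr$ and $\vs$. The only thing worth checking carefully is this trapping, which depends on both $\vt$ and $\vs$ lying above $\vr$; the remainder is purely formal.
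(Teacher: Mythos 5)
Your proof is correct and follows essentially the same route as the paper's: submodularity reduces the claim to showing $\abs{\vt\meet\vs}\ge\abs{\vs}$, which follows from the splice condition once $\vt\meet\vs$ is trapped between $\vr$ and $\vs$. One small improvement: the paper's proof sketch invokes the fish lemma to get $\vr\le\vt\meet\vs\le\vs$, but as you observe, this is unnecessary — since $\vr\le\vt$ and $\vr\le\vs$, the inequality $\vr\le\vt\meet\vs$ is immediate from the meet being a greatest lower bound, so your version is slightly cleaner.
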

\begin{proof}[Proof sketch, see \cite{TangleTreeGraphsMatroids}*{Lemma 3.4}]
 If the order of $\vt\join \vs$ were greater then the order of $\vt$ then, by submodularity, the order of $\vt\meet\vs$ would be less than the order of $\vs$. However, by the fish \cref{lem:fish}, $\vr\le\vt\meet\vs\le\vs$ and this contradicts the fact that $\vs$ is a splice for~$\vr$.
\end{proof}

This lemma then directly implies the ultimate statement of \cite{TangleTreeGraphsMatroids}*{Lemma 3.4}:

\begin{LEM}[\cite{TangleTreeGraphsMatroids}*{Lemma 3.4}]\label{lem:Sk_separable}
  Every $\vS_k \subseteq \vU$ in a submodular universe is separable.
\end{LEM}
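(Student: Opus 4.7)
The plan is to use the splice construction just introduced. Given nontrivial, nondegenerate $\vr_1, \vr_2 \in \vS_k$ with $\vr_1 \le \rv_2$, I would choose $\vs_0$ to be a splice between $\vr_1$ and $\rv_2$, i.e.\ a separation of minimum order among all $\vt \in \vU$ with $\vr_1 \le \vt \le \rv_2$. Such an $\vs_0$ exists because the order function takes values in $\Nbb_0$ and the interval is nonempty, containing $\vr_1$ itself. Minimality forces $|\vs_0| \le |\vr_1| < k$, so $\vs_0 \in \vS_k$, and by construction $\vr_1 \le \vs_0 \le \rv_2$.

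The key step is to observe that $\vs_0$ is automatically a splice for $\vr_1$: any $\vt$ with $\vr_1 \le \vt \le \vs_0$ and $|t| < |\vs_0|$ would also lie in the interval between $\vr_1$ and $\rv_2$, contradicting minimality. Applying \cref{lem:splice_shift} therefore gives that $\vs_0$ emulates $\vr_1$ in $\vS_k$. For the dual requirement, inverting yields $\vr_2 \le \sv_0 \le \rv_1$, and a symmetric argument shows that $\sv_0$ is a splice for $\vr_2$: any hypothetical $\vt$ with $\vr_2 \le \vt \le \sv_0$ and $|t| < |\vs_0|$ would invert to a $\tv$ sitting between $\vr_1$ and $\rv_2$ of strictly smaller order, once more contradicting the minimality of $\vs_0$. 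A second application of \cref{lem:splice_shift} then yields that $\sv_0$ emulates $\vr_2$ in $\vS_k$, which is precisely the separability condition.

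I expect no substantive obstacle here: all the real work has been absorbed into \cref{lem:splice_shift}, and the present lemma merely packages it by producing a concrete $\vs_0$ via the splice-between construction. The only minor point needing verification is the symmetry observation that inverting a splice-between preserves the splice-for property at the opposite endpoint, which is immediate from the definition.
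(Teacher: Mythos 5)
Your proof is correct and takes essentially the same route as the paper: the paper states that \cref{lem:Sk_separable} follows "directly" from \cref{lem:splice_shift}, with the implicit step being exactly yours — take a splice between $\vr_1$ and $\rv_2$, observe that it (and its inverse) is then a splice for $\vr_1$ (resp.\ $\vr_2$), and apply \cref{lem:splice_shift} to both ends. Your small bookkeeping checks ($\vs_0\in\vS_k$ via $|\vs_0|\le|\vr_1|<k$, and that inverting swaps the roles at the two endpoints) are precisely the details the paper leaves unspoken.
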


\section{Structurally submodular separation systems}\label{sec:finite_merging_submodular}

In this section we will prove the first tree-of-tangles theorem of this paper.
It is a theorem for regular profiles, all of the same structurally submodular separation system, and states as follows:

\begin{restatable}{THM}{ToTviaTTD}\label{thm:ToTviaTTD}
	Let $ \vS $ be a structurally submodular separation system. Then $ S $ contains a nested set that distinguishes the set of regular profiles of~$ S $.
\end{restatable}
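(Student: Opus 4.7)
My plan is to apply the tangle-tree duality theorem (\cref{thm:duality}) to $\vS$ with a carefully chosen set $\cF$ of stars, designed so that no $\cF$-tangle of $S$ can exist. The desired nested distinguishing set will then arise as the set of edge-labels of the $S$-tree over $\cF$ handed back by the theorem; such an $S$-tree may be assumed irredundant via \cref{lem:TreeSets_6.2}, and then by \cref{lem:TreeSets_6.3} its edge-labels automatically form a nested set in $\vS$.

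Concretely, let $\cP$ denote the (finite) set of regular profiles of $S$. For each $P\in\cP$ I would associate a star $\sigma_P\sub P$, namely the set of $\le$-maximal elements of $P$. To verify that $\sigma_P$ really is a star, I would argue that any two distinct $\vr,\vs\in\sigma_P$ satisfy $\vr\le\sv$: if not, then structural submodularity puts either $\vr\join\vs$ or $\vr\meet\vs$ into $\vS$, and then maximality together with the profile property (in the former case), or consistency together with regularity (in the latter), leads to a contradiction. I would then set
\[
    \cF \coloneqq \{\,\sigma_P : P\in\cP\,\} \cup \{\,\{\vs\} : \vs\in\vS \text{ is cotrivial or cosmall}\,\},
\]
possibly enlarged so that every consistent orientation of $\vS$ which is not itself a regular profile also contains some star from $\cF$. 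By construction, no $\cF$-tangle of $S$ then exists.

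To invoke \cref{thm:duality} I still need $\vS$ to be $\cF$-separable: \cref{lem:separability} gives separability of $\vS$, and closure of $\cF$ under shifting should follow from the observation that any shift of a star contained in some regular profile is again contained in some regular profile, via structural submodularity. With the hypotheses of \cref{thm:duality} in place, the theorem yields an irredundant $S$-tree $(T,\alpha)$ over $\cF$, and $N \coloneqq \alpha(\vE(T))$ is a nested subset of $\vS$.

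Finally, $N$ should distinguish $\cP$: since no $P\in\cP$ is an $\cF$-tangle, some star $\alpha(t)\in\cF$ sits inside $P$, and a standard concentration argument for $S$-trees over stars shows that $P$ corresponds to a unique such node $t_P\in V(T)$. Distinct profiles concentrate at distinct nodes, so any edge on the path in $T$ from $t_{P_1}$ to $t_{P_2}$ yields a separation in $N$ that distinguishes $P_1$ and $P_2$. The main obstacle is the careful design of $\cF$: it has to be large enough that no non-profile consistent orientation of $\vS$ is an $\cF$-tangle, yet controlled enough that $\cF$-separability still holds. Navigating this balance using structural submodularity is where the real work lies.
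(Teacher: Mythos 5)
Your overall architecture is exactly the paper's: engineer a set $\cF$ of stars so that no $\cF$-tangle of $S$ exists, apply \cref{thm:duality}, and read off the nested set from the resulting $S$-tree. However, your construction has a real gap at its core.

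You claim that $\sigma_P \coloneqq \max P$ is automatically a star, but this is false in general and your argument for it does not close. In the case $\vr\meet\vs\in\vS$ (with $\vr\join\vs\notin\vS$), consistency and the profile property only give $\vr\meet\vs\in P$ — which is perfectly compatible with $P$ being a regular profile and with $\vr,\vs$ both being $\le$-maximal in $P$. There is no contradiction from regularity here. The paper is explicit that the analogous set $\cF=\cP\cup\cC\cup\cM$ (with $\cM=\{\max P : P\}$) ``is not even a set of stars yet,'' and it resolves this by passing to the set $\cF^*$ of \emph{uncrossings} of elements of $\cF$, relying on \cref{lem:uncrossing} to show the uncrossing step does not change which consistent orientations avoid $\cF$. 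Without this (or something equivalent), you cannot feed your $\cF$ into \cref{thm:duality} at all, since the theorem requires a set of stars.

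The same pattern repeats at the other two points you flag as ``should follow'' or ``possibly enlarged.'' Your set $\{\sigma_P\}\cup\{\text{cosmall singletons}\}$ is not closed under shifting (a shift of $\max P$ is not a $\max Q$), so $\cF$-separability does not hold for it as written; the paper explicitly closes under shifting to obtain $\hat{\cF^*}$ and invokes \cref{lem:shift} to check this does not create new tangles. And the enlargement needed so that non-profile consistent orientations contain a star from $\cF$ is not a minor detail: the paper adds the profile-triple set $\cP$ and uncrosses it (or, in the ``direct proof,'' works instead with the much larger $\cF_\cP$ of \emph{all} stars lying in at most one profile, for which closure under shifting and coverage of non-profiles follow from \cref{lem:shift,lem:non_profiles,lem:profile-star}). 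In short: the skeleton is right, but the three places where you wave your hands — $\max P$ being a star, shift-closure, and the non-profile enlargement — are precisely the places where the paper does the actual work.
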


By itself~\cref{thm:ToTviaTTD} is nothing special; indeed, it is a slight weakening of~\cref{thm:Daniel}, which asserts the same but without requiring the profiles to be regular. In this case the ingredients of the proof are more interesting than its result: we shall obtain~\cref{thm:ToTviaTTD} as a direct consequence of~\cref{thm:duality}.

So let $ \vS $ be a structurally submodular separation system inside some universe~$ \vU $. Since we are interested in the regular profiles of~$ S $ we may assume that $ S $ has no degenerate elements. Our strategy will be as follows: we shall construct a set $ \cF\sub 2^{\vU} $ for which there is no $ \cF $-tangle of $ S $, and such that every element of $ \cF $ is included in at most one regular profile of~$ S $. If we can achieve this then~\cref{thm:duality} applied to this set~$ \cF $ will yield an $ S $-tree over~$ \cF $. The set $ N $ of edge labels of this $ S $-tree~$ (T,\alpha) $ will then be the desired nested set distinguishing all regular profiles of~$ S $: each regular profile $ P $ of~$ S $ orients the edges of~$ T $ and hence includes a star~$ \sigma $ of the form $ \alpha(t) $ for some $ t\in V(T) $. By choice of~$ \cF $ this~$ \sigma $ is included in no other regular profile of~$ S $, which means that it distinguishes~$ P $ from all other profiles.

To construct this set~$ \cF $, first let~$ \cP $ be the set of all `profile triples' in $ \vS $: the set of all~$ {\menge{\vr,\vs,(\vr\join\vs)^*}\sub\vS} $. For a consistent orientation of~$ S $ it is then equivalent to be a profile of~$ S $ and to be a~$ \cP $-tangle. Furthermore let $ \cC $ be the set of all $ \menge{\vs} $ with $ \vs\in\vS $ co-small. Finally, let $ \cM $ consist of each of the sets $ \max P $ of maximal elements of $ P $ for each regular profile $ P $ of~$ S $. We then take
\[ \cF\coloneqq\cP\cup\cC\cup\cM\,. \]
With these definitions the regular profiles of $ S $ are precisely its $ (\cP\cup\cC) $-tangles; and there are no $ \cF $-tangles of $ S $ since each regular profile $ P $ of~$ S $ includes~$ \max P\in\cM\sub\cF $. If this $\cF$ were a set of of stars and if we could feed this $ \cF $ to~\cref{thm:duality}, we would receive an~$ S $-tree over~$ \cF $ and the edge labels of this $ S $-tree would be our desired nested set, since each element of $ \cF $ in included in at most one regular profile of~$ S $: indeed, the regular profiles of $ S $ have no subsets in $ \cP $ or $ \cC $, and each element $ \max P\in\cM $ in included only in~$ P $ itself.

Unfortunately, we are still some way off from plugging $ \cF $ into~\cref{thm:duality}: we need to ensure that~$ \cF $ is a set of stars that is standard for~$ S $ and that~$ S $ is~$ \cF $-separable. Out of these the second and one half of the third are easy:~$ \cF $ is standard for~$ S $ since~$ \cC\sub\cF $ is, and~$ S $ is separable by~\cref{lem:separability}.

We thus need to show that~$ S $ is not only separable but~$ \cF $-separable. Unfortunately our current set~$ \cF $ is not even a set of stars yet. However, in~\cite{ProfileDuality} a solution was laid out for this exact situation: a series of lemmas from~\cite{ProfileDuality} shows that we can simply \emph{make} $ \cF $ a set of stars and close it under shifting without altering the set of $ \cF $-tangles of~$ S $.

The way to do this is as follows. Given two elements~$ \vr $ and~$ \vs $ of some set $ \sigma\sub\vS $, by submodularity, either $ \vr\meet\sv $ or $ \rv\meet\vs $ must lie in~$ \vS $. \emph{Uncrossing $ \vr $ and $ \vs $ in $ \sigma $} then means to replace either $ \vr $ by $ \vr\meet\sv $ or $ \vs $ by $ \rv\meet\vs $, depending on which of these two lies in~$ \vS $. (Structural submodularity ensures that at least one of them does.) Uncrossing all pairs of elements of $ \sigma $ in turn yields a star $ \sigma^* $, which we call an~\emph{uncrossing} of~$ \sigma $. (Note that $ \sigma^* $ is not in general unique since it depends on the order in which one uncrosses the elements of~$ \sigma $.) It is then easy to see that a regular profile of $ S $ includes $ \sigma $ if and only if it includes~$ \sigma^* $:
\begin{LEM}[\cite{ProfileDuality}*{Lemma~11}]\label{lem:uncrossing}
	If a regular profile of~$ S $ includes an uncrossing of some set, it also includes that set.
	
	Conversely, if a regular consistent orientation of~$ S $ includes some set, it also includes each uncrossing of that set.
\end{LEM}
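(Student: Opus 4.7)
The plan is to verify both halves for a single uncrossing step and then iterate, since $\sigma^{*}$ arises from $\sigma$ by finitely many elementary replacements of this form. By symmetry I may assume the step replaces a chosen $\vs \in \sigma$ by $\rv \meet \vs \in \vS$ (the alternative, replacing $\vr$ by $\vr \meet \sv$, is handled the same way), producing $\sigma' \coloneqq (\sigma \setminus \{\vs\}) \cup \{\rv \meet \vs\}$.

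For the forward direction, let $P$ be a regular profile of $S$ with $\sigma' \subseteq P$. The goal is to conclude $\vs \in P$, so that $\sigma \subseteq P$. Suppose instead $\sv \in P$. Then $\vr$ and $\sv$ are both elements of $P$, and the profile property~\cref{property:P} applied to these two yields $\rv \meet \vs \notin P$. But by De Morgan $\rv \meet \vs = (\vr \join \sv)^{*}$, and $\rv \meet \vs \in \sigma' \subseteq P$, a contradiction. Hence $\vs \in P$.

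For the backward direction, let $O$ be a regular consistent orientation of $S$ with $\sigma \subseteq O$. I want $\rv \meet \vs \in O$. If not, then $O$ orients this separation the other way, so $\vr \join \sv \in O$. Since $\vs \in O$ and $(\vr \join \sv)^{*} = \rv \meet \vs \leq \vs$, consistency forces the underlying unoriented separation of $\vr \join \sv$ to coincide with $s$; that is, $\vr \join \sv \in \{\vs, \sv\}$. In the first case $\sv \leq \vs$ makes $\vs$ cosmall, contradicting the regularity of $O$; in the second case $\sv \in O$ contradicts antisymmetry of the orientation, since $\vs \in O$. Hence $\rv \meet \vs \in O$.

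The main obstacle lies in those collapse subcases of the backward direction, where the new corner $\rv \meet \vs$ might coincide with an already-oriented separation; regularity of $O$ is exactly the hypothesis that rules these out. Iterating along the sequence of elementary uncrossings that takes $\sigma$ to $\sigma^{*}$ then extends both implications to the full statement.
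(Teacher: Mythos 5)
Your proof is correct, and it is the expected argument: reduce to a single elementary uncrossing step, then use the profile property for the forward direction and consistency together with regularity for the backward direction, where regularity rules out the collapse case $\vr\join\sv=\vs$ and antisymmetry the case $\vr\join\sv=\sv$. The paper itself only cites this as Lemma~11 of~\cite{ProfileDuality} without reproducing a proof, so there is no in-text argument to compare against, but your reasoning is sound and complete.
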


Let us write $ \cF^* $ for the set of all uncrossings of elements of~$ \cF $. Then $ \cF^* $ is a set of stars that is standard for~$ S $. We are still not done, however, since $ \cF^* $ need not be closed under shifting. We can fix this in a similar manner though.

Just as for uncrossings it is not hard to show that the inclusion of a star's shift in a regular profile implies that star's inclusion:

\begin{LEM}[\cite{ProfileDuality}*{Lemma~13}]\label{lem:shift}
	If a regular profile of~$ S $ includes a shift of some star, it also includes that star.
\end{LEM}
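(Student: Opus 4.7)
Suppose $P$ is a regular profile of $S$ that includes the shift $\sigma^{\vs_0}_\vx$ of a star $\sigma\sub\vS$. Unpacking the definition, this means $\vx\join\vs_0\in P$ and $\vy\meet\sv_0\in P$ for every $\vy\in\sigma\sm\menge{\vx}$; the task is to show $\vz\in P$ for every $\vz\in\sigma$. I would handle the distinguished element $\vx$ and the remaining $\vy$ separately, using consistency for the former and the profile property \cref{property:P} for the latter.

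First, I would establish $\vx\in P$. Suppose instead that $\xv\in P$. Then $\xv$ and $\vx\join\vs_0$ both lie in $P$, and $(\xv)^* = \vx \le \vx\join\vs_0$; consistency of $P$ therefore forces $\vx\join\vs_0$ to be an orientation of the separation $x$ itself. If $\vx\join\vs_0=\vx$, then $\vx\in P$, contradicting $\xv\in P$ since $P$ is an orientation. If $\vx\join\vs_0=\xv$, then $\vx\le\xv$, so $\xv$ is cosmall, contradicting regularity of $P$. Either way we reach a contradiction, so $\vx\in P$.

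Second, I would rule out $\yv\in P$ for each $\vy\in\sigma\sm\menge{\vx}$ via the profile property. Since $\sigma$ is a star and $\vx\ne\vy$, we have $\vy\le\xv$ and hence $\vy\meet\xv=\vy$. If we assume for contradiction that $\yv\in P$, then applying \cref{property:P} to $\vr\coloneqq\vx\join\vs_0\in P$ and $\vs\coloneqq\yv\in P$ gives, via DeMorgan,
\[
    \rv\meet\sv \;=\; (\xv\meet\sv_0)\meet\vy \;=\; (\xv\meet\vy)\meet\sv_0 \;=\; \vy\meet\sv_0,
\]
which lies in $P$ by hypothesis. This contradicts the profile property, so $\yv\notin P$ and thus $\vy\in P$.

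The argument is not difficult; the only mildly delicate point is the small case analysis for $\vx\join\vs_0$ collapsing to $\vx$ or $\xv$, which is precisely where the regularity of $P$ is needed to discard the cosmall option. The rest is a direct DeMorgan calculation combined with the star inequality $\vy\le\xv$, together with the profile property.
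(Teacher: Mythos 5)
Your proof is correct. Since the paper only cites \cite{ProfileDuality}*{Lemma~13} without reproducing its argument, there is no in-paper proof to compare against, but your decomposition is the natural (and essentially the only) one given the available tools: consistency handles the distinguished element $\vx$, and the profile property together with DeMorgan and the star inequality $\vy\le\xv$ handles the remaining elements. Both halves of your argument check out — for $\vx$, the consistency definition ($\vr,\vs\in P$ with $\rv\le\vs$ forces $r=s$) applied to $\xv$ and $\vx\join\vs_0$ indeed forces $\vx\join\vs_0\in\{\vx,\xv\}$, with the degenerate option $\vx=\xv$ excluded because a regular profile cannot orient a degenerate separation; and for each other $\vy$, the computation $\rv\meet\sv=(\xv\meet\sv_0)\meet\vy=\vy\meet\sv_0$ (using $\vy\le\xv$) is exactly right, so $\yv\in P$ would put both $\vx\join\vs_0,\yv$ and their corner $\vy\meet\sv_0$ in $P$, contradicting property~(P). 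You correctly identify that regularity is needed only to discard the cosmall branch; this matches the paper's remark that the technical side conditions on $\vs_0$ in \cite{ProfileDuality} are not actually needed for this lemma.
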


In~\cite{ProfileDuality} the definition of a shift of a star contains additional technical assumptions on $ \sigma $ and $ \vs_0 $, keeping in line with the precise assumptions of~\cref{thm:duality}. However the proof of~\cref{lem:shift} does not necessitate this, and neither does its application.

\cref{lem:shift} says that if we close~$ \cF^* $ under shifting we, again, do not alter the set of~$ \cF^* $-tangles of~$ S $. Formally, set $ \cG_0=\cF^* $, and for $ i\ge 1 $ let $ \cG_i $ be the set of all shifts of star in~$ \cG_{i-1} $. Write~$ \hat{\cF^*}\coloneqq\bigcup_{i\in\N}\cG_i $. Then by~\cref{lem:shift} the $ \hat{\cF^*} $-tangles of~$ S $ are precisely its~$ \cF^* $-tangles, which is to say that there are no~$ \hat{\cF^*} $-tangles of~$ S $. Moreover this set $ \hat{\cF^*} $ still has the property that each star in it is included in at most one regular profile: let us say that~$ \hat{\sigma^*}\in\hat{\cF^*} $ \emph{originates from} $ \sigma\in\cF $ if $ \hat{\sigma^*} $ can be obtained by a series of shifts from an uncrossing of~$ \sigma $. Lemmas~\ref{lem:uncrossing} and~\ref{lem:shift} then say that if $ \hat{\sigma^*}\sub P $ for a regular profile $ P $, and $ \hat{\sigma^*} $ originates from~$ \sigma\in\cF $, then~$ \sigma\sub P $. Since the only element of~$ \cF $ which~$ P $ includes is~$ \max P $, this implies that no other regular profile of~$ S $ includes~$ \hat{\sigma^*} $.

We can thus formally prove~\cref{thm:ToTviaTTD}:

\begin{proof}[Proof of~\cref{thm:ToTviaTTD}]
	Define $ \cP $, $ \cC $, $ \cM $, $ \cF $, $ \cF^* $, and $ \hat{\cF^*} $ as above. Then $ \hat{\cF^*} $ is standard for~$ S $ since $ \cC\sub\hat{\cF^*} $, and closed under shifting by construction. By~\cref{lem:separability}~$ S $ is separable. Together this gives that $S$ is $\cF$-separable. Hence we can apply the tangle-tree duality theorem~\ref{thm:duality} to obtain either an~$ \hat{\cF^*} $-tangle of~$ S $ or an~$ S $-tree over~$ \hat{\cF^*} $.
	
    We claim that the first is impossible. For suppose that~$ P $ is some~$ \hat{\cF^*} $-tangle of~$ S $. From $ \cC\sub\hat{\cF^*} $ we know that $ P $ is a regular and consistent orientation of~$ S $. If~$ P $ has the profile property~\eqref{property:P} then we could derive a contradiction from Lemmas~\ref{lem:uncrossing} and~\ref{lem:shift} since $ S $ has no~$ \cF $-tangle. On the other hand, if $ P $ is not a profile, then $ P $ includes some set~$ \sigma\in\cP $. By the second part of~\cref{lem:uncrossing} $ P $ then also includes some (in fact: each) uncrossing of~$ \sigma $ and hence a set in~$ \cF^*\sub\hat{\cF^*} $, contrary to its status as an~$ \hat{\cF^*} $-tangle.
	
	So let $ (T,\alpha) $ be the $ S $-tree over~$ \hat{\cF^*} $ returned by~\cref{thm:duality}, which we may assume to be irredundant~(\cref{lem:TreeSets_6.2}). Let $ \vN $ be the image of~$ \alpha $. Then~$ N $ is a nested subset of~$ S $~(\cref{lem:TreeSets_6.3}).
	Let us show that $ N $ distinguishes all regular profiles of~$ S $. Since~$ (T,\alpha) $ is an~$ S $-tree over~$ \hat{\cF^*} $ each consistent orientation of~$ S $ includes some star $ \hat{\sigma^*}\in\hat{\cF^*}\cap 2^\vN $. In particular if $ P $ is a regular profile of~$ S $ then $ P $ includes some~$ \hat{\sigma^*}\in\hat{\cF^*}\cap 2^\vN $. Since the only element of $ \cF $ which $ P $ includes is~$ \max P $, this~$ \hat{\sigma^*} $ must originate from~$ \max P $. Consequently no other regular profile of~$ S $ can include~$ \hat{\sigma^*} $, since none of them include~$ \max P $. Thus $ \hat{\sigma^*} $ distinguishes $ P $ from every other regular profile of~$ S $. Since~$ P $ was arbitrary this shows that $ N $ distinguishes all regular profiles of~$ S $.
\end{proof}

Let us make some remarks on this proof of~\cref{thm:ToTviaTTD}. First, in the definition of~$ \cF $, we could have used other sets~$ \cM $: the only properties of~$ \cM $ that every regular profile of~$ S $ contains some set from~$ \cM $, and that no element of~$ \cM $ is included in more than one such regular profile. We will put this observation to good use in \cref{sec:finite_merging_degrees}, where we will make a more refined choice for~$ \cM $ than simply collecting the sets of maximal elements from each profile.

Second, with the approach shown here it is not easy to strengthen~\cref{thm:ToTviaTTD} to the level of~\cref{thm:Daniel} by dropping the assumption of regularity, since~\cref{lem:shift} cannot do without this regularity.

In the remainder of this section we will show a more direct version of the proof presented above.
This proof will be the guiding principle by which we will approach the issues of efficiency and profiles of differing order in \cref{sec:efficiency,sec:different_order}.

The core idea is that one can take as $ \cF $ the set of all stars that are included in at most one regular profile of~$ S $.
An $ S $-tree over this set~$ \cF $ would immediately lead to the desired nested set distinguishing all regular profiles. Moreover this~$ \cF $ is standard for~$ S $ since~$ \cC\sub\cF $. To obtain this $ S $-tree over~$ \cF $ from~\cref{thm:duality} one would only need to show two things, namely that $\vS$ is $\cF$-separable and that there is no~$ \cF $-tangle of~$ S $. The first of these amounts to~\cref{lem:shift}; the second requires the two insights that every $ \cF $-avoiding consistent orientation is a regular profile, and that each regular profile of~$ S $ includes some star in~$ \cF $, both of which retrace some steps of~\cref{lem:uncrossing}.

\begin{LEM}\label{lem:profile-star}
    Let $\vS \sub \vU$ be a structurally submodular separation system and let $P$ be a profile of $\vS$.
    There exists a star $\sigma \subseteq P$ such that no other profile of $\vS$ includes~$\sigma$.
\end{LEM}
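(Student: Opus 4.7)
The plan is to take $\sigma$ to be an uncrossing of $\max P$, the set of $\le$-maximal elements of $P$. Since $\vS$ is finite, $\max P$ is nonempty, and by walking up in $P$ every $\vt \in P$ satisfies $\vt \le \vr$ for some $\vr \in \max P$; combined with consistency, this means $P$ is completely determined by $\max P$.

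If $\max P$ is not already a star, I would uncross its crossing pairs: for $\vr, \vs \in \max P$ distinct with $\vr \nleq \sv$, structural submodularity puts at least one of $\vr \meet \sv$ or $\rv \meet \vs$ into $\vS$, and I would replace the corresponding element by that corner. The key check is that the replacement stays in $P$: if, say, $\vr \meet \sv \in \vS$ but $\vr \meet \sv \notin P$, then the orientation forces $\rv \join \vs \in P$; together with $\vr \in P$ and $\rv \le \rv \join \vs$, consistency is violated whenever the underlying separations of $\vr$ and $\rv \join \vs$ differ, and when they coincide one derives a contradiction with antisymmetry of $P$ or with maximality in $\max P$. Iterating, we arrive at a star $\sigma \sub P$.

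For uniqueness, let $P'$ be any profile with $\sigma \sub P'$. Applying~\cref{lem:uncrossing} in reverse to the successive uncrossing steps that produced $\sigma$ yields $\max P \sub P'$. Because $\max P$ dominates $P$, every $\vt \in P$ satisfies $\vt \le \vr$ for some $\vr \in \max P \sub P'$, so consistency of $P'$ forces $\vt \in P'$; hence $P \sub P'$, and since both are orientations of $S$ we conclude $P = P'$.

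The main obstacle is verifying that the uncrossing procedure stays within $P$ at every step, and that the converse direction of~\cref{lem:uncrossing} applies in the uniqueness argument; these are precisely the ``steps from Lemma~11'' foreshadowed in the preceding discussion, and they depend essentially on combining the profile property, consistency, and structural submodularity.
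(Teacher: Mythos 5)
Your approach via $\max P$ and uncrossing is workable in spirit, but it departs from the paper's route in a way that undercuts the point of this lemma. The paper proves \cref{lem:profile-star} by a self-contained extremal argument: choose a star $\sigma\sub P$ that is included in the fewest profiles, and suppose some $P'\ne P$ still includes $\sigma$. Taking a distinguisher $s$ of $P$ and $P'$ that crosses as few elements of $\sigma$ as possible, the profile property and structural submodularity show that a corner of $s$ with a crossed $\vt\in\sigma$ yields either a better $s$ (one fewer crossing) or a better $\sigma$ (replace $\vt$ by $\vt\meet\vs$ and $\vt\meet\sv$, giving a star included in strictly fewer profiles), a contradiction either way. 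This argument deliberately does not rely on \cref{lem:uncrossing}; as the surrounding text emphasises, the purpose of \cref{lem:profile-star,lem:non_profiles} is precisely to obtain \cref{thm:ToTviaTTD} ``without resorting to \cref{lem:uncrossing}.''

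Your proof instead takes $\sigma$ to be an uncrossing of $\max P$ and then invokes \cref{lem:uncrossing} to pass from $\sigma\sub P'$ to $\max P\sub P'$. Two issues arise. First, that part of \cref{lem:uncrossing} applies only to a \emph{regular} profile $P'$, whereas \cref{lem:profile-star} places no regularity restriction on the competing profile; as written, your argument only rules out regular $P'\ne P$ containing $\sigma$. This weaker conclusion happens to suffice for the direct proof of \cref{thm:ToTviaTTD} (where $\cP$ consists of the regular profiles), so it is not fatal in context, but it is a genuine gap relative to the lemma as stated. Second, and more importantly, invoking \cref{lem:uncrossing} quietly reintroduces the very dependence that this section of the paper is structured to avoid. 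A small further remark on your verification that uncrossing stays inside $P$: the degenerate case where $\vr$ and $\rv\join\vs$ share an underlying separation is excluded because either $\vr=\rv\join\vs$ or $\rv=\rv\join\vs$ would force $r$ and $s$ to be nested, contradicting that they cross --- not by antisymmetry of $P$ or maximality in $\max P$ as you suggest.
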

\begin{proof}
    Let $\sigma \sub P$ be a star which minimizes the number of profiles which include~$\sigma$.
    Suppose for a contradiction that there exists a profile $P' \neq P$ with $\sigma \sub P$.
    Some separation $s$, say, distinguishes $P$ from $P'$. Clearly $s$ crosses some element of~$\sigma$.

    Suppose that, subject to the above, $\sigma$ and $s$ are chosen such that the number of separations in $\sigma$ that $s$ crosses is minimum.
    Let $\vt \in \sigma$ be a separation that $s$ crosses.
    If either of the corner separations $\vt\join\vs$ or $\vt\join\sv$ was in $\vS$ then, by the profile property, it would distinguish $P$ and $P'$.
    It would also, by the fish~\cref{lem:fish}, cross one less separation in $\sigma$ than $s$ does, contradicting the choice of~$s$.

    So by submodularity the corner separations $\vt\meet\vs$ and $\vt\meet\sv$ are in $\vS$.
    Note that, by the profile property, any profile including \[
        \sigma' \coloneqq \sigma \sm \menge{\vt} \cup \menge{\vt\meet\vs,\, \vt\meet\sv}
    \] also includes~$\sigma$.
    Consequently $\sigma'$ together with $s$ are a better choice than $\sigma$ and $s$, a~contradiction.
\end{proof}

\begin{LEM}\label{lem:non_profiles}
 Given any set $\cP$ of profiles of $\vS$, every consistent orientation $O$ of $\vS$ which is not a profile in $\cP$ contains a star $\sigma$ which is not contained in any profile in $\cP$.
\end{LEM}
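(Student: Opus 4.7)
The plan is to split according to whether $O$ is itself a profile of $\vS$. If $O$ is a profile, then since $O\notin\cP$, applying \cref{lem:profile-star} to the profile $O$ immediately produces a star $\sigma\subseteq O$ included in no other profile of $\vS$; in particular no profile in $\cP$ includes $\sigma$, and this case is done.

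The substantive case is when $O$ is not a profile. Then the profile property fails on $O$, providing $\vr,\vs\in O$ with $\vq\coloneqq\rv\meet\sv\in O$. The triple $\{\vr,\vs,\vq\}$ is almost a star --- both $\vr$ and $\vs$ sit below $\qv=\vr\join\vs$ --- but $\vr$ and $\vs$ themselves may cross, so I uncross them in the spirit of \cref{lem:uncrossing}: by structural submodularity applied to $\vr$ and $\sv$, at least one of $\vr\meet\sv$ and $\rv\meet\vs$ lies in $\vS$. Up to swapping the roles of $\vr$ and $\vs$ I may assume that $\vr\meet\sv\in\vS$, and then set $\sigma\coloneqq\{\vr\meet\sv,\,\vs,\,\vq\}$.

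Three things then remain to check about $\sigma$, and I expect only the third to carry any real substance. First, $\sigma$ is a star: a routine verification from $\vr\meet\sv\le\sv$, from $\vr\meet\sv\le\vr\le\qv$, and from the defining inequalities $\vq\le\rv$ and $\vq\le\sv$. Second, $\sigma\subseteq O$: the only new element is $\vr\meet\sv$, and since $\vr\meet\sv\le\vr\in O$, consistency of $O$ forces $\vr\meet\sv\in O$ (as its inverse $\rv\join\vs$ would witness a consistency violation together with $\vr$). Third, no profile of $\vS$ contains $\sigma$: if some profile $P$ did, then $\vr\meet\sv,\vs\in P$ rule out $\rv\in P$ (the profile property applied to the pair $(\rv,\vs)$ would otherwise forbid $\vr\meet\sv\in P$), which forces $\vr\in P$; but then the profile property applied to $(\vr,\vs)$ forbids $\rv\meet\sv=\vq\in P$, contradicting $\vq\in\sigma\subseteq P$. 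Since this last argument uses only the profile property of $P$, it works without any regularity assumption on $O$ or on the profiles in $\cP$.
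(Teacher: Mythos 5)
Your proof is correct, and it takes a genuinely different route from the paper's. You split on whether $O$ is a profile of $\vS$ at all: if it is (but lies outside $\cP$), you invoke \cref{lem:profile-star}, which the paper does not use here; if it is not, you explicitly build a three-element star from a triple $\{\vr,\vs,\rv\meet\sv\}$ witnessing the failure of the profile property, uncrossing $\vr$ and $\vs$ by structural submodularity and checking directly that no profile can include the result. The paper instead runs a single uniform exchange argument: it picks a $\le$-minimal set $\vN\sub O$ containing, for every $P\in\cP$, one separation oriented the other way by $P$, and shows via uncrossing that such a minimal $\vN$ must be nested, so that its $\le$-maximal elements form the required star. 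Your decomposition is more modular and, in the non-profile branch, yields a star of size at most three --- it is in effect \cref{lem:profiles_stars}, which the paper only establishes separately later in \cref{sec:efficient_degrees}. The paper's argument avoids the case distinction and is independent of \cref{lem:profile-star}. Both use only structural submodularity, consistency, and the profile property, so neither route smuggles in assumptions beyond those in the statement.
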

\begin{proof}
 Since $O$ is not a profile in $\cP$ there is, for every profile $P$ in $\cP$, a separation $s$ such that $\vs\in O$ but $\sv\in P$. Pick a set $\vN\subseteq O$ which contains one such separation for every profile in $\cP$ and is, subject to this, $\le$-minimal: That is, there is no other such set $N'$ together with an injective function $\alpha:N'\to N$ satisfying $\vs'\le \alpha(\vs')$ for all $\vs'\in N'$.
 
 If $N$ is a nested set, then $N$ contains the desired star, so suppose that $\vs,\vt\in N$ cross. By submodularity we may suppose, after possibly renaming $\vs$ and $\vt$, that $\vs\meet\tv\in S$ and thus, by consistency, $\vs\meet\tv\in O$. We claim that $N\sm \{\vs\}\cup \{\vs\meet\tv\}$ is also a candidate for $N$, contradicting the $\le$-minimality. So suppose that $N\sm \{\vs\}\cup \{\vs\meet\tv\}$ does not contain a separation $\vr$ such that $\rv\in P$, say. Then clearly $\sv\in P$ and $\vt\in P$, thus, by the profile property $\sv\join \vt\in P$ which is precisely such an $\rv$, a contradiction.
\end{proof}

We are now ready to give a proof of \cref{thm:ToTviaTTD} without resorting to \cref{lem:uncrossing}:
\ToTviaTTD*
\begin{proof}[Direct Proof] Let $\cP$ be the set of regular profiles of $S$.
    Let $\cF_\cP \sub 2^\vS$ consist of all stars $\sigma \sub \vS$ for which one of the following is true:
    \begin{enumerate}[label={(\roman*)}]
        \item No profile in $\cP$ includes $\sigma$, or
        \item Precisely one profile in $\cP$ includes $\sigma$.
    \end{enumerate}
    This $\cF_\cP$ is, by \cref{lem:shift}, closed under shifting: any shift of a star contained in at most one profile  is again contained in at most one profile.
    The set $\cF_\cP$ is also standard for $\vS$, since cosmall separations are contained in no regular profile.

    By \cref{thm:duality} there either exists an $S$-tree over $\cF_\cP$, or an $\cF_\cP$-tangle of $S$.
    In the former case we obtain the desired nested set.
    For the latter case observe that every $\cF_\cP$-tangle $P$, say, is a regular profile: By \cref{lem:non_profiles} every consistent orientation which avoids $\cF_\cP$ is a profile and if $P$ would not be regular, it would contain a cosmall separation $\vs$ which is impossible, since $\{\vs\}\in \cF_\cP$.
    So by \cref{lem:profile-star} there exists a star $\sigma\sub P$ which every profile other than $P$ avoids.
    In particular $\sigma \in \cF_\cP$, which contradicts the fact that $P$ is an $\cF_\cP$-tangle.
\end{proof}

\section{Application: Degrees in trees of tangles}\label{sec:finite_merging_degrees}
In this section we are going to see that our proof of \cref{thm:ToTviaTTD} in \cref{sec:finite_merging_submodular} has one advantage over the usual, more direct proofs of \cref{thm:ToTviaTTD} from \cite{AbstractTangles,FiniteSplinters}: It allows us to easily control the maximum degree of the resulting tree. More precisely:
Let $ S $ be a structurally submodular separation system and $ P $ a regular profile of $ S $. In this section we answer the following question: over all trees of tangles that distinguish all regular profiles of $ S $, how low can the degree of the node containing $ P $ in those trees of tangles be?

Let us first make this notion of degree in a tree of tangles formal. For the purposes of this application only, a \emph{tree of tangles (for~$ S $)} is an irredundant $ S $-tree $ (T,\alpha) $ whose set of edge labels distinguishes all regular profiles of~$ S $. For a regular profile~$ P $ of~$ S $ and a tree of tangles~$ (T,\alpha) $, the \emph{node of~$ P $ in~$ T $} is the unique sink of the orientation of~$ T $'s edges induced by~$ P $, and the \emph{degree of~$ P $ in~$ (T,\alpha) $} is the degree of this node.

Our question is thus: what is the minimum degree of $ P $ in $ (T,\alpha) $ over all trees of tangles~$ (T,\alpha) $?

A lower bound for this degree can be established as follows. Let $ \delta(P) $ denote the minimal size of a set of separations which distinguishes~$ P $ from all other regular profiles of~$ S $. If $ t $ is the node of~$ P $ in some tree of tangles~$ (T,\alpha) $ then~$ \alpha(t) $ is such a set of separations which distinguishes~$ P $ from all other regular profiles of~$ S $; thus, the degree of~$ P $ in every tree of tangles~$ (T,\alpha) $ is at least~$ \delta(P) $.

We show that this lower bound can be achieved: there is a tree of tangles $ (T,\alpha) $ for~$ S $ in which~$ P $ has degree exactly~$ \delta(P) $. In fact~$ (T,\alpha) $ will be optimal in this sense not just for~$ P $, but for all regular profiles of~$ S $ simultaneously. Additionally the degrees of those nodes of~$ (T,\alpha) $ that are not the node of some regular profile will not be unreasonably high: the maximum degree of~$ T $ will be attained in some profiles' node.

\begin{THM}\label{thm:Degree}
	Let $ S $ be a structurally separation system. Then there is a tree of tangles $ (T,\alpha) $ for~$ S $ in which each regular profile~$ P $ of~$ S $ has degree exactly~$ \delta(P) $. Furthermore, if~$ \Delta(T)>3 $, then~$ \Delta(T)=\delta(P) $ for some regular profile~$ P $ of~$ S $.
\end{THM}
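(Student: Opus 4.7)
The plan is to refine the proof of \cref{thm:ToTviaTTD} by making a sharper choice of the set $\cM$. As noted in the remarks following that proof, only two properties of $\cM$ are used: every regular profile of $S$ includes some element of $\cM$, and no element of $\cM$ is included in more than one regular profile. To control degrees I would, for each regular profile $P$ of $S$, fix a set $\sigma_P\sub S$ of cardinality $\delta(P)$ that distinguishes $P$ from every other regular profile, orient its separations by $P$, and uncross the resulting oriented set into a star $\sigma_P^*$. Then $\sigma_P^*$ still has $\delta(P)$ elements, is included in $P$ by the second part of \cref{lem:uncrossing}, and is included in no other regular profile: by the first part of \cref{lem:uncrossing}, $\sigma_P^*\sub P'$ would force $P'$ to include the $P$-orientation of $\sigma_P$, which is impossible for $P'\ne P$ since some separation in $\sigma_P$ distinguishes $P$ from $P'$. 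I set $\cM\coloneqq\menge{\sigma_P^*\mid P\text{ a regular profile of }S}$ and run the argument of \cref{thm:ToTviaTTD} verbatim, producing an irredundant $S$-tree $(T,\alpha)$ over $\hat{\cF^*}$.

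Since uncrossing preserves cardinality and shifting does not increase it, every $\alpha(t)\in\hat{\cF^*}$ has at most as many elements as the set in $\cF$ it originates from: at most $1$ if from $\cC$, at most $3$ if from $\cP$, and at most $\delta(Q)$ if from some $\sigma_Q^*\in\cM$. For the first assertion, let $t$ be the node of a regular profile $P$, so $\alpha(t)\sub P$. The profile property of $P$ rules out origin in $\cP$, and regularity rules out origin in $\cC$; so $\alpha(t)$ originates from some $\sigma_Q^*\in\cM$, and by \cref{lem:uncrossing,lem:shift} the inclusion $\alpha(t)\sub P$ forces $\sigma_Q^*\sub P$ and hence $Q=P$. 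This gives $|\alpha(t)|\le\delta(P)$. Conversely, $\alpha(t)$ distinguishes $P$ from every other regular profile $P'$: the edge of $T$ incident to $t$ on the path to $P'$'s node is oriented towards $t$ by $P$ but away from $t$ by $P'$, so its $\alpha$-image distinguishes them. Hence $|\alpha(t)|\ge\delta(P)$, and irredundance gives $\deg_T(t)=|\alpha(t)|=\delta(P)$.

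For the degree bound, assume $\Delta(T)>3$ and let $t^*$ attain the maximum degree. Since $|\alpha(t^*)|>3$, the star $\alpha(t^*)$ cannot originate from $\cC$ or $\cP$; it must therefore originate from some $\sigma_P^*\in\cM$, giving $\Delta(T)=|\alpha(t^*)|\le\delta(P)$. Combined with $\delta(P)\le\Delta(T)$ from the first part, this yields $\Delta(T)=\delta(P)$. The main obstacle in this plan is the cardinality bookkeeping: one must check that uncrossing preserves cardinality (each uncrossing step just replaces one element by a corner) and that shifts, which could in principle create collisions, do not actually shrink $\alpha(t)$ below $\delta(P)$ at a profile node -- which is ensured by the matching lower bound from the distinguishing argument.
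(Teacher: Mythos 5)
Your proposal is correct and takes essentially the same approach as the paper: you replace $\cM$ by a family of size-$\delta(P)$ distinguishers $D_P$ (the paper keeps these as arbitrary subsets of $P$ and lets the $\cF\mapsto\cF^*$ uncrossing step handle them, whereas you pre-uncross to $\sigma_P^*$ — a cosmetic difference), then use that uncrossing and shifting cannot increase a star's size together with the matching lower bound $|\alpha(t)|\ge\delta(P)$ to pin down the degree, and handle $\Delta(T)>3$ by noting that stars originating from $\cP$ or $\cC$ have size at most $3$. One small imprecision: uncrossing need not preserve cardinality in general (a corner could coincide with another element of the star), only not increase it, but since you only use the upper bound $|\sigma_P^*|\le\delta(P)$ and derive the lower bound independently, the argument is unaffected.
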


To prove~\cref{thm:Degree} we will follow the first proof of~\cref{thm:ToTviaTTD}, making a more refined choice of~$ \cM $, and utilise the fact that uncrossing and shifting a set cannot increase its size.

We will later see an example of a structurally submodular separation system in which $ \delta(P)\le 2 $ for every profile $ P $ but $ \Delta(T)=3 $ for every tree of tangles $ T $; this will demonstrate that the last assertion of~\cref{thm:Degree} is optimal in that regard.

Observe further that the set of maximal elements of a profile~$ P $ is a set which distinguishes~$ P $ from every other profile of~$ S $. (In fact, the maximal elements of~$ P $ distinguish~$ P $ from every other consistent orientation of~$ S $.) Therefore~$ \delta(P)\le\abs{\max P} $ and hence the degree of~$ P $ in the tree of tangles from~\cref{thm:Degree} is at most~$ \abs{\max P} $.

Let us now prove~\cref{thm:Degree}:

\begin{proof}[Proof of~\cref{thm:Degree}]
	For each regular profile~$ P $ of~$ S $ pick a subset~$ D_P\sub P $ of size~$ \delta(P) $ which distinguishes~$ P $ from every other regular profile of~$ S $. Let~$ \cD $ be the set of these~$ D_P $. Define~$ \cP $ and~$ \cC $ as in the proof of~\cref{thm:ToTviaTTD}, and set
	\[ \cF\coloneqq\cP\cup\cC\cup\cD\,. \]
	From here, define~$ \cF^* $ and~$ \hat{\cF^*} $ just as in~\cref{thm:ToTviaTTD} and follow the same proof. The result is an $ S $-tree over~$ \hat{\cF^*} $, which we may assume to be irredundant and hence a tree of tangles for~$ S $.
	
	Now let~$ P $ be a regular profile of~$ S $, let $ t $ be the node of~$ P $ in~$ T $, and~$ \hat{\sigma^*}\coloneqq\alpha(t) $. As in the proof of~\cref{thm:ToTviaTTD} the only element of~$ \cF $ from which~$ \hat{\sigma^*} $ can originate is~$ D_P $. Since uncrossing and shifting~$ D_P $ cannot increase its size we have~$ \abs{\hat{\sigma^*}}\le\abs{D_P}=\delta(P) $. Conversely we have~$ \abs{\hat{\sigma^*}}\ge\delta(P) $ since~$ \hat{\sigma^*} $ distinguishes~$ P $ from all other regular profiles. Thus the degree of~$ P $ in~$ (T,\alpha) $ is indeed~$ \delta(P) $.
	
	Finally, if~$ \Delta(T)>3 $, the maximum degree of~$ T $ is attained in some node~$ t $ whose associated star~$ \alpha(t) $ originates from some~$ D_P\in\cD $, since all elements of~$ \hat{\cF^*} $ originating from elements of~$ \cP $ or~$ \cC $ have size at most three. As above we thus have $ \abs{\alpha(t)}\le\abs{D_P}=\delta(P)  $, giving~$ \Delta(T)=\delta(P) $.
\end{proof}

Let us see an example showing that we cannot guarantee to find $ T $ with maximum degree less than three even if all regular profiles of $ S $ have $ \delta(P)\le 2 $:

\begin{EX}\label{ex:degree}
	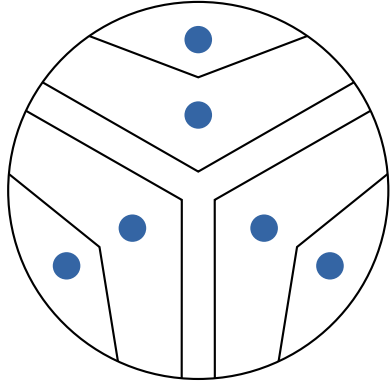
\begin{figure}[ht]
        \centering
        \begin{tikzpicture}[point/.style={circle,fill=skyblue},sep/.style={thick}]
            \draw[thick] (0,0) circle [radius=2.5cm];
            \foreach \angle in {-30, -150, -270} {
                \draw (\angle:1cm) node[point] {};
                \draw (\angle:2cm) node[point] {};
                \draw[sep] (\angle - 55 : 2.5cm) -- (\angle: .25cm) -- (\angle + 55: 2.5cm);
                \draw[sep] (\angle - 35 : 2.5cm) -- (\angle: 1.5cm) -- (\angle + 35: 2.5cm);
            }
        \end{tikzpicture}
		\caption{A ground-set and system of bipartitions.}
		\label{fig:degree}
	\end{figure}
	Let~$ V $ consist of the six points in~\cref{fig:degree}, and~$ S $ be the separation system given by the six outlined bipartitions of~$ V $ together with~$ \menge{\es,V} $. (That is, $\vS$ contains $(A,B)$ and $(B,A)$ for each of these bipartions $\{A,B\}$. We have $(A,B)\leq (C,D) :\Leftrightarrow A \subseteq C$, and $(A,B)^* = (B,A)$. Compare \cite{TangleTreeGraphsMatroids}.) The regular profiles of~$ S $ correspond precisely to the six elements of~$ V $: each~$ v\in V $ induces a profile of~$ S $ by orienting each bipartition towards~$ v $, and conversely each profile of~$ S $ is of this form. Each profile~$ P $ has at most two maximal elements, giving~$ \delta(P)\le 2 $. However, every tree of tangles for~$ S $ must contain the outer three bipartitions and hence have a maximum degree of at least three.
\end{EX}

\section{Efficient distinguishers}\label{sec:efficiency}
Often our structurally submodular separation system $\vS$ is actually an $\vS_k$, the set of all separations of order less than $k$, of some submodular universe $\vU$.
In this case we are not just interested in a nested set of separations which distinguishes all profiles, but one which does so \emph{efficiently}, that is, for any two profiles it contains a distinguishing separation of minimum possible order.
In this section we are going to see how this can be achieved for regular profiles of a fixed $\vS_k$
utilising the duality theorem together with a separate application of its core mechanism: shifting $S$-trees.

We will prove this theorem:
\begin{THM}\label{thm:efficient}
    Let $\vU$ be a submodular universe and let $\cP$ be a set of regular profiles of~$\vS_k$.
    Then there exists a nested set $N \sub S_k$ efficiently distinguishing all the profiles in~$\cP$.
\end{THM}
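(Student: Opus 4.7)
The approach mirrors the direct proof of \cref{thm:ToTviaTTD} but refines its choice of $\cF$ to build in efficiency. For $P, P' \in \cP$ let $\kappa(P, P')$ denote the minimum order of a separation distinguishing them. I would take $\cF \sub 2^{\vS_k}$ to be the set of stars $\sigma$ satisfying two conditions: at most one profile in $\cP$ includes $\sigma$; and if $\sigma \sub P$ for some (necessarily unique) $P \in \cP$, then each $\vs \in \sigma$ with $\sv \in P'$ for some $P' \in \cP$ satisfies $\abs{\vs} = \kappa(P, P')$. Any $S_k$-tree over such an $\cF$ immediately yields a nested set efficiently distinguishing $\cP$, so the plan reduces to applying \cref{thm:duality} to this $\cF$.

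To feed $\cF$ into the duality theorem we need $\cF$ to be standard for $\vS_k$, $\vS_k$ to be $\cF$-separable, and there to be no $\cF$-tangle. Standardness is immediate since cosmall singletons lie in no regular profile. For $\cF$-separability, given eligible $\vr_1 \le \rv_2$ I would take $\vs_0$ to be a splice for $\vr_1$ with $\vr_1 \le \vs_0 \le \rv_2$: by \cref{lem:splice_shift} such a $\vs_0$ emulates $\vr_1$ in $\vS_k$, and crucially, shifts by $\vs_0$ cannot increase orders. Combined with \cref{lem:shift} (which says that shifts of stars inside a profile remain inside that profile), this should guarantee that the shift of any $\sigma \in \cF$ lies in $\cF$: orders only go down, so each new separation has order at most $\kappa$ for the pair it originally witnessed, and since any distinguisher furnishes an upper bound on $\kappa$ for the pair it currently distinguishes, equality is forced.

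The main obstacle is showing that no $\cF$-tangle exists. By \cref{lem:non_profiles}, any non-profile consistent orientation of $\vS_k$ already contains a star in $\cF$; the new task is to establish an efficient analogue of \cref{lem:profile-star}, namely that every $P \in \cP$ contains a star of efficient distinguishers in the above sense. I would prove this by starting from a set $D_P \sub P$ consisting of one efficient distinguisher from $P$ to each other $P' \in \cP$, and then uncrossing crossing pairs as in the proof of \cref{lem:profile-star}, using submodularity of $\vU$ to choose a side of each uncrossing that lies in $\vS_k$. The delicate point, which is where I expect the bulk of the work to lie, is that when we replace a separation by a corner the corner may distinguish $P$ from a different profile than the original did; one must verify that this new pair is nonetheless efficiently distinguished by the corner, again using that every distinguisher supplies an upper bound on the relevant~$\kappa$ and keeping careful track of which profiles the inverses of the corner separations end up in throughout the uncrossing process.
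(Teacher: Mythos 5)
Your choice of $\cF$ is genuinely different from the paper's, and the observation driving it is correct: with your stronger condition on stars contained in a profile, \emph{any} $S_k$-tree over $\cF$ immediately gives an efficient distinguishing set, because the star $\sigma_P=\alpha(t_P)\subseteq P$ at $P$'s node contains the separation $\vr$ pointing towards $t_Q$, and $\rv\in Q$, so your condition directly forces $\abs{\vr}=\kappa(P,Q)$, even when $t_P$ and $t_Q$ are not adjacent. The paper deliberately does not take this route: it works with the weaker condition $\linkedto(P)$ (which only compares $\vs\in\sigma$ against separations of $P$ above $\vs$, not against $\kappa$ for every other profile), and as a consequence an $S_k$-tree over $\cF_e$ is \emph{not} automatically efficient. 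The paper therefore needs a substantial extra argument — among all $S_k$-trees over $\cF_e$, take one that efficiently distinguishes as many pairs as possible and show it can always be improved if some pair is missed — which is entirely absent from your proposal.

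The gap in your approach is the $\cF$-separability of your stronger $\cF$, and it is exactly the pitfall the paper flags just after \cref{cor:efficient_closed}. Your squeeze argument ("orders only go down \dots\ equality is forced") implicitly assumes that a shifted separation distinguishes the \emph{same} pairs that its preimage did, so that the upper bound $\abs{\vz}\le\abs{\vy}=\kappa(P,P')$ and the lower bound $\abs{\vz}\ge\kappa(P,P')$ refer to the same pair. For the ``outward'' elements $\vz=\vy\meet\sv_0$ this is safe: $\zv\ge\yv$, so if $\zv\in P'$ then consistency of $P'$ forces $\yv\in P'$, and the bounds meet. But for the ``inward'' element $\vz=\vx\join\vs_0$ we have $\zv=\xv\meet\sv_0\le\xv$, and consistency does \emph{not} force $\xv\in P'$ from $\zv\in P'$; it is entirely possible that $\vx\in P'$ while $\zv\in P'$. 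Then $z$ newly distinguishes $P$ from $P'$, and the only upper bound you control is $\abs{\vz}\le\abs{\vx}$, which bears no relation to $\kappa(P,P')$ because $x$ does not distinguish $P$ from $P'$. So you cannot conclude $\abs{\vz}=\kappa(P,P')$, and your $\cF$ need not be closed under shifting. The paper explicitly comments that such an approach — strengthening $\cF$ so that an $S_k$-tree over it is directly efficient — seems blocked, because the correction ultimately needed is a global ``one of two shifts works'' argument that shift-closure, being a universally quantified condition on each individual shift, cannot express. Your proposal would need either a proof that your $\cF$ really is shift-closed (I don't see one, and suspect it's false) or an additional argument of the paper's kind, which you have not included.
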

Our approach is similar to the one of the direct proof in \cref{sec:finite_merging_submodular}, but we shall restrict our set of stars so that they do not interfere with efficiency.

Consider a nested set of separations which distinguishes all profiles efficiently and, subject to this, is $\subseteq$-minimal.
Every profile $P$ induces an orientation of this set, and the maximal elements of this orientation form a star.
The separations in this star are, in a way, `well connected' to the profile.
We make this a condition on the stars we consider.
\newcommand{\linkedto}{\operatorname{Eff}}
For a star $\sigma$ and a profile $P$, we say that \emph{$\sigma$ has the property $\linkedto(P)$} if the following holds:
\begin{equation*}
    \nexists\, \vs\in\sigma\text{ and }\vs'\in P \colon \vs \le \vs' \text{ and } \abs{s'} < \abs{s}. \tag{$\linkedto(P)$}\label[property]{prop:prof_eff}
\end{equation*}
\newcommand{\propertyEff}[1]{\hyperref[prop:prof_eff]{property $\linkedto(#1)$}}
\newcommand{\Eff}[1]{\hyperref[prop:prof_eff]{$\linkedto(#1)$}}
This condition ensures that, for two profiles $P$ and $P'$, a star $\sigma$ with \cref{prop:prof_eff} containing~$\vs$, and a star $\sigma'$ with \propertyEff{P'} containing $\sv$, the separation $\vs$ needs to be an efficient $P$--$P'$-distinguisher. For if $ \vs $ is not efficient, consider an efficient $P$--$P'$-distinguisher $\vr\in P$. Then $\vr$ cannot be nested with $\vs$, since $\vs\le\vr$ would contradict \cref{prop:prof_eff} whereas $\vr\le\vs$ would contradict \propertyEff{P'}. But $\vr$ cannot cross $\vs$ either: if it did we would have either $\abs{\vr\join \vs}<\abs{\vs}$ or $\abs{\vr\meet\vs}<\abs{\vs}$ by submodularity, again contradicting \cref{prop:prof_eff} or \Eff{P'}, respectively.

\Cref{prop:prof_eff} is preserved under taking shifts:

\begin{LEM}\label{lem:efficient_shift}
    Let $\vs\in\vS_k$ be a splice for $\vr\in\vS_k$ and let $\sigma \subseteq \vS_k$ be a star with some $\vx \in \sigma$ with $\vx \geq \vr$.
    If a profile $P$ contains both $\sigma$ and $\sigma' \coloneqq \sigma_{\vx}^{\vs}$ and $\sigma$ has \cref{prop:prof_eff} then also $\sigma'$ has \cref{prop:prof_eff}.
\end{LEM}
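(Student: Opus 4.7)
The plan is to prove the contrapositive: suppose $\sigma'$ fails $\linkedto(P)$, so there are $\vz\in P$ and an element $\vt\in\sigma'$ with $\vt\le\vz$ and $|\vz|<|\vt|$. I will derive a contradiction to $\linkedto(P)$ for $\sigma$, splitting on the two kinds of element that make up $\sigma'=\{\vx\join\vs\}\cup\{\vy\meet\sv \mid \vy\in\sigma\sm\{\vx\}\}$.

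The case $\vt=\vx\join\vs$ is immediate: since $\vx\ge\vr$, \cref{lem:splice_shift} gives $|\vx\join\vs|\le|\vx|$, so $\vx\le\vz$ with $|\vz|<|\vx|$, contradicting $\linkedto(P)$ for $\sigma$ via $\vx$.

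The main case is $\vt=\vy\meet\sv$ for some $\vy\in\sigma\sm\{\vx\}$. Here the aim is to produce a witness $\vz\join\vy\in P$ with $\vy\le\vz\join\vy$ and $|\vz\join\vy|<|\vy|$, contradicting $\linkedto(P)$ for $\sigma$ via $\vy$. The crucial geometric fact is $\yv\ge\vx\ge\vr$, which is available because $\sigma$ is a star and $\vx\ne\vy$. The main obstacle is to bound $|\vz\meet\vy|$ from below; for this I plan to establish a small shifted-splice variant of \cref{lem:splice_shift}, namely that every $\vu$ with $\yv\le\vu\le\yv\join\vs$ satisfies $|\vu|\ge|\yv\join\vs|$. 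The argument will mirror \cref{lem:splice_shift}: from $\vr\le\vu\meet\vs\le\vs$ and the splice property of $\vs$ for $\vr$ one obtains $|\vu\meet\vs|\ge|\vs|$, and submodularity together with $\vu\join\vs=\yv\join\vs$ then yields the claim.

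Dualising $\vy\meet\sv\le\vz$ gives $\zv\le\yv\join\vs$, so $\zv\join\yv$ lies in $[\yv,\yv\join\vs]$, and the shifted-splice variant yields $|\vz\meet\vy|=|\zv\join\yv|\ge|\yv\join\vs|=|\vy\meet\sv|$. Combined with $|\vz|<|\vy\meet\sv|$ and submodularity for $\vz$ and $\vy$, this gives
\[
    |\vz\join\vy|\le|\vz|+|\vy|-|\vz\meet\vy|<|\vy|\,.
\]
Thus $\vz\join\vy\in\vS_k$, and since the profile property applied to $\vz,\vy\in P$ excludes $(\vz\join\vy)^{*}=\zv\meet\yv$ from $P$, we conclude $\vz\join\vy\in P$, which delivers the desired contradiction.
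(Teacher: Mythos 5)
Your proof is correct, and it takes a genuinely different route from the paper's. The paper argues by contradiction and, before extracting the contradiction, first performs a reduction on the violating witness~$\vt'\in P$: it splits on whether~$\vt'$ is above, crossing, or below~$\vt$, using the profile property and~\cref{lem:splice_shift} to replace~$\vt'$ by a smaller witness until it may assume~$\vt'\le\vt$, and only then derives~$|\vt\meet\sv|\le|\vt'|$ from the splice property. You instead bypass the case analysis entirely by isolating a clean ``shifted-splice'' fact -- that~$\yv\join\vs$ has minimal order in the interval~$[\yv,\yv\join\vs]$, proved by the same one-line argument as~\cref{lem:splice_shift} -- and then use it to bound~$|\vz\meet\vy|$ from below; submodularity and the profile property directly produce the new witness~$\vz\join\vy\in P$ violating~$\linkedto(P)$ at~$\vy$. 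This is a bit tidier, and it also makes the (easy, but in the paper implicit) case of the apex element~$\vx\join\vs$ explicit. The two proofs use the same ingredients -- submodularity, the splice property, the profile property -- but structure the argument differently: the paper normalises the hypothesis, you construct the conclusion.
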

\begin{proof}
    Suppose for a contradiction that $\sigma'$ does not have \cref{prop:prof_eff}, that is,
    above some $\vt \meet \sv \in \sigma'$, where $\vt \in \sigma$, there is a separation $\vt'\in P$ of lower order than $\vt\meet\sv$.

    We will first show that we may assume $\vt' \leq \vt$.
    Since $\vs$ is a splice for $\vr$ we have $|\vs \meet \tv| \ge |\vs|$, and thus by submodularity
    $|\sv \meet \vt| \leq |\vt|$.
    So if $\vt' > \vt$ then this contradicts the assertion that $\sigma$ has \cref{prop:prof_eff}.
    If however $\vt'$ crosses $\vt$ then by the profile property of $P$ and \cref{prop:prof_eff} of $\sigma$ the supremum $\vt'\join \vt$ has at least the order of $\vt$. By submodularity then $\vt' \meet \vt$ has at most the order of $\vt'$.
    This is also a separation in $P$ which is above $\vt\meet\sv$ and of lower order than $\vt\meet\sv$, so we may consider it instead.

    Now, since $\vs$ is a splice for $\vr$ we have that $|\tv' \meet \vs| \geq |\vs|$, so by submodularity $\vt' \meet \sv$ has at most the order of $\vt'$.
    But this $\vt' \meet \sv$ is the same as $\vt \meet \vs$ since $\vt \geq \vt' \geq \vt \meet \sv$.
    So we have $|\vt \meet \vs| \leq |\vt'|$, which contradicts the assumption that $|\vt'| < |\vt \meet \vs|$.
\end{proof}
We define $\cF_e$ as the set of all stars $\sigma \subseteq \vS_k$ which are contained in at most one profile in $\cP$ and which, if they are contained in a profile $P\in \cP$, fulfil \cref{prop:prof_eff}. 

From \cref{lem:shift,lem:efficient_shift} immediately we obtain the following corollary:

\begin{COR}\label{cor:efficient_closed}
    $S_k$ is $\cF_e$-separable.
\end{COR}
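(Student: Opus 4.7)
The plan is to use the same splice construction that establishes plain separability in \cref{lem:Sk_separable}, and to check that with a little more work it also yields the emulation-for-$\cF_e$ property. Given nontrivial nondegenerate $\vr_1,\vr_2\in\vS_k$ with $\vr_1\le\rv_2$, I would pick $\vs_0$ to be a splice between $\vr_1$ and $\rv_2$. Then $\vs_0$ is a splice for $\vr_1$ directly by definition, and a short dualisation argument shows that $\sv_0$ is a splice for $\vr_2$: any $\vt$ with $\vr_2\le\vt\le\sv_0$ gives $\vr_1\le\vs_0\le\tv\le\rv_2$, so by the splice-between property $\abs{\vt}=\abs{\tv}\ge\abs{\vs_0}=\abs{\sv_0}$. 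Hence \cref{lem:splice_shift} already says that $\vs_0$ emulates $\vr_1$ and $\sv_0$ emulates $\vr_2$ in $\vS_k$, and the task reduces to strengthening these to emulations for $\cF_e$.

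For the first of these, I would take a star $\sigma\in\cF_e$ with $\rv_1\notin\sigma$ and some $\vx\in\sigma$ with $\vx\ge\vr_1$, and verify that $\sigma'\coloneqq\sigma_{\vx}^{\vs_0}$ again lies in $\cF_e$. Three properties need to be checked. First, $\sigma'\sub\vS_k$: each element of $\sigma'$ is a join of $\vs_0$ with some separation above $\vr_1$, or the inverse of such a join, and by \cref{lem:splice_shift} these joins do not exceed the order of the underlying element of $\sigma$, so they stay below $k$. Second, $\sigma'$ is contained in at most one profile of $\cP$, since by \cref{lem:shift} every regular profile that includes $\sigma'$ also includes $\sigma$, and $\sigma\in\cF_e$ is itself in at most one profile. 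Third, if $\sigma'$ is in fact contained in the unique profile $P\in\cP$ that contains $\sigma$, then $\sigma\sub P$ has \cref{prop:prof_eff} by $\sigma\in\cF_e$, and \cref{lem:efficient_shift} (applied with $\vr=\vr_1$ and $\vs=\vs_0$) yields that $\sigma'$ has \cref{prop:prof_eff} as well. Thus $\sigma'\in\cF_e$ in every case, so $\vs_0$ emulates $\vr_1$ in $\vS_k$ for $\cF_e$; the symmetric argument with $\sv_0$ and $\vr_2$ then finishes the proof.

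Given that \cref{lem:splice_shift,lem:shift,lem:efficient_shift} have packaged away exactly the three ingredients needed to certify membership in $\cF_e$, I do not foresee a real obstacle here; the proof is essentially the observation that these three lemmas fit together seamlessly to upgrade splice-based separability of $\vS_k$ to $\cF_e$-separability.
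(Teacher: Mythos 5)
Your proof is correct and takes essentially the same approach the paper alludes to: the paper gives no proof body for this corollary, only the preceding remark that it follows ``immediately'' from \cref{lem:shift,lem:efficient_shift}, and your write-up is precisely the natural fleshing-out of that claim, using the splice construction from \cref{lem:Sk_separable}/\cref{lem:splice_shift} to get the underlying separability and then invoking \cref{lem:shift} and \cref{lem:efficient_shift} to verify that shifting by a splice preserves each of the three defining properties of $\cF_e$.
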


However, an $S$-tree over $\cF_e$ does not necessarily give rise to an efficient distinguisher set for $\cP$ because we make no assumptions on those stars which are not contained in any profile.
Our proof of \cref{thm:efficient} will need to make additional arguments on why an efficient such tree exists.

It would be much more elegant if we could introduce a condition, similar to \Eff{\cdot}, on the stars which are in no profile, so as to guarantee that any $S_k$-tree over these stars is as desired.
However, all possible such properties that the authors could come up with failed to give $\cF$-separability and
there is reason to believe that such a solution is not possible:
The critical part in the proof of \cref{thm:efficient} will make a global argument, specifically that of two shifts of one separation one is an efficient distinguisher.
Separability on the other hand is defined in terms of each individual shift of a star.

For this section's analogue of \cref{lem:profile-star},
we define the \emph{fatness} of a star $\sigma$ as the tuple $(n_{k-1},n_{k-2},\dots,n_1,n_0)$, where $n_i$ is the number of separations of order $i$ in $\sigma$. We will consider the lexicographic order on the fatness of stars.

\begin{LEM}\label{lem:profile-star-efficient}
   Given a set $\cP$ of regular profiles of $\vS_k$, every profile $P\in \cP$ includes a star in $\cF_e$.
\end{LEM}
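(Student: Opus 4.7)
The plan is to mimic the proof of~\cref{lem:profile-star}, using the lexicographic fatness of~$\sigma$ as the measure to minimise in place of the number of profiles including~$\sigma$. By~\cref{lem:profile-star} the family of stars $\sigma\sub P$ contained in no profile of $\cP$ other than~$P$ is nonempty, and since $\vS_k$ is finite I pick such a $\sigma$ of lexicographically minimum fatness. It then suffices to show that this $\sigma$ automatically satisfies~\propertyEff{P}, for then $\sigma\in\cF_e$.

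Suppose for contradiction that $\sigma$ fails \propertyEff{P}, witnessed by some $\vs\in\sigma$ and $\vs'\in P$ with $\vs\le\vs'$ and $|s'|<|s|$. The strategy is to derive a contradiction by producing a star $\sigma'\sub P$ of strictly smaller fatness that is still contained in no profile of $\cP$ other than~$P$. The natural candidate is the shift $\sigma':=\menge{\vs'}\cup\menge{\vt\meet\sv' : \vt\in\sigma\sm\menge{\vs}}$ of $\sigma$ from $\vs$ to~$\vs'$, which is automatically a star in the universe. Elementwise it has lexicographically smaller fatness than~$\sigma$ (replacing $\vs$ by the lower-order $\vs'$ and each $\vt$ by $\vt\meet\sv'$ of order at most~$|t|$), and it remains contained in no profile other than~$P$ by the same consistency argument as in the proof of~\cref{thm:ToTviaTTD}: any profile $P''\ne P$ containing $\sigma'$ would also contain $\sv$ (otherwise $\sigma\sub P''$), but then $\sv'\le\sv$ together with $\vs'\in P''$ would violate the consistency of~$P''$.

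To complete the argument one needs $\sigma'\sub\vS_k\cap P$. For the order condition I would choose $\vs'$ carefully: first fix some $P$-witness $\vs'_0$ with $|s'_0|<|s|$ above~$\vs$, and then let $\vs'$ be a splice between $\vs$ and~$\vs'_0$; applied to each $\tv\ge\vs$, \cref{lem:splice_shift} then yields $|\vt\meet\sv'|=|\tv\join\vs'|\le|t|<k$, so the shifted corners really do lie in $\vS_k$. For membership of each $\vt\meet\sv'$ in~$P$ I combine the profile property on $\vt,\vs'\in P$ with the consistency and regularity of~$P$ to pin down the correct orientation.

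The main obstacle I expect is the tension between the two requirements on~$\vs'$: the splice property is purely order-theoretic and does not force $\vs'\in P$, whereas the witness of the failure of \propertyEff{P} must lie in~$P$. If the splice between $\vs$ and $\vs'_0$ happens to lie in~$P$ we are done immediately; otherwise a subsidiary argument, analogous to the uncrossing step in the proof of~\cref{lem:profile-star} and using the structural submodularity of~$\vS_k$, will be needed to reconcile the two, likely by iteratively uncrossing~$\vs'$ with the elements of~$\sigma$ it crosses (using the profile property to keep replacements inside~$P$, and a crossing-minimisation choice analogous to the one in~\cref{lem:profile-star}) while preserving minimality of~$|s'|$. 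This is the step where the technical weight of the proof will sit.
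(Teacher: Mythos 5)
Your route is genuinely different from the paper's, and it does work, but you have misplaced where the difficulty lies. The paper argues by a single local uncrossing step: among the witnesses $\vr\in P$ of the failure of \cref{prop:prof_eff}, pick one crossing as few elements of $\sigma$ as possible; if $\vr$ crosses $\vx\in\sigma$, the minimality of the number of crossings (together with the fish \cref{lem:fish}) forces the two corners $\vr\join\vx$ and $\vr\meet\xv$ to have order strictly greater than $|r|$, whence by submodularity $\vr\meet\vx$ and $\rv\meet\vx$ both have order strictly less than $|x|$, and replacing $\vx$ by these two corners lowers the fatness. You instead replace $\vs$ globally by a splice $\vs'$ between $\vs$ and the witness and shift the whole star via \cref{lem:splice_shift}. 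Both produce a star of strictly smaller fatness contained in $P$ alone; the paper's argument is more elementary and local, yours leans on the splice/shift machinery already developed for separability.

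Two corrections to your reasoning. First, the ``main obstacle'' you flag is not one: the splice $\vs'$ between $\vs$ and your witness $\vs'_0$ satisfies $\vs'\le\vs'_0\in P$ and $|s'|\le|s'_0|<k$, so consistency of $P$ (downward closure) already gives $\vs'\in P$. No subsidiary uncrossing argument is needed, and the ``technical weight'' you anticipated there is zero. Second, the step you wave away as ``the same consistency argument'' is precisely where care is needed: from $\vs\in P''$ and $\vt\meet\sv'\in P''$ for all $\vt\in\sigma\sm\menge{\vs}$ you cannot conclude $\vt\in P''$ from consistency alone ($\vt\meet\sv'\le\vt$ goes the wrong way). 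You need the profile property: if $\tv\in P''$, then $\tv$ and $\vs'$ in $P''$ would force $(\vt\meet\sv')\notin P''$, contradicting $\sigma'\sub P''$. Since $\sigma'$ is by construction a shift of $\sigma$ from $\vs$ to $\vs'$, the cleanest fix is simply to cite \cref{lem:shift}, which packages exactly this argument.
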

\begin{proof}
    By \cref{lem:profile-star} $P$ includes a star which is contained only in $P$.
    Take such a star $\sigma$ which has lexicographically minimal fatness and
    suppose for a contradiction that $\sigma$ does not have \cref{prop:prof_eff}.
    So take $\vs \in \sigma$ and $\vr \in P$ with $\vs\le\vr$ and $\abs{r} < \abs{s}$.
    Among the possible choices for $\vr$, let $\vr$ be one which crosses as few separations in $\sigma$ as possible.
    If $\vr$ were nested with $\sigma$ then the maximal elements of $\sigma \cup \menge{\vr}$ would form a star of lower fatness,
    thus we may suppose that $\vr$ crosses some $\vx\in\sigma$.

    By the choice of $\vr$, the corner separations $\vr\join\vx$ and $\vr\meet\xv$ must have strictly higher order than $\abs{r}$ since both are $\geq \vs$.
    Thus, by submodularity, the corner separations $\vr\meet\vx$ and $\rv\meet\vx$ have strictly lower order than $\abs{x}$.
    Now the star $\sigma' \coloneqq \sigma \sm \menge{\vx} \cup \menge{\vr\meet\vx,\, \rv\meet\vx}$ has a lower fatness.
    This star is still contained in $P$ by consistency and in no other profile, since every profile which includes $\sigma'$ also includes $\sigma$ by the profile property applied with $\vx$ and~$\vr$.
    This contradicts the choice of $\sigma$.
\end{proof}

We are now able to prove \cref{thm:efficient}:
\begin{proof}[Proof of \cref{thm:efficient}]
We may apply \cref{thm:duality} for $\cF_e$ since $\vS_k$ is $\cF_e$-separable by \cref{cor:efficient_closed} and $\cF_e$ is standard since cotrivial separations are not contained in any regular profile.
From this theorem we cannot get an $\cF_e$-tangle: such a tangle cannot be a profile in $\cP$ by \cref{lem:profile-star-efficient}, and \cref{lem:non_profiles} states that every consistent orientation which is not a profile in $\cP$ includes a star which is not contained in any profile in $\cP$, but each of these stars is contained in $\cF_e$, so no such orientation is an $\cF_e$-tangle.
So instead, there exists an $S_k$-tree over $\cF_e$.

Among all $S_k$-trees over $\cF_e$ pick an irredundant one, $(T,\alpha)$ say, whose associated separations efficiently distinguishes as many pairs of profiles as possible.
Let us suppose that some pair of profiles $P_1,P_2$ is not distinguished efficiently by this tree.

Consider the nodes $v_{P_1}, v_{P_2}$ of this tree corresponding to $P_1$ and $P_2$. These nodes are distinct, since every star in $\cF_e$ is contained in at most one profile.
Moreover, we can assume without loss of generality, that in no node on the path between $v_{P_1}$ and $v_{P_2}$ there lives a profile $Q$: In that case either the pair $P_1,Q$ or the pair $Q,P_2$ would not be efficiently distinguished by $(T,\alpha)$ either, so we could consider them instead.

Let $\vs_{P_1}$ be the separation associated to the first edge on the path from $v_{P_1}$ to $v_{P_2}$ and let $\vs_{P_2}$ be the separation associated to the first edge on the path from $v_{P_2}$ to $v_{P_1}$.
There exists a separation $t$ which efficiently distinguishes $P_1$ and $P_2$ and is nested with $\vs_{P_1}$ and $\vs_{P_2}$: if $\vt\in P_1$ is not nested with, say $\vs_{P_1}$, we know by \cref{prop:prof_eff} that $\sv_{P_1}\join \vt$ needs to have order at least $|s_{P_1}|$, thus $\sv_{P_1}\meet \vt$ has order at most $|t|$, so it efficiently distinguishes $P_1$ and $P_2$ and is nested with $\vs_P$. Thus by the fish \cref{lem:fish}, there indeed needs to exists such a $t$ which efficiently distinguishes $P_1$ and $P_2$ and is nested with $\vs_{P_1}$ and $\vs_{P_2}$.
Moreover, $t$ has an orientation such that  $\vs_{P_1}\le \vt \le \sv_{P_2}$, otherwise the existence of $t$ again contradicts either \cref{prop:prof_eff} or \Eff{Q}.
Note that $\vt$ thus is a splice between $\vs_{P_1}$ and $\sv_{P_2}$
and therefore $\vt$ emulates $\vs_{P_1}$ for $\cF_e$ and $\tv$ emulates $\vs_{P_2}$ for $\cF_e$. 

Let $T_{P_1}$ be the subtree of $T$ consisting of the component of $T-v_{P_1}$ which contains $v_{P_2}$ together with $v_{P_1}$ and similarly let $T_{P_2}$ be the subtree consisting of the component of $T-v_{P_2}$ containing $v_{P_1}$ together with $v_{P_2}$.

We consider the trees $(T_{P_1}, \alpha_{P_1})$ and $(T_{P_2}, \alpha_{P_2})$ obtained from $(T_{P_1}, \alpha\restricts T_{P_1})$ and $(T_{P_2}, \alpha\restricts T_{P_2})$ by applying the shifts $\shifting{\vs_{P_1}}{\vt}$ and $\shifting{\vs_{P_2}}{\tv}$, respectively. Consider now the tree $(T',\alpha')$ obtained from these two trees by identifying the respective edges associated with~$\vt$. By applying \cref{lem:shift_S-tree} with the two shifted trees the combined tree is again over~$\cF_e$. We may again assume it to be irredundant. We are going to show that it efficiently distinguishes more pairs of profiles than $(T, \alpha)$.

Let $Q_1,Q_2$ be a pair profiles which were efficiently distinguished by a separation $\vr$ associated to an edge of $(T,\alpha)$.
If $\vr$ is not associated to any edge of $(T',\alpha')$ then, without loss of generality, either $\vs_{P_1}\le \vr \le \vs_{P_2}$ or both $\vs_{P_1} \le \vr$ and $\vs_{P_2}\le \vr$.

In the first case $\vr$ distinguishes $P_1$ and $P_2$ and therefore $|\vr| > |\vt|$. By the definition of the shift, our tree $(T',\alpha')$ contains both, $\vr\join \vt$ and $\vr\meet\vt$, and both of them have order at most the order of $\vr$, by \cref{lem:splice_shift}. However, one of $\vr\join \vt, \vr\meet\vt$ and $\vt$ distinguishes $Q_1$ and $Q_2$ and does so efficiently.

In the second case, by the definition of the shift, our tree $(T',\alpha')$ contains both, $\vr\join \vt$ and $\vr\join\tv$, and both of them have order at most the order of $\vr$, again by \cref{lem:splice_shift}. Again, one of $\vr\join \vt$ and $\vr\join\tv$ distinguishes $Q_1$ and $Q_2$ and does so efficiently.

Thus, since $(T',\alpha')$ additionally efficiently distinguishes $P_1$ and $P_2$ with $t$, this contradicts the choice of $(T,\alpha)$.
\end{proof}

\section{Degrees in efficient trees of tangles}\label{sec:efficient_degrees}
In this section we apply our method from \cref{sec:finite_merging_degrees} to \cref{thm:efficient} to obtain a tree of tangles of low degree, but this time one which efficiently distinguishes the profiles.
That is, we are interested in the minimal degrees of a tree of tangles whose associated separations efficiently distinguish all regular profiles of~$S_k$.

Extending the definitions of \cref{sec:finite_merging_degrees}, let us say that a tree of tangles $(T,\alpha)$ for~$S_k$ is \emph{efficient}, if the set of edge labels not only distinguishes all regular profiles of~$S_k$, but does so efficiently.

Given a $k$-profile $P$, we denote by $\delta_e(P)$ the minimal size of a star $\sigma\subseteq P$ with \cref{prop:prof_eff} which distinguishes $P$ from all other regular profiles of $S_k$, i.e., every other regular profile orients some $\vs\in \sigma$ as $\sv$.
Note that, by \cref{lem:profile-star-efficient}, there exists such a star for every regular profile $P$, thus $\delta_e(P)$ is a well defined natural number.

We denote by $\delta_{e,\max}$ the maximum of $\delta_e(P)$ over all regular profiles $P$.

We can give a bound on $\delta_e(P)$ which is not in terms of stars or nested sets:
\begin{LEM} Let $P$ be a regular $k$-profile in $U$ and let $D_P\subseteq P$ be a subset of $P$ which contains, for every regular $k$-profile $P'\neq P$ in $U$, a separation which efficiently distinguishes $P$ from $P'$. Let us denote as $m$ the number of maximal elements of $D_P$. Then $\delta_e(P)\le m$.
\end{LEM}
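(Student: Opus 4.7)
The plan is to construct an explicit star witnessing $\delta_e(P) \leq m$ by uncrossing the maximal elements of $D_P$, exploiting that each element of $D_P$ is an efficient distinguisher.

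First, I would observe that every element of $D_P$ is order-maximal in $P$ in the sense of the singleton version of~\cref{prop:prof_eff}. If $\vr \in D_P$ efficiently distinguishes $P$ from some profile $P'$ and there were $\vr' \in P$ with $\vr \leq \vr'$ and $|r'| < |r|$, then consistency of $P'$ (via the upward-closure argument used throughout~\cref{sec:finite_merging_submodular}) would give $\rv' \in P'$, so $\vr'$ would distinguish $P$ from $P'$ more efficiently than $\vr$, contradicting efficiency.

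Let $M$ be the set of maximal elements of $D_P$, so $|M| = m$. Then $M$ distinguishes $P$ from every other regular profile: for each $P' \neq P$ the given efficient distinguisher $\vr \in D_P$ with $\rv \in P'$ lies below some $\vt \in M$, and $\tv \in P'$ follows by the same upward-closure argument. Each element of $M$ is therefore itself an efficient distinguisher of $P$ from at least one other profile.

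Next I would uncross $M$ into a star, using the structural submodularity of $\vS_k$ as in the proofs of~\cref{thm:ToTviaTTD,thm:Degree}. Crucially, I would choose each uncrossing direction so that the new separation has order at most that of one of the originals: if $\vr, \vs \in M$ cross, submodularity gives $|\vr \meet \sv| + |\rv \meet \vs| \leq |r| + |s|$, so at least one of these corners has order at most $\max(|r|, |s|)$, and we uncross in that direction. I claim that after such an uncrossing -- say, one replacing $\vr$ by $\vr \meet \sv$ -- the new separation is again an efficient distinguisher: if $\vr$ was efficient for some $P_r$, then $\rv \join \sv \geq \rv \in P_r$ and upward closure give $\rv \join \sv \in P_r$, so $\vr \meet \sv$ distinguishes $P$ from $P_r$; efficiency of $\vr$ together with $|\vr \meet \sv| \leq |r|$ then forces equality of orders. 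Iterating over the uncrossing steps, every element of the resulting star $\sigma$ remains an efficient distinguisher and hence, by the first paragraph, order-maximal in $P$. Therefore $\sigma$ has~\cref{prop:prof_eff}, $|\sigma| \leq |M| = m$, and $\sigma$ is included in $P$ but in no other regular profile by~\cref{lem:uncrossing}, giving $\delta_e(P) \leq m$.

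The main obstacle is precisely the careful choice of uncrossing direction. If one uncrossed in the ``wrong'' direction the order of the new corner could exceed both $|r|$ and $|s|$, breaking both the order bound and the efficient-distinguisher property. The submodularity dichotomy above is what guarantees that a good direction always exists, and verifying that this good direction really preserves the efficient-distinguisher status through the iteration is the key technical step.
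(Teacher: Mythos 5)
Your overall plan differs structurally from the paper's: the paper reduces to a $D_P$ that simultaneously minimises $|\max D_P|$ and is $\le$-minimal in its maximal elements, then derives a contradiction from a single crossing pair; you instead try to directly uncross $M = \max D_P$. Your first two paragraphs (order-maximality of each element of $D_P$, and that $M$ distinguishes $P$ from all other profiles) are correct. The gap is in the uncrossing step.

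There are three problems in that step. First, ``upward closure'' does not hold for profiles: consistency gives that if $\vb\in P_r$ and $\va\le\vb$ then $\va\in P_r$, i.e.\ a form of \emph{downward} closure; from $\rv\in P_r$ and $\rv\join\vs\ge\rv$ one cannot conclude $\rv\join\vs\in P_r$. (Also note the inverse of $\vr\meet\sv$ is $\rv\join\vs$, not $\rv\join\sv$.) The legitimate route, the profile property applied to $\rv$ and $\vs$, requires knowing that $\vs\in P_r$; but $P_r$ may well contain $\sv$, in which case consistency gives $\vr\meet\sv\in P_r$, so $\vr\meet\sv$ does \emph{not} distinguish $P$ from $P_r$ and your chain breaks. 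Second, the submodularity inequality $|\vr\meet\sv|+|\rv\meet\vs|\le|r|+|s|$ only gives that at least one corner has order $\le\max(|r|,|s|)$, yet two sentences later you use ``$|\vr\meet\sv|\le|r|$''. That stronger (and necessary, since $\vr\meet\sv\le\vr\in P$ would otherwise itself violate $\linkedto(P)$) bound is also obtainable from the same inequality -- if both $|\vr\meet\sv|>|r|$ and $|\rv\meet\vs|>|s|$ the sum would exceed $|r|+|s|$ -- but you derived and stated the weaker, wrong version. Third, even with those two points repaired, establishing that the uncrossed corner is still an efficient distinguisher (and hence order-maximal) requires precisely the kind of case analysis that the paper runs, which in turn leans on the extra minimality assumptions you dropped: the paper first ensures $|\vs\join\vt|>|t|$ via $m$-minimality, concludes $|\vs\meet\vt|<|s|$, and uses that to pin down which way $P_s$ orients the corners. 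Without that bootstrapping you cannot rule out that the new corner fails $\linkedto(P)$, so the final ``hence order-maximal'' does not follow.
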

\begin{proof}
It is enough to consider a set $D_P\subseteq P$ such that $m = \abs{\max D_P}$ is as small as possible.
Moreover, we may assume without loss of generality that every element of $D_P$ distinguishes $P$ efficiently from some other profile in $\cP$, since we could otherwise remove it from $D_P$.
We may furthermore assume that, subject to all this, $D_P$ is chosen so that $\max D_P$ is $\le$-minimal.
Furthermore we may suppose that, for separations $\vr \leq \vs$ in $D_P$, the order of $\vr$ is lower than the order of $\vs$, since otherwise we could just remove $\vr$ from~$D_P$.

If the maximal separations in $D_P$ are pairwise nested, they satisfy \cref{prop:prof_eff} by the fact that they distinguish $P$ efficiently from some other profile $P'$.
Further, every profile $P'$ is distinguished from $P$ by some maximal separation in $D_P$: there is an efficient $P$-$P'$ distinguisher $\vs\in D_P$ and thus a maximal separation $\vt \ge \vs$ in $D_P$ also distinguishes $P$ from $P'$.
Hence, if the maximal elements of $D_P$ are pairwise nested, they are a candidate for $\delta_e(P)$ and therefore witness that $\delta_e(P)\le m$.

So suppose that this is not the case, so two maximal separations $\vs,\vt\in D_P$ cross and, without loss of generality, $|\vs|\le |\vt|$.
By the definition of $D_P$, there is a profile $P_s$ which is efficiently distinguished from $P$ by $\vs\in D_P$. Similarly, there is such a profile $P_t$ for $\vt$.

Since $D_P$ was chosen to have as few maximal elements as possible, the separation $\vs\join \vt$ has greater order than $t$: otherwise we could, by consistency and the profile property, replace $\vt$ in $D_P$ by $\vs\join \vt$.
Thus, by submodularity, the order of $\vs\meet \vt$ is less than the order of $\vs$. In particular, by efficiency of $s$ and $t$, neither $P_s$ nor $P_t$ contains $(\vs \meet\vt)^* = \sv\join \tv$.

Thus $\vs\meet \tv$ and $\sv\meet\vt$ have order precisely $|\vs|$ and $|\vt|$, respectively: if one of them had lower order this would, by the profile property, contradict the fact that $s$ or $t$, respectively, efficiently distinguishes $P$ from $P_s$ or $P_t$, respectively.
This means that, in particular, $\vs\meet\tv$ efficiently distinguishes $P$ from $P_s$.

For every $\vr \le \vs$ in $D_P$ we have assumed $|\vr| < |\vs|$. Both $\vr\meet\vt$ and $\vr\meet\tv$ have at most the order of $\vr$ due to submodularity, the efficiency of $\vt$, the profile property and consistency, analogue to the above.

Let us consider the set $D_P'$ obtained from $D_P$ by removing all $\vr\le\vs$, and adding $\vs\meet\tv$ as well as, for every $\vr\le\vs$, any $\vr\meet\vt$ and $\vr\meet \tv$ which efficiently distinguishes $P$ from some other profile.
By the above, this set $D_P'$ distinguishes $P$ from every other regular profile, and is a candidate for $D_P$.
The maximal separations of $D_P'$ and of $D_P$ are the same except that $\vs$ in $D_P$ is replaced by $\vs\meet\tv$ in $D_P'$.
This contradicts the choice of $D_P$ with $\le$-minimal maximal elements.
\end{proof}

To limit the degree of the node of $P$ in our tree of tangles we want to remove from $\cF_e$ all the stars which are contained in $P$ but are larger than $\delta_e(P)$.
In order to achieve a maximum degree of $\delta_{e,\max}$ we also need to limit the size of the stars in $\cF_e$ which are contained in no profile to $\delta_{e,\max}$.
As in \cref{sec:finite_merging_degrees}, we cannot limit the maximum degrees below 3.
Along the lines of the proof of \cref{lem:uncrossing}, the next lemma shows that we can find, in every consistent orientation $O$ of $\vS_k$ which is not a profile, a star of size $3$ contained in $O$ and in no profile.

\begin{LEM}\label{lem:profiles_stars}
Every consistent orientation $O$ of $\vS_k$ which is not a profile contains a star $\sigma$ of size $3$ which is not contained in any profile.
\end{LEM}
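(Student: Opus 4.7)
The plan is to follow the uncrossing strategy of \cref{lem:uncrossing}, applied to a profile triple that witnesses the failure of the profile property in $O$. Since $O$ is not a profile, there exist $\vr, \vs \in O$ with $\rv \meet \sv \in O$; hence the triple $\{\vr, \vs, \rv \meet \sv\}$ lies in $O$ and, by the profile property applied to $\vr$ and $\vs$, is contained in no profile. In general $\vr$ and $\vs$ may cross, so this triple need not itself be a star.

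First I would invoke submodularity of $\vU$: since $|\vr \meet \sv| + |\rv \meet \vs| \le |\vr| + |\vs| < 2k$, at least one of the corner separations $\vr \meet \sv$, $\rv \meet \vs$ lies in $\vS_k$, and by a symmetric choice between $\vr$ and $\vs$ I may assume $\vr \meet \sv \in \vS_k$. Consistency of $O$ then forces $\vr \meet \sv \in O$: the opposite orientation $\rv \join \vs = (\vr \meet \sv)^*$ satisfies $\rv \le \rv \join \vs$, which together with $\vr \in O$ would contradict consistency. I would then set $\sigma \coloneqq \{\vr \meet \sv,\, \vs,\, \rv \meet \sv\}$; this is a star since $\vr \meet \sv \le \sv$ and both $\vr \meet \sv, \vs \le \vr \join \vs = (\rv \meet \sv)^*$, and all three elements lie in $O$.

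Next I would show that $\sigma$ is contained in no profile. Suppose some profile $P$ satisfies $\sigma \subseteq P$. If $\rv \in P$, then the profile property applied to $\rv, \vs \in P$ yields $\vr \meet \sv \notin P$, contradicting $\vr \meet \sv \in \sigma \subseteq P$. So $\vr \in P$, whence $\{\vr, \vs, \rv \meet \sv\} \subseteq P$, violating the profile property on $\vr$ and $\vs$ once more. The main subtlety will be ensuring that $|\sigma| = 3$: the equalities $\vr \meet \sv = \vs$ or $\vs = \rv \meet \sv$ would force $\vs \le \vr$ or $\vs \le \rv$ together with $\vs \le \sv$, collapsing $\rv \meet \sv$ onto $\rv$ or $\sv$ and contradicting the antisymmetry of $O$ (or forcing $\vs$ to be trivial, a case that can be avoided by choosing the witnessing pair $(\vr, \vs)$ minimally under $|\vr| + |\vs|$), while the remaining coincidence $\vr \meet \sv = \rv \meet \sv$ reduces to a degeneracy of $\vs$ that is excluded by the standing nondegeneracy assumptions on $\vU$.
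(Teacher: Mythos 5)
Your construction is exactly the paper's: take $\vr,\vs\in O$ witnessing the failure of the profile property, use submodularity to bring one of the corners $\vr\meet\sv$ or $\rv\meet\vs$ into $\vS_k$, and form the star $\{\vr\meet\sv,\,\vs,\,\rv\meet\sv\}$, which no profile can contain by the profile property applied to $\vs$ together with whichever orientation of $r$ that profile contains. Your verifications of submodularity and consistency spell out steps the paper leaves implicit, and your closing remark correctly flags that the paper's claim of size \emph{exactly} $3$ requires ruling out coincidences among the three elements — a subtlety the paper glosses over, though harmless for the application in \cref{thm:efficient_degrees}, which only needs an upper bound of $3$.
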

\begin{proof}
As $O$ is not a profile, there are $\vs,\vt\in O$ such that $\sv\meet\tv\in O$. By submodularity, either $\vs\meet\tv$ or $\sv\meet \vt\in \vS$, let us suppose the former one. Then $\sigma=\{\vs\meet\tv, \vt,\sv\meet\tv\}$ is a star in $O$ and $\sigma$ cannot be contained in any profile: any profile $P$ needs to contain either $\vs$ or $\sv$, and the profile property implies that $P$ then cannot contain both, $\vs\meet\tv$ and $\sv\meet\tv$.
\end{proof}

We can now show the following variant of \cref{thm:efficient}, which shows that we can find a tree of tangles of bounded degree:
\begin{THM}\label{thm:efficient_degrees}
    Let $\vU$ be a submodular universe and let $\cP$ be the set of regular profiles of~$\vS_k$.
    Then there exists tree of tangles $ (T,\alpha) $ such that, for every profile $P\in \cP$, the {degree of~$ P $ in~$ (T,\alpha) $} is $\delta_e(P)$ and the maximal degree of $T$ is at most $\max\{\delta_e(\cP),3\}$.
\end{THM}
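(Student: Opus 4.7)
The plan is to combine the efficiency strategy of \cref{thm:efficient} with the degree-bounding refinement of \cref{thm:Degree} by imposing size restrictions directly on our set of stars. Concretely, I would define
\[
    \cF_e' \coloneqq \{\sigma \subseteq \vS_k \text{ a star} : \sigma \text{ satisfies (i) or (ii)}\},
\]
where (i) says that $\sigma$ is contained in no profile in $\cP$ and $|\sigma| \leq \max\{\delta_{e,\max}, 3\}$, and (ii) says that $\sigma$ is contained in exactly one profile $P \in \cP$, has \cref{prop:prof_eff}, and satisfies $|\sigma| \leq \delta_e(P)$. The point is that an $\vS_k$-tree over $\cF_e'$ automatically has the desired degree profile: the star at a profile node $v_P$ must be in (ii) (since it is contained in $P$) and is forced to have size exactly $\delta_e(P)$ by the definition of $\delta_e(P)$ combined with the upper bound from (ii); and every other star lies in (i), giving the global degree bound.

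To apply \cref{thm:duality}, I would verify the three preconditions for $\cF_e'$ in turn. Standardness is immediate since cotrivial singletons have size one and lie in no regular profile. For $\cF_e'$-separability I would reuse splices exactly as in \cref{cor:efficient_closed}: if $\vs_0$ is a splice for $\vr_1$ and $\vx \in \sigma$ with $\vx \geq \vr_1$, then the shift $\sigma_{\vx}^{\vs_0}$ has size at most $|\sigma|$, is included in at most one profile by \cref{lem:shift}, and retains \cref{prop:prof_eff} (when still contained in a profile) by \cref{lem:efficient_shift}. Absence of $\cF_e'$-tangles splits into two cases for a consistent orientation $O$: if $O$ is not a profile, \cref{lem:profiles_stars} yields a star of size three contained in no profile, witnessing (i); if $O$ is a regular profile $P$, the definition of $\delta_e(P)$ supplies a star of size $\delta_e(P)$ in $P$ satisfying \cref{prop:prof_eff}, witnessing (ii). An existence of such a star, and thus well-definedness of $\delta_e(P)$, is guaranteed by \cref{lem:profile-star-efficient}.

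Having obtained an (irredundant) $\vS_k$-tree over $\cF_e'$, I would now re-run the maximality argument from the proof of \cref{thm:efficient}: among all such trees pick one efficiently distinguishing as many pairs of profiles as possible, and if some pair $P_1, P_2$ is not efficiently distinguished, shift the two subtrees $T_{P_1}, T_{P_2}$ along the splice $\vt$ between $\vs_{P_1}$ and $\sv_{P_2}$ and glue them to strictly improve on the count, as in \cref{thm:efficient}. The crucial point to check is that each star in the two shifted subtrees is again in $\cF_e'$ (not merely in $\cF_e$): this follows again from the fact that shifts do not increase star sizes, preserve the "included in at most one profile" condition via \cref{lem:shift}, and preserve \cref{prop:prof_eff} via \cref{lem:efficient_shift}, because $\vt$ is a splice for $\vs_{P_1}$ and $\tv$ is a splice for $\vs_{P_2}$.

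The main obstacle is verifying this preservation of $\cF_e'$ under all the shifts involved — both the ones needed for separability and the ones needed in the maximality step — since now we must simultaneously control size, inclusion in profiles, and \cref{prop:prof_eff}. The three ingredients listed above handle all three conditions, but care is required at the leaf of the shifted tree where \cref{lem:shift_S-tree} produces a star $\{\sv_0\}$ of size one; after the gluing, this leaf becomes an internal node whose star is the image of the original star at the corresponding profile node under the shift, so the size argument and the three lemmas above apply in the combined tree. Once efficiency has been secured, the degree analysis sketched in the first paragraph finishes the proof.
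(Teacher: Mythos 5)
Your proposal is correct and takes essentially the same route as the paper: the paper defines $\cF_e^s$ as the subset of $\cF_e$ consisting of stars of size $\delta_e(P)$ included in a profile $P$ together with stars of size at most $\max\{\delta_e(\cP),3\}$ included in no profile, checks $\cF_e^s$-separability via the size monotonicity of shifts plus \cref{lem:shift,lem:efficient_shift}, rules out $\cF_e^s$-tangles via \cref{lem:profile-star-efficient,lem:profiles_stars}, and then re-runs the shift-and-glue optimisation from \cref{thm:efficient} noting that shifts keep stars inside $\cF_e^s$. Your $\cF_e'$ with the bound $|\sigma|\le\delta_e(P)$ is the same set as the paper's (exact-size-$\delta_e(P)$), since any star in $\cF_e$ contained only in $P$ and satisfying $\linkedto(P)$ already distinguishes $P$ from all other regular profiles and hence has size at least $\delta_e(P)$.
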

\begin{proof}
 Let $\cF_e^s$ be the subset of $\cF_e$ consisting of, for every profile $P$, all stars from $\cF_e$ of size $\delta_e(P)$ contained in $P$, together with all stars of size at most $\max\{\delta_e(\cP),3\}$ from $\cF_e$ not contained in any profile. For any star $\sigma$ and any shift $\sigma_\vs^\vr$ of $\sigma$ we have $\abs{\sigma}\ge \abs{\sigma_\vs^\vr}$. Further, $S_k$ is $\cF_e$-separable by \cref{cor:efficient_closed}. Moreover, the shift of a star cannot contain any profile which does not contain the original star by \cref{lem:shift}, thus $S_k$ is also $\cF_e^s$-separable.
 
 Thus, all we need to show is that applying \cref{thm:duality} cannot result in an $\cF_e^s$-tangle, the rest of the proof can then be carried out as the proof of \cref{thm:efficient}:
 Instead of $S$-trees over $\cF_e$ we now consider $S$-trees over $\cF_e^s$, and observe that the shifting argument in the proof of \cref{thm:efficient} again shifts stars in $\cF_e^s$ to stars in $\cF_e^s$.
 
  However, applying \cref{thm:duality} indeed cannot result in an $\cF_e^s$-tangle: Such a tangle cannot be a regular profile, since by our definition of $\delta_e(P)$, there is a star in $\cF_e^s$ contained in $P$. But every consistent orientation which is not a regular profile either contains a star $\{\vs\}$ for a cosmall separation $\vs$ -- each such star is also contained in $\cF_e$ -- or contains, by \cref{lem:profiles_stars} a star of size $3$ not contained in any profile. Either such star is also contained in $\cF_e^s$ by definition.
\end{proof}

\section{Tangles of mixed orders}\label{sec:different_order}
In this section we would like to use the ideas from \cref{sec:efficiency} to obtain a proof of \cref{thm:tot_hundermark} using tangle-tree duality. The challenge of \cref{thm:tot_hundermark} compared to \cref{thm:efficient} is that the set of profiles $\cP$ considered in \cref{thm:tot_hundermark} consists of profiles of different orders. In particular, there might be profiles $P_1$ and $P_2$ in $\cP$ which are efficiently distinguished by separations of order $k$, say, and there might be another profile $Q\in \cP$ which has only order $l<k$ and thus does not orient the separations which efficiently distinguish $P_1$ and $P_2$. Thus, we cannot simply require the stars in our set $\cF$ to be contained in at most one profile: the resulting $S$-tree over $\cF$ would not necessarily distinguish all profiles in $\cP$, for example it might not distinguish the profiles $P_1$ and $Q$ from above. Our solution to this problem will be to restrict the set of stars further by additionally requiring that all the separations in a star in $\cF$ `could be oriented' by every profile in $\cP$, even if that profile has lower order than the separation considered.

With this further restricted set of stars however $S$ will no longer be $\cF$-separable, but it will only fail to do so under rather specific circumstances. Thus in order to obtain a result in the fashion of \cref{thm:tot_hundermark}, we shall first proof a slightly stronger version of \cref{thm:duality}, which allows us to exclude this specific situation im the requirement of $\cF$-separability. The proof of this stronger version of \cref{thm:duality} is different from the original one of \cref{thm:duality} in \cite{TangleTreeAbstract}: our proof of \cref{thm:modifiedTTD} also serves as an alternative proof of \cref{thm:duality} which is slightly shorter than the original one and perhaps neater and less technical.

\subsection{A Short Adventure into Duality}\label{subsec:duality}
As mentioned above, we will prove the following slight strengthening of \cref{thm:duality}:

\begin{THM}\label{thm:modifiedTTD}
    Let $ U $ be a finite universe, $ S\sub U $ a separation system, and $ \cF\sub 2^{\vS} $ a set of stars such that $ \cF $ is standard for $ S $ and $ S $ is critically~$ \cF $-separable. Then precisely one of the following holds:
    \begin{itemize}
        \item there is an $ S $-tree over $ \cF $;
        \item there is an $ \cF $-tangle of $ S $.
    \end{itemize}
\end{THM}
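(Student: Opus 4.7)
The plan is to prove Theorem~\ref{thm:modifiedTTD} by induction on $|S|$, in a way that simultaneously yields a cleaner proof of Theorem~\ref{thm:duality}; the role of critical $\cF$-separability is precisely that the pairs $(\vr_1,\vr_2)$ at which we need an emulator are exactly the pairs that arise during the inductive shift-and-glue step, and no others.

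First I would dispose of the easy direction: the two alternatives are mutually exclusive. Given an $S$-tree $(T,\alpha)$ over $\cF$ and any consistent orientation $O$ of $S$, the orientation $O$ orients each edge of $T$, and the resulting orientation of $\vE(T)$ has a sink $t\in V(T)$ (since $T$ is finite). Consistency of $O$ gives $\alpha(t)\subseteq O$, so $O$ contains the star $\alpha(t)\in\cF$ and cannot be an $\cF$-tangle.

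For the substantive direction, assume no $\cF$-tangle of $S$ exists and construct an $S$-tree over $\cF$. The base case is the one in which every non-forced separation of $S$ is trivial or degenerate: since $\cF$ is standard for $S$, the single-vertex $S$-tree over $\cF$ then does the job. For the inductive step, I would pick a nontrivial, nondegenerate, non-forced separation $\vr\in\vS$ that is $\le$-minimal with these properties; such an $\vr$ exists because $S$ itself has no $\cF$-tangle. Critical $\cF$-separability, applied to the pair $(\vr,\rv)$ (which is exactly of the shape the definition is designed to accommodate), supplies an $\vs_0\in\vS$ with $\vr\le\vs_0\le\rv^*$ such that $\vs_0$ emulates $\vr$ in $S$ for $\cF$ and $\sv_0$ emulates $\rv$ in $S$ for $\cF$. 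I would then look at the two strictly smaller subsystems obtained by contracting the orientations above $\vr$ and above $\rv$. If either admits an $\cF$-tangle, that tangle combines with the appropriate orientation of $r$ to give an $\cF$-tangle of $S$, contradicting our assumption; otherwise the inductive hypothesis produces an $S$-tree over $\cF$ on each side. Applying Lemma~\ref{lem:shift_S-tree} to shift each of these trees by $\vs_0$ and $\sv_0$ respectively, and then gluing along a new edge labelled $\vr/\rv$, yields the required $S$-tree over $\cF$ of the original system; irredundancy can be enforced via Lemma~\ref{lem:TreeSets_6.2}.

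The main obstacle will be verifying that the pair $(\vr,\rv)$ arising in the induction is always ``critical'' in the specific sense of the weakened definition, so that critical $\cF$-separability genuinely suffices where the original proof invoked full $\cF$-separability. Concretely, I would argue that in the inductive step we only ever need emulators for pairs $(\vr_1,\vr_2)$ in which both $\vr_1$ and $\vr_2$ are extendable to a consistent orientation that avoids $\cF$ on all \emph{strictly smaller} stars, since otherwise the inductive hypothesis immediately hands us an $S$-tree on one side and we are done without invoking separability. Isolating this ``critical'' class of pairs and showing that the gluing step never introduces a star outside $\cF$ at the new node (this follows from the definition of emulation for $\cF$ together with $\vs_0$ being sandwiched between $\vr$ and $\rv^*$) is where the genuinely new content of Theorem~\ref{thm:modifiedTTD} over Theorem~\ref{thm:duality} sits, and it is also what makes the proof shorter and less technical than the original.
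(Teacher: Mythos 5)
Your approach is genuinely different from the paper's. The paper does \emph{not} induct on $|S|$: instead it introduces the notion of an \emph{$S$-tree attempt} (an $S$-tree whose internal nodes all carry stars in $\cF$, but whose leaves need not) and of a \emph{petal} (an incoming leaf label $\vr$ with $\{\vr\}\notin\cF$). It then collects the set $\vL$ of all petals of all $S$-tree attempts, picks a down-closed subset $P\subseteq\vL$ that hits every attempt and is $\subseteq$-minimal with that property, and shows that $P$ is an $\cF$-tangle by contradiction: if $P$ contained some $\vr\neq\vs$ with $\rv\le\vs$ (both maximal in $P$), then two attempts having only $\vr$ resp.\ $\vs$ as their $P$-petal can be shifted (via Lemma~\ref{lem:shift_S-tree}) and glued into a new attempt that $P$ misses. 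This is the Mazoit-style argument the authors advertise as a departure from the original Diestel--Oum induction, and it is precisely where $\cF$-criticality of $\rv$ and $\sv$ \emph{falls out for free}: $\rv$ lies in the star at the neighbour of the $\vr$-leaf, and if some $\sigma'\in\cF$ had $\sigma'\cap r=\{\vr\}$ one could extend the attempt at that leaf, producing a petal above $\vr$ in $P$ and contradicting the maximality of $\vr$.

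The gap in your proposal is exactly at the point you flag as the ``main obstacle'', and your proposed fix does not close it. You choose $\vr$ to be $\le$-minimal among nontrivial, nondegenerate, non-forced separations and then invoke critical $\cF$-separability on the pair $(\vr,\rv)$. But critical $\cF$-separability only promises an emulator when \emph{both} ends of the pair are $\cF$-critical, and $\cF$-criticality is a concrete combinatorial condition: $\vr$ must lie in some star of $\cF$, and no star $\sigma'\in\cF$ may have $\sigma'\cap r=\{\rv\}$. Nothing about $\le$-minimality (or non-forcedness) forces your $\vr$ to lie in \emph{any} star of $\cF$ at all, so $\vr$ need not be $\cF$-critical, and you have no licence to invoke the hypothesis. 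Your replacement criterion --- that $\vr_1,\vr_2$ be ``extendable to a consistent orientation avoiding $\cF$ on strictly smaller stars'' --- is simply a different notion, with no supplied argument connecting it to the definition of $\cF$-critical; if you wanted to weaken full $\cF$-separability down to exactly that class of pairs you would be proving a different theorem. (There is also the secondary issue that the pair $(\vr,\rv)$ forces $\vs_0=\vr$ because $\vr\le\vs_0\le\vr$, so this particular application of separability is vacuous; and the inductive step --- ``contracting the orientations above $\vr$'' and recombining any resulting $\cF$-tangle with an orientation of $r$ --- glosses over the consistency and emulation bookkeeping that the Diestel--Oum induction needs several pages for.) In short: the inductive route is not wrong in spirit, but to make critical $\cF$-separability suffice you must arrange the induction so that the separations you need to separate are \emph{provably} $\cF$-critical, and that is exactly what the paper's petal construction accomplishes and your sketch does not.
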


This theorem is a strengthening in the sense that, 
we weaken the technical assumption that~$ S $ be~$ \cF $-separable to only require~$ \cF $-separability for those separations whose inverse lies in no star of~$ \cF $, rather than for all separations in~$ \vS $. Formally:

A separation~$ \vr $ in~$ \vS $ is~\emph{$ \cF $-critical} if~$ \vr\in\sigma $ for some~$ \sigma\in\cF $, but there is no~$ \sigma'\in\cF $ with~$ {\sigma'\cap r=\menge{\rv}} $. Observe that if~$ \vr\in\vS $ is~$ \cF $-critical then~$ \vr $ is nondegenerate and not forced by~$ \cF $, and in particular~$ \vr $ is nontrivial in~$ S $ since~$ \cF $ is standard for~$ S $. We say that~$ S $ is~\emph{critically~$ \cF $-separable} if for all~$ \cF $-critical~$ \vr,\rv'\in\vS $ with~$ \vr\le\vr' $ there exists an~$ s_0\in S $ with an orientation~$ \vs_0 $ that emulates~$ \vr $ in~$ \vS $ for~$ \cF $ and such that~$ \sv_0 $ emulates~$ \rv' $ in~$ \vS $ for~$ \cF $. Clearly, if~$ S $ is~$ \cF $-separable, then~$ S $ is critically~$ \cF $-separable.

The core argument of our proof of \cref{thm:modifiedTTD} is the following: if there is no~$ S $-tree over~$ \cF $, then every `attempt' at such an~$ S $-tree must fail. Thus, if one starts with some star~$ \sigma $ in~$ \cF $ as the basis for such an~$ S $-tree, and then `glues' for each~$ \vs\in\sigma $ with~$ \menge{\sv}\notin\cF $ some star from~$ \cF $ onto~$ \vs $ that contains~$ \sv $, then one must at some point be unable to find such a star. The resulting attempt is then an~$ S $-tree that is `over~$ \cF $' only for all internal vertices, but not necessarily at the leaves. The fundamental strategy of the proof presented here is to collect the set of all these leaf-separations at which the~$ S $-tree attempts get stuck, and then turn this set into the basis of an~$ \cF $-tangle.

The strategy described here is already present in Mazoit's proof~(\cite{MazoitTWDuality}) of the classical duality theorem for brambles and tree-width in graphs. In~\cite{DiestelBook16noEE} Diestel gives a proof of this graph-theoretic duality theorem that is derived from his and Oum's original proof of~\cref{thm:duality}, applied to the specific~$ \cF $ corresponding to tree decompositions of a certain width. Curiously Mazoit's and Diestel's graph-theoretic proofs are quite similar. One could therefore argue that the strategy for the proofs here comes from re-translating Diestel's translation of~\cref{thm:duality} to this specific graph application back into abstract separation systems.

Let us now make this sketch of a proof formal:

\begin{proof}[Proof of~\cref{thm:modifiedTTD}.]
Let~$ U $ be a finite universe,~$ S\sub U $ a separation system, and~$ \cF\sub 2^{\vS} $ a set of stars such that~$ \cF $ is standard for~$ S $ and that $ S $ is critically~$ \cF $-separable.. We may assume that~$ \es\notin\cF $.

We shall need the following definitions. An {\em $S$-tree attempt} is an~$S$-tree~$(T,\alpha)$ with at least one edge and~$ \alpha(t)\in\cF $ for every internal node~$ t $ of~$ T $. For a leaf~$ t $ of an~$ S $-tree attempt~$ (T,\alpha) $ the~\emph{incoming label} of~$ t $ is the separation~$ \vr $ for which~$ \menge{\vr} $ is associated with~$ t $ in~$ (T,\alpha) $; the~\emph{outgoing label} of~$ t $ is then~$ \rv $. We call such an~$ \vr $ a~\emph{petal} of~$ (T,\alpha) $ if~$ \menge{\vr}\notin\cF $.

We will prove the following assertion which is equivalent to the tangle-tree duality theorem:

\[ 
	\textit{$S$ has an~$\cF$-tangle if and only if every~$S$-tree attempt has a petal.}\tag{$\ast$}\label{prop:petals}
\]

Since the~$ S $-tree attempts without petals are exactly the $ S $-trees that are over~$ \cF $,~\eqref{prop:petals} immediately implies~\cref{thm:duality}.

For our proof of~\eqref{prop:petals} we will use~\cref{lem:TreeSets_6.2,lem:TreeSets_6.3,lem:TreeSets_6.4,lem:TreeSets_6.5}. These lemmas, roughly speaking, say that an~$ S $-tree over a set of stars may be assumed to be `cleaned up', i.e. tight and irredundant; that such a cleaned up~$ S $-tree~$ (T,\alpha) $ is order-respecting; and that then each nontrivial separation can only appear once as a label. In short, cleaning up a given~$ S $-tree over stars enables us to apply~\cref{lem:shift_S-tree} to it. 

	The `only if' direction of~\eqref{prop:petals} is clear, so let us show the backward direction.
	
	Let~$\vL\sub\vS $ be the set of all petals of~$ S $-tree attempts. Suppose first that we can find~$P\sub\vL$ such that~$P$ is a consistent and antisymmetric set that contains at least one petal of every~$S$-tree attempt. Then~$ P $ is an~$ \cF $-tangle of~$ S $: Indeed,~$ P $ orients each separation~$ s $ in~$ S $ since it contains a petal of the~$ S $-tree attempt that is just a single edge labelled with~$ s $; and~$ P $ avoids each star~$ \sigma $ in~$ \cF $ since it is antisymmetric and contains a petal of the~$ S $-tree attempt consisting of one internal node for~$ \sigma $ and one leaf for every element of~$ \sigma $.
	
	So let us show that we can find such a set~$P\sub\vL$. For this pick a~$P\sub\vL$ that is minimal with respect to inclusion subject to the conditions that~$ P $ contains at least one petal of every~$S$-tree attempt and is down-closed in~$ \vL $, that is, such that~$ \vp\in P $ for all~$ \vp\in\vL $ with $ \vp\le\vq $ for some~$ \vq\in P $. Such a~$P$ exists since~$\vL$ itself is a candidate. We claim that this~$P$ is antisymmetric and consistent.
	
	So suppose that~$ P $ is not antisymmetric, or not consistent. Then there are~$ \vr\ne\vs $ in~$ P $ with~$ \rv\le\vs $. In particular we can take~$ \vr $ and~$ \vs $ to be maximal elements of~$ P $. Neither~$ \vr $ nor~$ \vs $ can be co-trivial in~$ S $ since~$ \cF $ is standard for~$ S $ and~$ \vr $ and~$ \vs $ are petals. Therefore neither of the two can be trivial or degenerate either, since this would imply that the other one is co-trivial.
	
	By picking~$ \vr $ and~$ \vs $ among the maximal elements of~$ P $ we ensure that both~$ P\sm\menge{\vr} $ and~$ P\sm\menge{\vs} $ are still down-closed in~$ \vL $. Thus, by the minimality of~$ P $, there are~$ S $-tree attempts~$ (T_r,\alpha_r) $ and~$ (T_s,\alpha_s) $ whose only petals that lie in~$ P $ are~$ \vr $ and~$ \vs $, respectively. We may assume~$ (T_r,\alpha_r) $ and~$ (T_s,\alpha_s) $ to be tight and irredundant by \cref{lem:TreeSets_6.2,lem:TreeSets_6.5}, which implies that~$ \vr $ and~$ \vs $ are the incoming label of exactly one leaf of~$ T_r $ and~$ T_s $, respectively, by \cref{lem:TreeSets_6.4}.
	
	We claim that~$ \rv $ is~$ \cF $-critical. To see this, let~$ v_r $ be the leaf of~$ T_r $ whose incoming edge is labelled with~$ \vr $, and let~$ w_r $ be the neighbour of~$ v_r $ in~$ T_r $. Then~$ w_r $ has an incoming edge labelled with~$ \rv $, and we must have~$ \alpha_r(w_r)\in\cF $, witnessing that~$ \rv $ lies in some star in~$ \cF $: for if~$ \alpha_r(w_r)\notin\cF $ then~$ w_r $ would be a leaf of~$ T_r $ and~$ \rv $ a petal of~$ (T_r,\alpha_r) $. By~$ \rv\le\vs $ and~$ P $ being down-closed in~$ \vL $ we would then have~$ \rv\in P $, contrary to the assumption that~$ \vr $ is the only petal of~$ (T_r,\alpha_r) $ which lies in~$ P $. Suppose now that~$ \sigma\cap \menge{\vr,\rv}=\menge{\vr} $ for some~$ \sigma\in\cF $. Then we can extend~$ (T_r,\alpha_r) $ by~$ \sigma $ at the leaf at which~$ \vr $ appears. This extension of~$ (T_r,\alpha_r) $ is then an~$ S $-tree attempt, of which~$ P $ must contain a petal. Since~$ \vr\notin\sigma $ this petal would be strictly larger than~$ \vr $, contradicting the maximality of~$ \vr $ in~$ P $.
	
	A similar argument shows that~$ \sv $ is~$ \cF $-critical. By the assumption that~$ S $ is critically~$ \cF $-separable we thus find an~$ s_0\in S $ with an orientation~$ \vs_0 $ that emulates~$ \rv $ in~$ \vS $ for~$ \cF $ and such that~$ \sv_0 $ emulates~$ \sv $ in~$ \vS $ for~$ \cF $. Let~$ v_r $ and~$ v_s $ be the leaves of~$ T_r $ and~$ T_s $ with incoming labels~$ \vr $ and~$ \vs $, respectively. Set~$ \alpha_r'\coloneqq(\alpha_r)_{v_r,\vs_0} $ and~$ \alpha_s'\coloneqq(\alpha_s)_{v_s,\sv_0} $. Then~\cref{lem:shift_S-tree} says that~$ (T_r,\alpha_r') $ and~$ (T_s,\alpha_s') $ are~$ S $-tree attempts in which~$ v_r $ is the unique leaf of~$ T_r $ with incoming label~$ \sv_0 $, and that~$ v_s $ is the unique leaf of~$ T_s $ with incoming label~$ \vs_0 $. Let~$ (T,\alpha) $ be the~$ S $-tree obtained from~$ (T_r,\alpha_r') $ and~$ (T_s,\alpha_s') $ by identifying~$ v_r\in T_r $ with the neighbour of~$ v_s $ in~$ T_s $ and vice-versa, and extending the maps~$ \alpha_r' $ and~$ \alpha_s' $ accordingly.
	
	This~$ (T,\alpha) $, too, is an~$ S $-tree attempt since every internal node of $(T,\alpha)$ corresponds to an internal node of ~$ (T_r,\alpha_r') $ or~$ (T_s,\alpha_s') $.
	
	We claim that~$ P $, contrary to its definition, contains no petal of~$ (T,\alpha) $. To see this we consider the leaves of $(T,\alpha)$ and note that by definition every leaf of~$ T $ corresponds to either a leaf of~$ T_r $ other than~$ v_r $, or to a leaf of~$ T_s $ other than~$ v_s $. Thus, let us first consider some leaf $t$ of~$ T_r $ other than~$ v_r $, and let~$ \vp $ be the incoming label of~$ t $ in~$ (T_r,\alpha_r) $. Then~$ {\rv\le\vp} $ by \cref{lem:TreeSets_6.3}, and thus the incoming label of~$ t $ in~$ (T,\alpha) $ is~$ \vs_0\join\vp $. If~$ \alpha_r(t)=\menge{\vp}\in\cF $ then~$ \alpha(t)=\menge{\vs_0\join\vp}\in\cF $ since~$ \vs_0 $ emulates~$ \rv $ in~$ \vS $ for~$ \cF $. Consequently, if~$ \vs_0\join\vp $ is a petal of~$ (T,\alpha) $, then~$ \vp $ is a petal of~$ (T_r,\alpha_r) $. In particular, since~$ \vp\ne\vr $ and~$ \vr $ is the only petal of~$ P $ from~$ (T_r,\alpha_r) $, and~$ P $ is down-closed in~$ \vL $, we know that~$ (\vs_0\join\vp)\ge\vp $ cannot be a petal of~$ (T,\alpha) $ that lies in~$ P $.
	
	By the same argument those leaves of~$ T $ which are leaves of~$ T_s $ other than~$ v_s $ cannot give rise to a petal of~$ (T,\alpha) $ in~$ P $, either. Hence~$ P $ contains no petal from~$ (T,\alpha) $, causing a contradiction. This finishes the proof that~$ P $ is consistent and antisymmetric and hence an~$ \cF $-tangle of~$ S $.
\end{proof}

\subsection{Obtaining a tree-of-tangles theorem for different order tangles from tangle-tree duality}\label{subsec:different_order}

\Cref{thm:modifiedTTD} now allows us to use our methods from \cref{thm:efficient} to prove a tree-of-tangles theorem for different order tangles. More specifically we will obtain a result similar to \cref{thm:tot_hundermark}, however
our construction only works in distributive universes -- that is, $\vr \join (\vs \meet \vt) = (\vr \join \vs) \meet (\vr \join \vs)$, always -- since we need the following result from \cite{EberenzMaster}, which can be also found in \cite{ProfilesNew}:

\begin{LEM}[\cite{ProfilesNew}*{Theorem 3.11}, \cite{EberenzMaster}*{Theorem 1}, strong profile property]\label{lem:distributive} Let $\vU$ be a distributive universe and $\vS \subseteq \vU$ structurally submodular, then
for any profile $P$ of $S$ and any $\vr$ and $\vs\in P$ there does not exists any $\vt\in P$ such that $\rv\join \sv\le \vt$.
\end{LEM}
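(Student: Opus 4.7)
The plan is to argue by contradiction. Suppose $\vr, \vs, \vt \in P$ satisfy $\rv \join \sv \le \vt$, equivalently $\tv \le \vr$ and $\tv \le \vs$. The first reduction is purely formal: applying consistency of $P$ to the pair $\vr, \vt \in P$ with $\tv \le \vr$ forces the unoriented separations $r$ and $t$ to coincide, and symmetrically $s = t$; antisymmetry of $P$ then yields $\vr = \vs = \vt$ as oriented separations, and the assumed inequality degenerates to $\rv \le \vr$, so $\vr$ is a cosmall separation lying in $P$.

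The substantive content of the lemma is thus to exclude cosmall separations from profiles in distributive universes. For this I would pick any $\vq \in P$ with $q \ne r$ (such a $\vq$ exists outside trivial boundary cases, which can be handled directly) and invoke the distributive identity
\begin{equation*}
    \vq \join (\vr \meet \rv) = (\vq \join \vr) \meet (\vq \join \rv).
\end{equation*}
Since $\vr$ is cosmall we have $\vr \meet \rv = \vr$, so this simplifies to $\vq \join \vr = (\vq \join \vr) \meet (\vq \join \rv)$, giving $\vq \join \vr \le \vq \join \rv$. Structural submodularity of $\vS$ guarantees that at least one of $\vq \join \vr, \vq \meet \vr$ lies in $\vS$, and similarly for the pair $\vq, \rv$; the profile property applied to $\vq, \vr \in P$ then places those corners which are in $\vS$ into $P$ on a prescribed side.

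The distributive inequality $\vq \join \vr \le \vq \join \rv$ now transfers these prescribed orientations from one corner to another in a way incompatible with consistency: in each branch of the case analysis one ultimately exhibits an unoriented separation $q'$ with both $\vq'$ and $\qv'$ forced into $P$, contradicting antisymmetry. The main obstacle will be the clean case analysis based on which corner separations actually lie in $\vS$, since structural submodularity furnishes only one out of each pair and the distributive identity must be applied in several guises. It is precisely because distributivity produces equalities (rather than mere inequalities) between the corner separations that the propagation of orientations becomes rigid enough to force the desired contradiction, explaining why distributivity is the essential hypothesis of the lemma.
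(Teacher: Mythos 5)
Your ``first reduction'' is formally correct, but it should have been a red flag: it shows that the hypotheses $\vr,\vs,\vt\in P$ and $\rv\join\sv\le\vt$ force $r=s=t$ and $\vr$ cosmall, so the lemma as printed collapses to the claim that a profile contains no cosmall separation. That claim is \emph{false} for profiles in general --- take the two-element chain $\vU=\{\vr,\rv\}$ with $\vr<\rv$, a distributive universe with $\vS=\vU$ structurally submodular; then $P=\{\rv\}$ is a profile containing the cosmall $\rv$ --- and it is trivially true by definition for \emph{regular} profiles, requiring neither distributivity nor submodularity. This strongly indicates that the statement carries a typo and should read $\rv\meet\sv\le\vt$, i.e.\ $\vt\ge(\vr\join\vs)^*$: that is the genuine strengthening of the profile property that the name ``strong profile property'' suggests, and it is the form the paper's later applications in \cref{lem:critical_seps,lem:critical_shift} actually need. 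With $\meet$ in place of $\join$, consistency no longer collapses the hypotheses, and the statement has real content about separations above the ``bad'' corner $(\vr\join\vs)^*$ even when that corner itself lies outside $\vS$.

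Independently of the above, the computation you give in the cosmall case is wrong: cosmall means $\rv\le\vr$, so $\vr\meet\rv=\rv$, not $\vr$ as you write. After this correction, the distributive identity you invoke yields only $\vq\join\rv\le\vq\join\vr$, which is nothing but monotonicity of $\join$ applied to $\rv\le\vr$ and gives no contradiction --- nor could it, since the underlying claim is false. The remaining ``clean case analysis'' is not supplied and cannot be assessed. The paper itself does not prove this lemma but cites external sources, so there is no in-paper argument to compare against; in any event a correct proof has to engage with the $\rv\meet\sv\le\vt$ version, where the interplay of distributivity, structural submodularity, and the profile property is genuinely needed, and your approach does not do so.
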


Moreover, our method will not allow us to distinguish all robust profiles, instead we need a slight strengthening of robustness:
We say that a $k$-profile $P$ is \emph{strongly robust}, if for any $\vs\in P$ and $\vr\in \vU$ where $\vs\join \vr$ and $\vs\join \rv$ both have at most the order of $\vs$ one of $\vs\join \vr$ and $\vs\join \rv$ is in $P$. Note that most instances of tangles, for example tangles in graphs, are strongly robust profiles.

For this section let $\vU$ be a distributive submodular universe and let $\cP$ be some set of pairwise distinguishable strongly robust profiles in $\vU$ (possibly of different order).

To handle the issue, that not all separations in a tree-of-tangles for profiles of different orders are oriented by all the considered profiles, we introduce the following additional definition: A consistent orientation $O$ of $S_k$ \emph{weakly orients a separation $s$ as $\vs$} if $O$ contains a separation $\vr$ such that $\vs\le \vr$. If we want to omit $s$ we just say $O$ \emph{weakly contains $\vs$}.

We will now only consider stars of separations where every separation is at least weakly oriented by all the profiles in $\cP$.
Specifically, we work with the set $\cF_d$ consisting of all stars $\sigma$ with the following properties:
\begin{enumerate}
 \item There exists at most one profile $P\in \cP$ such that $\sigma\subseteq P$.
 \item For every profile $P\in \cP$ such that $\sigma\not\subseteq P$ there exists $\vs\in \sigma$ such that $P$ weakly orients $s$ as $\sv$.
 \item If there exists a $P\in \cP$ such that $\sigma\subseteq P$, then $\sigma$ satisfies \cref{prop:prof_eff}.\label{shift:weakly_orients}
\end{enumerate}

We want to show that $U$ is critically $\cF_d$-separable, and our first step to do so is to show that splices -- which we want to use in separability -- are weakly oriented by every profile in $\cP$.
\begin{LEM}\label{lem:critical_seps}
Let $\vU$ be a distributive submodular universe and let $\cP$ be a set of strongly robust profiles in $\vU$.
Suppose that $\vr$ and $\vs$ are $\cF_d$-critical separations in $\vU$ with $\vr \le \sv$, then
every splice between $\vr$ and $\sv$ is weakly oriented by every profile in $\cP$.
\end{LEM}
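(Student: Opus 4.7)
The plan is to show, for an arbitrary profile $P \in \cP$, that $P$ contains some separation above $\vt$ or above $\tv$. I proceed in three steps.

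Step 1 (weak containment of $\vr$ and $\vs$). Because $\vr$ is $\cF_d$-critical, some star $\sigma_r \in \cF_d$ contains $\vr$. Either $\sigma_r \subseteq P$, in which case $\vr$ itself witnesses that $P$ contains some $\vp \ge \vr$, or by property~(2) in the definition of $\cF_d$ there is some $\vx \in \sigma_r$ weakly oriented by $P$ as $\xv$, i.e.\ $\xv \le \vp$ for some $\vp \in P$; the star property $\vx \le \rv$ then forces $\vr \le \xv \le \vp$. Either way $P$ contains some $\vp \ge \vr$, and by the symmetric argument some $\vq \ge \vs$.

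Step 2 (splice inequalities). The minimality of $|\vt|$ over the interval $[\vr, \sv]$ implies that $\vt$ is a splice for $\vr$ and, by a symmetric argument, $\tv$ is a splice for $\vs$: a separation~$\vt'$ violating the first would satisfy $\vr \le \vt' \le \vt \le \sv$, contradicting the minimality of~$\vt$. Applying \cref{lem:splice_shift} twice we obtain $|\vp \join \vt| \le |\vp| < k$ and $|\vq \join \tv| \le |\vq| < k$, so both separations lie in $S_k$ and are thus oriented by $P$.

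Step 3 (producing a witness). If $\vp \join \vt \in P$ then $\vp \join \vt \ge \vt$ witnesses that $P$ weakly orients $t$ as $\vt$, and analogously $\vq \join \tv \in P$ witnesses that $P$ weakly orients $t$ as $\tv$. Otherwise $\pv \meet \tv$ and $\qv \meet \vt$ both lie in $P$, and one has to derive a contradiction. Here the distributivity of~$\vU$ enters: combining the splice inequalities $|\vp \meet \sv| \ge |\vt|$ and $|\vq \meet \rv| \ge |\vt|$ (both valid because $\vp \meet \sv$ and $\vq \meet \rv$ lie in $[\vr,\sv]$, respectively have inverse in $[\vr,\sv]$) with submodularity lets us verify the order preconditions for strong robustness, and the profile property applied to the pairs $\{\vp,\qv \meet \vt\}$ and $\{\vq,\pv \meet \tv\}$ together with the distributive expansion
\[(\vp \join \vt) \meet (\vq \join \tv) = (\vp \meet \vq) \join (\vp \meet \tv) \join (\vq \meet \vt) \join (\vt \meet \tv)\]
forces one of $\vp \join \vt$, $\vq \join \tv$ into $P$ after all.

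The main obstacle is precisely this last step. The precondition $|\vp \join \tv| \le |\vp|$ required to apply strong robustness to the pair $\vp,\vt$ directly does not follow from the splice inequalities alone; one genuinely needs the distributive structure of $\vU$ and the strong-profile property from \cref{lem:distributive} to rule out the remaining configuration. Everything else---the criticality argument of Step~1 and the splice inequalities of Step~2---is routine once one observes that $\vt$ being a splice between~$\vr$ and~$\sv$ makes it a splice for~$\vr$ in the sense of \cref{lem:splice_shift}.
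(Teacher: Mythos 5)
Your Steps~1 and~2 are fine and match the paper's opening moves: criticality gives some star of $\cF_d$ containing $\vr$ and $\vs$, so every profile weakly orients both, and the observation that $\vt$ is a splice for $\vr$ (and $\tv$ a splice for $\vs$) together with \cref{lem:splice_shift} gives $\abs{\vp\join\vt}\le\abs{\vp}$ and $\abs{\vq\join\tv}\le\abs{\vq}$. The gap is entirely in Step~3, and I do not think it can be closed as sketched.

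The problem is twofold. First, you take \emph{arbitrary} $\vp\ge\vr$ and $\vq\ge\vs$ in $P$, but the paper's argument hinges on a very specific choice: it picks $\vw_r$ of minimal order among the elements of $P$ above $\vr$ and, subject to that, $\le$-maximal, and likewise $\vw_s$. Only with this extremal choice does the strong-robustness contradiction go through, because when one finds a new separation in $P$ of no larger order and strictly above $\vw_r$, this violates maximality. With an arbitrary $\vp$ there is nothing to contradict. Second, and more seriously, you never use the full strength of $\cF_d$-criticality of $\vs$. Step~1 only uses that $\vs$ appears in \emph{some} star of $\cF_d$; the statement that there is \emph{no} star $\sigma'\in\cF_d$ with $\sigma'\cap s=\menge{\sv}$ is never invoked. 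But this is precisely what the paper needs to kill the case where $\vw_r$ crosses $\vs$: there it constructs the star $\menge{\sv,\,\vs\meet\vw_r,\,\vs\meet\wv_r}$, shows it lies in $\cF_d$, and derives the contradiction from criticality of $\vs$. Your proposal has no mechanism to handle this crossing configuration, and the distributive identity you write down, while correct, does not by itself force one of $\vp\join\vt,\vq\join\tv$ into $P$: the profile property only tells you $(\vp\join\vt)\meet(\vq\join\tv)\notin P$, and you would still have to show this separation is oriented by $P$ and that its inverse being in $P$ leads somewhere, neither of which you establish. The inequality $\abs{\vp\join\tv}\le\abs{\vp}$ that you flag as the obstacle genuinely fails for arbitrary $\vp\ge\vr$ --- it would require $\abs{\vp\meet\tv}\ge\abs{\vt}$, and $\vp\meet\tv$ is not in the interval $[\vr,\sv]$ so the splice property gives no bound --- and saying that ``distributivity and the strong-profile property rule out the remaining configuration'' is an assertion, not an argument. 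To repair this you would essentially need to reproduce the paper's structure: replace $\vp,\vq$ by the extremal $\vw_r,\vw_s$, reduce to $\vw_r=\vw_s$ in the nested case, and invoke criticality of $\vs$ to dispatch the crossing case.
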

\begin{proof}
Since $\vr$ and $\vs$ are $\cF_d$-critical, they are contained in some star in $\cF_d$ and hence weakly oriented by every profile in $\cP$.

Let $t$ be a splice between $\vr$ and $\sv$.
If $t$ is not weakly oriented by every profile in $\cP$, then $\cP$ contains a profile $P$ of order at most $|t|$ which weakly orients $r$ as $\vr$ and $s$ as $\vs$, since every witnessing separation that a profile weakly orients $r$ as $\rv$ or $s$ as $\sv$ also witnesses that it weakly orients $t$.
Let $M_r^P$ be the set of all separations $\vw_r$ in $P$ satisfying $\vr\le \vw_r$ and having minimal possible order with that property.
Let $\vw_r\in M_r^P$ be chosen $\le$-maximally.
Let $\vw_s$ be defined for $s$, accordingly.

Observe that if $\vw_r\le \sv$, respectively, then, by the order-minimality of $M_r^P$, the order of $\vw_r$ is at least $|t|$ so $P$ orients $t$, which contradicts the assumption that $P$ does not weakly orient $t$.
Similarly, $\vw_s\le \rv$ results in a contradiction.

Suppose now that $\vw_r$ crosses $\vs$. 

\begin{figure}[h!]
  \centering
  \begin{tikzpicture}
      \draw[red] (-2, -2) -- (-2, 2) node[above] {$\vr$} node[pos=0.5](mid) {} (mid.center)[->] -- +(.5, 0);
      \draw[red] (2, -2) -- (2, 2) node[above] {$\vs$} node[pos=0.5](mid) {} (mid.center)[->] -- +(-.5, 0);
      \draw[gray] (0, -1.5) -- (0, 1.5) node[above] {$t$};
      \draw[black] (3, -1) -- (-1, 2) node[above] {$\vw_r$} node[pos=0.5](mid) {} (mid.center)[->] -- +(.25, .3);
      \draw[green] (2.5, -1.5) -- (2.5, 1.5) node[above] {$\vw$} node[pos=0.75](mid) {} (mid.center)[->] -- +(.5, 0);
      \draw (3.5, 0) node {$P'$};
      \draw (0, 3) node {$P$};
      \draw[black] (-2.5, -2) -- (1, 2) node[above] {$\vw_s$} node[pos=0.5](mid) {} (mid.center)[->] -- +(-.35, .25);
  \end{tikzpicture}
\end{figure}

We claim that every profile $P'$ in $\cP$ which weakly orients $s$ as $\sv$ also weakly contains either $\sv\vee\wv_r$ or $\sv \vee\vw_r$.
This then impies that $\{\sv,\vs\wedge \vw_r,\vs\wedge \wv_r\}$ is a star in $\cF_d$, which will contradict the $\cF_d$-criticality of $\vs$.

So suppose that $P'$ weakly orients $s$ as $\sv$, witnessed by some $\vw \in P' $ with $ \vw\ge \sv$.

If $\vw_r\vee \vw$ had order at most the order of $\vw_r$, this would contradict the choice of $\vw_r$: By \cref{lem:distributive} applied to the separations $\vw_r,\vw_s, \wv\wedge\wv_r $, the profile $P$ would need to contain $\vw_r\vee \vw$ which contradicts the choice of $\vw_r$ being $\le$-maximal in $M_r^P$.

Similarly, if $\wv\wedge\vw_r $ had order less than the order of $\vw_r$, this would contradict the choice of $\vw_r$: By consistency $P$ would need to contain $\wv \wedge \vw_r $  which contradicts the definition of $M_r^P$, from which $\vw_r$ was chosen.

Thus, by submodularity, $\vw\wedge \vw_r$ has order less than the order of $w$, and $\vw\wedge \wv_r$ has order at most the order of $w$.
Hence, as $P'$ is strongly robust, $P'$ contains either $\wv\vee\wv_r$ or 
$\wv\vee \vw_r$ and therefore either weakly orients $\vs\wedge \vw_r$ as $\sv\vee\wv_r$  or $\vs\wedge \wv_r$ as $\sv \vee\vw_r$.

This proves the claim which results in a contradiction to the assumption that $\vs$ is $\cF_d$ critical.
Thus we may suppose that $\vw_r$ does not cross $\vs$ and, by a symmetric argument, that $\vw_s$ does not cross $\vr$. 
Hence $\vr\le \vw_s$ and $\vs\le \vw_r$. We may therefore assume without loss of generality that $\vw_r=\vw_s$. 

\begin{figure}[h]
  \centering
  \begin{tikzpicture}
      \draw[red] (-2, -2) -- (-2, 2) node[above] {$\vr$} node[pos=0.5](mid) {} (mid.center)[->] -- +(.5, 0);
      \draw[red] (2, -2) -- (2, 2) node[above] {$\sv$} node[pos=0.5](mid) {} (mid.center)[->] -- +(.5, 0);
      \draw[gray] (0, -1.5) -- (0, 1.5) node[above] {$t$};
      \draw[black, rounded corners=.5cm] (-1.5, 2.5) -- (0,0.5) -- (1.5, 2.5) node[above] {$\vw_r = \vw_s$} node[pos=0.7](mid) {} (mid.center)[->] -- +(-.3, .25);
      \draw (0, 3) node {$P$};
  \end{tikzpicture}
\end{figure}

If $\vw_r=\vw_s$ crosses $t$ then, by the choice of $t$, that neither $\vw_r\wedge \vt$ nor $\vw_r\wedge \tv$ has order less than $|t|$, thus $\vw_r\vee \vt$ and $\vw_r\vee\tv$ both have order at most the order of $\vw_r$. By the strong robustness of $P$ applied to $\vw_r, \wv_r\wedge \tv$ and $\wv_r\wedge\vt$, we know that either $\vw_r\vee \vt\in P$ or $\vw_r\vee\tv\in P$. However, both contradict the $\le$-maximal choice of $\vw_r$.
So, instead $\vw_r$ is nested with $t$, that is, $t$ has an orientation $\vt$ such that $\vt\le \vw_r$, so $t$ is weakly oriented by $P$, as claimed.
\end{proof}

Note that the assumption that our profiles are \emph{strongly} robust is essential in this argument, for example for the case $\vw_r=\vw_s$: If we only assume robustness, we can not conclude that $P$ contains either $\vw_r\join \vt$ or $\vw_r\join \tv$ and thus would not obtain a contradiction. 

The next step is to verify that shifting with a splice as in \cref{lem:critical_seps} maps stars in $\cF_d$ to stars in $\cF_d$, which will prove that $U$ is critically $\cF_d$-seperable:

\begin{LEM}\label{lem:critical_shift}
Let $r$ and $s_0$ be separations which are weakly oriented by every profile in $\cP$ and suppose that
$\vs_0$ us a splice for $\vr$. Let $\sigma\in \cF_d$ be a star which contains a separation $\vx\ge \vr$. Then the shift $\sigma_\vx^\vs_0$
of $\sigma$ from $\vx$ to $\vs_0$ is again an element of $\cF_d$.
\end{LEM}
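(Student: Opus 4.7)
The plan is to verify that $\sigma' \coloneqq \sigma_\vx^{\vs_0}$ satisfies each of the three defining conditions of $\cF_d$; that $\sigma'$ is a star is immediate from the definition of a shift. First I would bound the orders of the elements of $\sigma'$: by \cref{lem:splice_shift} applied to $\vx \ge \vr$, we have $|\vx \join \vs_0| \le |\vx|$. For each $\vy \in \sigma \setminus \{\vx\}$, the star property yields $\vy \le \xv \le \rv$, so $\yv \ge \vr$; applying \cref{lem:splice_shift} to $\yv$ gives $|\vy \meet \sv_0| \le |\vy|$. In particular, every profile $P \in \cP$ that orients all of $\sigma$ also orients all of $\sigma'$.

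The core of the argument is the correspondence that, for every $P \in \cP$, $\sigma' \subseteq P$ if and only if $\sigma \subseteq P$. The reverse implication is the easier one: from $\vx \join \vs_0 \in P$ and each $\vy \meet \sv_0 \in P$, the profile $P$ weakly orients every element of $\sigma$ in the forward direction, since $\vx \join \vs_0 \ge \vx$ and $\vy \meet \sv_0 \le \vy$. By regularity and consistency of $P$, no element of $\sigma$ can then be weakly reverse-oriented by $P$, so the contrapositive of condition~(2) of $\cF_d$ for $\sigma$ forces $\sigma \subseteq P$. The forward implication is the delicate one: assuming $\sigma \subseteq P$, I would use the profile property applied to $\vx \in P$ (respectively each $\vy \in P$) together with a witness $\vu \in P$ for $P$ weakly orienting $s_0$, and the strong robustness of $P$, to rule out that $P$ orients $\vx \join \vs_0$ (respectively $\vy \meet \sv_0$) in reverse.

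Given this equivalence, condition~(1) of $\cF_d$ for $\sigma'$ follows directly from condition~(1) for $\sigma$, and condition~(3) follows by invoking \cref{lem:efficient_shift}, which transfers \cref{prop:prof_eff} from $\sigma$ to $\sigma'$ once both sit inside $P$. For condition~(2), suppose $\sigma' \not\subseteq P$; then the equivalence gives $\sigma \not\subseteq P$, and condition~(2) for $\sigma$ yields some $\vz \in \sigma$ weakly reverse-oriented by $P$, say with witness $\vw \in P$ satisfying $\vw \ge \zv$. If $\vz = \vx$, then the same $\vw$ witnesses that $\vx \join \vs_0 \in \sigma'$ is weakly reverse-oriented, since $\vw \ge \xv \ge \xv \meet \sv_0 = (\vx \join \vs_0)^*$. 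If $\vz = \vy \neq \vx$, one combines $\vw \ge \yv$ with a witness for $P$ weakly orienting $s_0$, using strong robustness of $P$, to produce a witness $\ge \yv \join \vs_0 = (\vy \meet \sv_0)^*$ in $P$.

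The main obstacle is the forward direction of the equivalence $\sigma \subseteq P \Rightarrow \sigma' \subseteq P$. Pinning down the direction in which $P$ orients $\vx \join \vs_0$ and each $\vy \meet \sv_0$ requires a careful interplay between the profile property of $P$, \cref{lem:splice_shift}, and the strong robustness of $P$, because $P$ is only assumed to \emph{weakly} orient $s_0$ and in a direction we do not get to choose.
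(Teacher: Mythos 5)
Your overall plan of verifying the three defining conditions of $\cF_d$ is the right shape, and the order bounds via \cref{lem:splice_shift} and the appeal to \cref{lem:efficient_shift} for condition~(3) are both correct. But the proof is organised around a claim the paper neither needs nor, as far as I can tell, could use: the biconditional $\sigma\subseteq P\Leftrightarrow\sigma_\vx^{\vs_0}\subseteq P$. You concede the forward direction is ``the main obstacle'' and leave it unargued, and I believe it is in fact false in general: if $\sigma\subseteq P$ one does get $\vy\meet\sv_0\in P$ for each $\vy\in\sigma\sm\{\vx\}$ by consistency (since $(\yv\join\vs_0)^*\le\vy$), but nothing forces $\vx\join\vs_0\in P$ rather than $\xv\meet\sv_0\in P$; the profile property only yields $\xv\meet\vs_0\notin P$, and $P$ need not orient $x\join\sv_0$ at all. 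The detour is avoidable. The paper uses only the \emph{reverse} implication, which is exactly \cref{lem:shift}, and treats a profile $P$ with $\sigma\subseteq P$ but $\sigma'\not\subseteq P$ in one line for condition~(2): since every element of $\sigma'$ has order at most that of the corresponding element of $\sigma$, such a $P$ orients all of $\sigma'$, and if $\sigma'\not\subseteq P$ then $P$ outright contains (hence weakly contains) the inverse of some element of $\sigma'$.

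Two further gaps. First, your argument for the ``easy'' reverse implication is not correct as stated: $\vy\meet\sv_0\in P$ is a separation \emph{below} $\vy$, so it does not witness that $P$ weakly orients $y$ as $\vy$, and consistency alone does not rule out a $\vw\in P$ with $\vw\ge\yv$ unless one also knows $\vs_0\le\vw$. You should simply invoke \cref{lem:shift} here, as the paper does. Second, and more importantly, the subcase of condition~(2) where $\vz=\vy\ne\vx$ and $P$ weakly reverse-orients $\vy$ is precisely the technical heart of the lemma, and you reduce it to the phrase ``using strong robustness of $P$'' without giving the argument. The paper splits further on whether $P$ weakly orients $s_0$ as $\sv_0$ (in which case the witness for $\sv_0$ already lies above $(\vy\meet\sv_0)^*\le\sv_0$) or as $\vs_0$, and the latter case requires combining the splice property of $\vs_0$, submodularity, and the strong profile property (\cref{lem:distributive}) -- not just strong robustness -- to show $P$ contains a separation above $\yv\join\vs_0$. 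Without these steps the proof is incomplete.
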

\begin{proof}
Since $\vs_0$ is a splice for $\vr$, by \cref{lem:splice_shift}, $\vs\join \vs_0$ has at most the order of $\vs$ for every $\vs \ge \vr$.

Let $\sigma$ be any star in $\cF_d$ containing a separation $\vx\ge \vr$.
By the above, if $\sigma \subseteq \vS_k$ for some $k$ then also the shift $\sigma_\vx^\vs_0$ is a subset of $\vS_k$.
Hence by \cref{lem:shift}, every profile in $U$ which contains $\sigma_\vx^\vs_0$ also contains $\sigma$. Now if some profile $P$ contains $\sigma$, then $P$ orients every separation in $\sigma_\vx^\vs_0$, and thus either $P$ contains the inverse of some separation in $\sigma_\vx^\vs_0$ or $\sigma_\vx^\vs_0\subseteq P$.

Hence, by \cref{lem:efficient_shift} it is enough to show that every profile from $\cP$ which, for some $\vy\in \sigma$, weakly contains $\yv$ also weakly contains $\yv'$ for some separation $\vy'\in \sigma_\vx^\vs_0$.

So suppose such a profile $P$, for some $\vy\in \sigma$, weakly contains $\yv$ and suppose that this is witnessed by $\wv_y\in P$.
If $\vr\le \vy$, then $\yv$ is shifted onto $\yv\meet \sv_0$ and therefore $\vw_y$ also witnesses that $P$ weakly contains $\yv\meet \sv_0$ while $\vy\join \vs_0\in \sigma_\vx^\vs_0$. Thus we may suppose that $\vr\le \yv$ and therefore that $\yv$ is shifted onto $\yv\join\vs_0$.

If $P$ weakly orients $s_0$ as $\sv_0$, then $P$ also weakly contains $\vy\meet \sv_0\le \sv_0$ while $\yv\join \vs_0\in \sigma_\vx^\vs_0$.

Thus we may suppose that $P$ weakly orients $s_0$ as $\vs_0$, witnessed by $\vw_0\in P$. 

By our assumptions on $s_0$ we know that the order of $\vs_0\meet\wv_y$ is at least the order of $s_0$ and thus, by submodularity, $\sv_0\wedge\vw_y$ has order at most the order of $w_y$, i.e., it is oriented by $P$. By \cref{lem:distributive} applied to $\vw_0, \wv_y\in P$ and $\sv_0\wedge\vw_y$ we can therefore conclude that $P$ contains $\vs_0\vee\wv_y$, i.e., $P$ weakly contains $\yv\vee\vs_0\le \vs_0\vee\wv_y$.
\end{proof}

In order to use our stronger tangle-tree duality theorem \cref{thm:modifiedTTD} with our set $\cF_d$ of stars to obtain a tree of tangles for strongly robust profiles it only remains for us to show that this application cannot result in an $\cF_d$-tangle. We do this in the following two lemmas.

\begin{LEM}\label{lem:efficient_P_star}
    For every profile $P$ in $\vU$ and every set $\cP'$ of strongly robust profiles in $\vU$ distinguishable from $P$, there exists a nested set $N$ which distinguishes $P$ efficiently from all the profiles in $\cP'$.
\end{LEM}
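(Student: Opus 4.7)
The plan is an uncrossing argument on a minimally-chosen family of efficient distinguishers. For each $Q \in \cP'$, the hypothesis that $Q$ is distinguishable from $P$ gives an efficient $P$--$Q$-distinguisher $\vs_Q \in P$ of minimum order $\kappa_Q := \abs{s_Q}$; I would pick one such separation per $Q$ and set $N := \{s_Q : Q \in \cP'\}$. Among all such choices I would then fix one minimising the number of crossing pairs in $N$. The claim to prove is that this minimum is zero, so that $N$ is the desired nested set.

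Suppose for contradiction that $\vs := \vs_Q$ and $\vt := \vs_{Q'}$ in $N$ cross, with $\abs{s} \le \abs{t}$ after relabelling. Submodularity of the order function applied to $(\vs,\vt)$ and $(\vs,\tv)$ yields
\[ \abs{\vs\join\vt} + \abs{\vs\meet\vt} \le \abs{s}+\abs{t} \quad\text{and}\quad \abs{\vs\join\tv} + \abs{\vs\meet\tv} \le \abs{s}+\abs{t}, \]
so some corner of $\vs$ and $\vt$ has order at most $\abs{s}$. The goal is to exhibit such a corner that efficiently distinguishes $P$ from $Q$ or from $Q'$ and is nested with $\vt$ or $\vs$ respectively; replacing $\vs_Q$ or $\vs_{Q'}$ by this corner would strictly reduce the crossing count in $N$, since by the fish \cref{lem:fish} the new separation is nested with every $\vs_R$ that was already nested with both $\vs$ and $\vt$, contradicting the minimal choice.

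Producing this corner is a case analysis combining the profile property and consistency of $P$ (to place corners in $P$) with the profile property, consistency, and strong robustness of the profiles in $\cP'$ (to place the corresponding inverses in $Q$ or $Q'$). As a typical case, if $\abs{\vs\join\vt}\le\abs{s}$ then $\vs\join\vt \in P$ by the profile property of $P$, while consistency of $Q$ with $\sv\in Q$ forces $\sv\meet\tv \in Q$, so $\vs\join\vt$ is the sought corner. When only $\abs{\vs\meet\vt}\le\abs{s}$ and $\vt\in Q$, consistency forces $\vs\meet\vt \in Q$ as well, so one must pivot to $\vs\meet\tv$: strong robustness of $Q$ applied to $\sv$ and $\vt$ then yields $\sv\join\vt\in Q$ provided $\abs{\vs\meet\tv}\le\abs{s}$. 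The remaining sub-case $\abs{\vs\meet\tv} > \abs{s}$ is handled symmetrically using $Q'$ and its strong robustness, which either provides a corner for $\vs_{Q'}$ or unearths a $P$--$Q'$-distinguisher of order strictly below $\kappa_{Q'}$, contradicting its minimality and forcing $\abs{s}=\abs{t}$ (again admitting a replacement). The main obstacle is precisely this case analysis, which is delicate because the profiles in $\cP'$ may have order below that of $P$ and fail to orient some of the four corners outright; the strong robustness hypothesis on $\cP'$ is what lets one recover the orientation information that the profile property alone cannot supply.
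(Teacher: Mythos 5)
Your plan shares the paper's starting point (one efficient distinguisher per $Q$, then uncross) but the potential function you minimise does not close the argument. You minimise the number of crossing pairs in $N$ and propose to replace, say, $\vs_Q$ by a corner $c$ of $\vs_Q$ and $\vs_{Q'}$, invoking the fish lemma. But the fish lemma only guarantees that $c$ is nested with those $\vs_R$ that were nested with \emph{both} $\vs_Q$ and $\vs_{Q'}$; an $\vs_R$ that was nested with $\vs_Q$ but crossed $\vs_{Q'}$ may perfectly well cross $c$, so the total number of crossing pairs can stay equal or even increase after the replacement. The paper avoids this entirely by choosing each $\vs_Q$ to be $\le$-minimal among efficient $P$--$Q$-distinguishers in $P$ and then showing \emph{directly} that the resulting set is nested: every corner produced in the analysis is either of strictly smaller order than some $\vs_Q$ (contradicting efficiency) or strictly $\le$-below it (contradicting $\le$-minimality), so no global replacement step and no secondary counting argument is needed.

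Your case analysis also has loose ends. The case you flag as typical, $\abs{\vs\join\vt}\le\abs{s}$, is in fact impossible when $\abs{s}<\abs{t}$: since $\tv\in Q'$, consistency puts $(\vs\join\vt)^*$ into $Q'$ whenever $Q'$ orients it, so $\vs\join\vt$ would be a $P$--$Q'$-distinguisher of order below $\abs{t}$, contradicting efficiency of $\vs_{Q'}$. The paper uses exactly this observation as its opening move to get $\abs{\vs\join\vt}\ge\abs{t}$, hence $\abs{\vs\meet\vt}\le\abs{s}$, then splits on whether $Q'$ contains $\sv_Q$ or $\vs_Q$. Only in the latter branch does it need strong robustness, and only of $Q$: from $\le$-minimality of $\vs_{Q'}$ one gets $\abs{\sv_Q\meet\vs_{Q'}}>\abs{\vs_{Q'}}$, so submodularity gives $\abs{\vs_Q\meet\sv_{Q'}}<\abs{\vs_Q}$, and strong robustness of $Q$ applied to $\sv_Q$ and $\vs_{Q'}$ then yields one of the two corners below $\vs_Q$ as a better efficient $P$--$Q$-distinguisher. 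If you switch from crossing-count minimisation to per-$Q$ $\le$-minimality, your approach should reduce to this argument.
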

\begin{proof}
 For every profile $Q\in \cP'$ pick a $\le$-minimal separation $\vs_Q\in P$ which efficiently distinguishes $Q$ from $P$.
 We claim that the set $N$ consisting of all these separations $s_Q$ is nested and therefore as claimed.
 
 So suppose that this is not the case, so $\vs_Q$ and $\vs_{Q'}$, say, cross. We may assume without loss of generality that $\abs{\vs_Q}\le \abs{\vs_{Q'}}$. 
 Now $\vs_Q\join \vs_{Q'}$ has order at least the order of $\vs_{Q'}$ since otherwise, by the profile property, $\vs_Q\join \vs_{Q'}$ would also distinguish $P$ and $Q'$ and would thus contradict the fact that $\vs_{Q'}$ did so efficiently. 
 Thus $\abs{\vs_Q\meet \vs_{Q'}}\le \abs{\vs_Q}$.
 
Now $Q'$ orients $\vs_Q$ and it cannot contain $\sv_Q$ since then, by the profile property, $\vs_Q\meet \vs_{Q'}$ would also distinguish $P$ and $Q'$ efficiently and would therefore contradict the $\le$-minimal choice of $\vs_{Q'}$. 

Thus $\vs_Q\in Q'$. 
Now $\abs{\sv_Q\meet \vs_{Q'}}>\abs{\vs_{Q'}}$ since otherwise, again by the profile property, $\sv_Q\meet \vs_{Q'}$ contradicts the $\le$-minimal choice of $\vs_{Q'}$.

Thus, by submodularity, $\abs{\vs_Q\meet \sv_{Q'}}<\abs{\vs_Q}$ and $\abs{\vs_Q\meet \vs_{Q'}}\le \abs{\vs_Q}$. 
But, by strong robustness, either $\sv_Q\join \vs_{Q'}$ or $\sv_Q\join \sv_{Q'}$ is in $Q$. 
In particular, $\vs_Q\meet \sv_{Q'}$ or $\vs_Q\meet \vs_{Q'}$ efficiently distinguishes $P$ and $Q$ and therefore contradicts the $\le$-minimal choice of $\vs_Q$.
\end{proof}

Unlike for structurally submodular separation systems in \cref{lem:non_profiles} or efficient distinguishers in \cref{lem:profiles_stars}, in this setup we can not necessarily find a star in $\cF_d$ which is contained in $O$ but in no profile in $\cP$ for every orientation $O$ of $U$ which not include any profile in our set $\cP$ of strongly robust profiles .
This is because we require that every profile in $\cP$ weakly orients a separation in our star outwards, but the stars constructed in \cref{lem:profiles_stars}, for example, do not necessarily have this property. Thus we are going to, instead, find a star $\sigma$ contained in both $O$ and exactly one profile from $\cP$. Since each such star also lies in $\cF_d$, this will be enough to ensure that our application of \cref{thm:modifiedTTD} does not result in an $\cF_d$-tangle.

\begin{LEM}\label{lem:no_tangle}
    For every consistent orientation $O$ of $\vU$ and every set $\cP\neq \emptyset$ of distinguishable strongly robust profiles in $\vU$ there exists a star $\sigma$ in $\cF_d$ 
    contained in $O$.
\end{LEM}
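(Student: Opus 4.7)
Following the template of \cref{lem:profile-star-efficient}, the plan is to construct $\sigma$ as the set of $\le$-maximal elements of a nested collection of $\le$-minimal efficient distinguishers originating in a single profile $P\in\cP$. The key additional requirement here is that $\sigma$ must lie inside $O$ rather than merely inside some $P\in\cP$, as was the case in \cref{lem:profile-star-efficient}.

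The first and most delicate step is to identify a profile $P\in\cP$ such that, for every other $P'\in\cP$, some efficient $(P,P')$-distinguisher lies in $P\cap O$. I would establish this by an extremal argument on $\cP$: pick $P$ maximal with respect to a partial order measuring ``agreement with $O$ on efficient distinguishers''. If no such $P$ existed, one would obtain a cycle of profiles in $\cP$ whose efficient distinguishers are all oriented by $O$ away from the profile itself, and this cycle can be ruled out by combining strong robustness with the distributive law (\cref{lem:distributive}), much in the spirit of the argument from \cref{lem:critical_seps}.

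With such a $P$ chosen, for each $P'\in\cP\setminus\{P\}$ pick a $\le$-minimal efficient $(P,P')$-distinguisher $\vs_{P'}$ lying in $O$. The argument of \cref{lem:efficient_P_star} (using submodularity together with, where needed, strong robustness) shows that the set $N=\{\vs_{P'}:P'\in\cP\setminus\{P\}\}$ is nested: any crossing of two such distinguishers would, by submodularity, produce a corner of strictly smaller order which still distinguishes $P$ from one of the profiles, contradicting efficiency. The $\le$-maximal elements of $N$ then form a star $\sigma\subseteq P\cap O$.

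The verification that $\sigma\in\cF_d$ is routine: only $P$ contains $\sigma$, since each element of $\sigma$ lies above some $\vs_{P'}$ which separates $P$ from $P'$; each $P'\ne P$ weakly orients some element of $\sigma$ outward, witnessed by $\sv_{P'}\in P'$ lying below the outward orientation of the corresponding $\vs\in\sigma$; and $\sigma$ satisfies~\Eff{P} by efficiency of the $\vs_{P'}$, mirroring the verification in \cref{lem:profile-star-efficient}. The main obstacle of the proof is thus the first step: establishing the existence of a profile $P\in\cP$ whose efficient distinguishers can be chosen inside $O$, which is where the full strength of strong robustness and distributivity is needed.
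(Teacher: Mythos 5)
Your overall approach is genuinely different from the paper's, and the key step of it is left unproven in a way that does not appear straightforward to fill.

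The paper's proof does \emph{not} begin by singling out a profile $P$. Instead it starts from $\sigma=\es$ (which is vacuously a subset of every profile) and iteratively refines $\sigma\subseteq O$ while \emph{decreasing} the number of profiles that include $\sigma$: if two profiles $P_1,P_2\supseteq\sigma$ remain, it picks an efficient $P_1$--$P_2$-distinguisher $s$ of minimal order, lets $O$ orient it, and shows (uncrossing against $\sigma$ and using property~(v) together with robustness and submodularity) that one of the corners of $s$ with an element of $\sigma$ can be added to $\sigma$, cutting down the profile count. Crucially, the identity of the ``surviving'' profile is \emph{discovered} by the algorithm, not chosen in advance: $O$ orients the distinguisher, and whichever of $P_1,P_2$ agrees with $O$ on it stays. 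This sidesteps exactly the difficulty your plan runs into.

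Your step~1 --- the existence of a single $P\in\cP$ such that for \emph{every} $P'\in\cP$ some efficient $(P,P')$-distinguisher lies in $P\cap O$ --- is the entire content of the lemma and is not justified. You gesture at an extremal argument and at ``ruling out a cycle'' via strong robustness and distributivity, but no order or weight is actually defined, and it is not clear that the relation ``$O$ agrees with $P$ on some efficient distinguisher from $P'$'' is acyclic or even transitive. The paper never needs to resolve this global question, because its greedy minimisation builds the witnessing profile into the construction. Moreover, even granting step~1, your nestedness argument needs the uncrossed corners to land back in $O\cap P$ (not just in $P$ as in \cref{lem:efficient_P_star}); this does work out by consistency of $O$ for the corners $\le\vs_{P'}$, but it is another place where the ``inside $O$'' constraint must be tracked explicitly. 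As it stands, your proposal reduces the lemma to an unproven claim that is at least as hard as the lemma itself.
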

\begin{proof}
Pick a star $\sigma$ (not necessarily from $\cF_d$) with the following properties:
\begin{enumerate}[(i)]
\item $\sigma\subseteq O$.
\item $\sigma$ is contained in at least one profile in $\cP$.
\item \Cref{prop:prof_eff} is satisfied for every profile $P\in \cP$ such that $\sigma\subseteq P$.
\item Every $P\in \cP$ either contains $\sigma$ or weakly contains $\sv$ for some separation $\vs\in \sigma$.
\item For every separation $\vs\in \sigma$ and any profile $\sigma\subseteq P$ there exists a profile $Q\in\cP$ such that $s$ is a efficient $P$-$Q$ distinguisher.
\end{enumerate}
Note that the empty set is such a star. Let us further assume that we choose our star $\sigma$ fulfilling (i)-(v) such that as few profiles in $\cP$ as possible contain $\sigma$.

If only one profile contains $\sigma$ then $\sigma\in \cF_d$ is as desired, so let us suppose for a contradiction that there are at least two such profiles.

Pick two such profiles $P_1,P_2\supseteq \sigma$ such that the order of an efficient $P_1$-$P_2$-distinguisher is as small as possible. Pick an efficient $P_1$-$P_2$-distinguisher $s$ which crosses as few elements of $\sigma$ as possible. $O$ orients $s$, say $\vs\in O$. If $s$ is nested with $\sigma$, the maximal elements of $\sigma\cup \{\vs\}$ form a star violating the definition of $\sigma$: Every profile containing this new star also contains $\sigma$. To see that (iii) is fulfilled, note that there is no profile $P\supseteq \sigma$ in $\cP$ such that $\vs\in P$ for which there is a $\vs'$ of lower order than $\vs$ such that $\vs\le\vs'\in P$, since such an $\vs'$ would be a distinguisher of lower order than $\vs$ for some pair of profiles containing $\sigma$, contrary to the coice of $s$.

Thus we may assume that $s$ is not nested with $\sigma$, say $s$ crosses $\vt\in \sigma$. Since, by(v), there is some profile $Q\ni\tv$ for which $t$ is an efficient $P_1$-$Q$-distinguisher, we know that at least one of $\vs\wedge\vt$ and $\sv\wedge\vt$ has order at least the order of $t$: Otherwise this would contradict the fact that $t$ is an efficient $P_1$-$Q$-distinguisher by robustness (if $|t|<|s|$) or the profile property (if $|s|\le|t|$) of $Q$.

Thus by submodularity the order of at least one of $\vs\vee\vt$ and $\sv\vee\vt$ is at most the order of $s$ and that separation is therefore also an efficient $P_1$-$P_2$-distinguisher (by the profile property and consistency), which would make it a better choice for $s$, a contradiction.

Thus $\sigma$ contains precisely one profile and therefore, by construction, $\sigma\in \cF_d$.
\end{proof}

Together with \cref{thm:modifiedTTD}, these lemmas give a proof of a tree-of-tangles theorem for strongly robust profiles of different orders in a submodular universe. This theorem does not give efficient distinguishers; we will deal with efficiency in a later step.

\begin{THM}[Tree-of-tangles theorem for different orders] \label{thm:different_non}
Let ${\vU=(\vU,\le,^*,\vee,\wedge,|\ |)}$ be a submodular distributive universe of separations. Then for
every distinguishable set $\cP$ of strongly robust profiles in $\vU$ there is a nested set $T = T(\cP)\subseteq U$ of
separations such that:
\begin{enumerate}[(i)]
 \item  every two profiles in $\cP$ are distinguished by some separation in $T$;
 \item for any profile $P\in \cP$, any maximal $\vs\in P\cap \vT$ and any $\vs'\in P$ such that $\vs\le\vs'$ we have $|\vs|\le|\vs'|$. \label{thm_property:efficient}
\end{enumerate}
\end{THM}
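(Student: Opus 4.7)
The plan is to apply the strengthened duality theorem~\cref{thm:modifiedTTD} to the set of stars $\cF_d$ defined above, restricted to a large enough finite $\vS_k\sub \vU$, and to take the edge labels of the resulting $S$-tree as the desired nested set.

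First I will verify that $\cF_d$ is standard for $S = \vS_k$: every cotrivial $\sv \in \vS$ has $\vs$ trivial, so every profile in $\cP$ that orients $s$ does so as $\vs$, and the singleton star $\{\sv\}$ satisfies the three defining conditions of $\cF_d$. Next I will verify critical $\cF_d$-separability: given $\cF_d$-critical separations $\vr\le \rv'$, I take $\vs_0$ to be a splice between $\vr$ and $\rv'$. As $\cF_d$-critical separations appear in stars of~$\cF_d$, they are weakly oriented by every profile in~$\cP$; \cref{lem:critical_seps} then ensures that $\vs_0$ is also weakly oriented by every profile. Combining \cref{lem:splice_shift} with \cref{lem:critical_shift} yields that $\vs_0$ emulates $\vr$ in~$S$ for~$\cF_d$, and symmetrically $\sv_0$ emulates $\rv'$ in~$S$ for~$\cF_d$.

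Applying \cref{thm:modifiedTTD} then gives either an $\cF_d$-tangle of $S$ or an $S$-tree $(T_0,\alpha)$ over~$\cF_d$. The former is ruled out by \cref{lem:no_tangle}, which supplies a star in $\cF_d$ contained in any consistent orientation, contradicting tangle avoidance. After cleaning up $(T_0,\alpha)$ via \cref{lem:TreeSets_6.2} to be irredundant, I set $T \coloneqq \alpha(\vE(T_0))$, which is nested by \cref{lem:TreeSets_6.3}. For property~(i), each profile $P\in \cP$ navigates the tree to a sink $v_P$ with $\alpha(v_P)\subseteq P$, using strong orientations where $P$ has high enough order and weak orientations (justified by property~2 of~$\cF_d$) otherwise; property~1 of~$\cF_d$ then forces $v_P\ne v_Q$ for distinct profiles, so some edge on the $v_P$-$v_Q$ path carries a distinguishing separation. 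For property~(ii), the maximal elements of $P\cap \vT$ form the star $\alpha(v_P)$, which satisfies \Eff{P} (property~3) and so yields the required inequality for any $\vs'\in P$ above a maximal $\vs\in P\cap \vT$.

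The main obstacle I expect is the careful handling of weak orientations in the sink-navigation argument for property~(i): profiles of different orders need not strongly orient every separation in~$T$, so obtaining $\alpha(v_P)\subseteq P$ strongly (not only in an extension of $P$ via weak containment) and ultimately producing a strongly distinguishing separation on the $v_P$-$v_Q$ path both require careful arguments. Property~2 of~$\cF_d$ is the key structural tool that makes these go through.
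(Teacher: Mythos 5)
Your approach coincides with the paper's: apply \cref{thm:modifiedTTD} to $\cF_d$, rule out an $\cF_d$-tangle via \cref{lem:no_tangle}, verify critical $\cF_d$-separability via splices and Lemmas \ref{lem:critical_seps} and \ref{lem:critical_shift}, and take the edge labels of the resulting (irredundant) $S$-tree. Your reading of property~(ii) as an immediate consequence of the third defining condition of~$\cF_d$ also matches the paper.

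The one place where your sketch genuinely falls short is the final step of property~(i), which you flag as ``requiring careful arguments'' but do not supply. The claim that ``some edge on the $v_P$-$v_Q$ path carries a distinguishing separation'' does not follow merely from $v_P\ne v_Q$: the profiles may have low order and so need not (strongly) orient the edges between the sinks, in which case an arbitrary edge on the path is not a distinguisher. The missing argument is this: the first edge out of $v_P$ towards $v_Q$ carries some $\vs\in\sigma_P\sub P$, the first edge out of $v_Q$ towards $v_P$ carries some $\vt\in\sigma_Q\sub Q$, and these satisfy $\vs\le\tv$. Take whichever of $s,t$ has lower order, say $s$. Then since $Q$ orients $t$ and $|s|\le|t|$, $Q$ also orients $s$; and since $\vt\in Q$ with $\vt\le\sv$, consistency forces $\sv\in Q$. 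With $\vs\in P$ this makes $s$ a genuine $P$--$Q$-distinguisher in $T$. You also need the observation (which you gesture at) that $\sigma_P\sub P$ is forced by condition~2 of~$\cF_d$, because otherwise $P$ would weakly orient some element of $\sigma_P$ outwards and $v_P$ would not be a sink. Once these two points are made explicit, your proof is complete and is the paper's.
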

\begin{proof}
By \cref{lem:critical_seps} and \cref{lem:critical_shift} the set $U$ is critically $\cF_d$-separable for the set  $\cF_d$ defined above.
Thus we can apply \cref{thm:modifiedTTD}. This can, by \cref{lem:no_tangle}, not result in an $\cF_d$-tangle, thus there is an $U$-tree over $\cF_d$. 
By \cref{lem:TreeSets_6.2} we may assume this $U$-tree to be irredundant.
The set of separations associated to edges of this tree is then a nested set $T$.

Every profile in $\cP$ induces a consistent orientation of $T$, since all the separations in $T$ are weakly oriented by every profile in $\cP$. The maximal elements of this orientation form a star $\sigma_P$ in $\cF_d$, and this star is a subset of $P$ by the definition of $\cF_d$.

To see that $T$ distinguishes every pair of profiles in $\cP$, consider two profiles $P$ and $Q$ in $\cP$.
These two profiles cannot induce the same orientation of $T$, since then $\sigma_P = \sigma_Q$ would be a subset of both $P$ and $Q$, contradicting the definition of $\cF_d$. Thus some $\vs \in \sigma_P$ witnesses that $P$ weakly orients some $\vt \in \sigma_Q$ as $\tv$ and, vice versa, $\vt$ witnesses that $Q$ weakly contains $\sv$. Of these two separations $s$ and $t$, the one of lower order is thus a $P$--$Q$-distinguisher in $T$.

\Cref{thm_property:efficient} is then immediate from the definition of $\cF_d$.
\end{proof}

Note that the nested set constructed in \cref{thm:different_non} does not yet necessarily distinguish any two profiles \emph{efficiently}. However, we can use \cref{thm:efficient} in combination with \cref{thm:different_non} to obtain such a set:
  \begin{THM}[Efficient tree\,-\,of\,-\,tangles theorem for different order profiles] \label{thm:different_eff}
Let\linebreak  ${\vU=(\vU, \le, ^*,\vee,\wedge,|\ |)}$ be a submodular distributive universe of separations. Then for
every distinguishable set $\cP$ of strongly robust profiles in $\vU$ there is a nested set $T = T(\cP)\subseteq U$ of
separations such that every two profiles in $\cP$ are efficiently distinguished by some separation in $T$.
\end{THM}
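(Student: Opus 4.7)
The plan is to combine \cref{thm:different_non} with an iterative shifting argument in the style of the proof of \cref{thm:efficient}, using \cref{lem:critical_shift} in place of \cref{lem:efficient_shift}. Starting from the nested set produced by \cref{thm:different_non}, we pick the one that efficiently distinguishes as many pairs of profiles as possible, and show by contradiction that it must then be all of them.

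First, apply (the proof of) \cref{thm:different_non} to obtain an irredundant $U$-tree $(T,\alpha)$ over $\cF_d$ whose edge labels form a nested set distinguishing all profiles in $\cP$. Among all such $U$-trees over $\cF_d$, pick $(T,\alpha)$ so as to efficiently distinguish a maximum number of pairs from $\cP$. Suppose for a contradiction that some pair $P_1, P_2 \in \cP$ is distinguished by $\alpha$ but not efficiently, and choose such a pair so that the path in $T$ between the respective sink nodes $v_{P_1}$ and $v_{P_2}$ of the weak orientations induced by $P_1$ and $P_2$ contains no other profile-node.

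Let $\vs_{P_i}$ denote the orientation of the first edge of this path at $v_{P_i}$ pointing away from $v_{P_i}$; by order-respectness and nestedness we have $\vs_{P_1} \le \sv_{P_2}$. As in the proof of \cref{thm:efficient}, using \cref{prop:prof_eff} for the stars $\alpha(v_{P_1}), \alpha(v_{P_2}) \in \cF_d$ together with the fish \cref{lem:fish}, we find an efficient $P_1$--$P_2$-distinguisher $t$ with $\vs_{P_1} \le \vt \le \sv_{P_2}$; such $\vt$ is a splice for $\vs_{P_1}$, and $\tv$ is a splice for $\vs_{P_2}$. The key observation is that $t$ is weakly oriented by \emph{every} profile in $\cP$: since no profile from $\cP$ has its node on the path between $v_{P_1}$ and $v_{P_2}$, each profile $P$ weakly contains either $\sv_{P_1}$ or $\sv_{P_2}$, and from $\tv \le \sv_{P_1}$, respectively $\vt \le \sv_{P_2}$, this propagates to $P$ weakly orienting $t$. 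Together with the fact that $\vs_{P_1}, \vs_{P_2}$ are themselves weakly oriented by every profile (as edge labels of the tree from \cref{thm:different_non}), \cref{lem:critical_shift} then yields that $\vt$ emulates $\vs_{P_1}$ in $U$ for $\cF_d$ and $\tv$ emulates $\vs_{P_2}$ in $U$ for $\cF_d$.

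With these emulation properties in hand, split $T$ along the path, shift the two resulting subtrees via $\shifting{\vs_{P_1}}{\vt}$ and $\shifting{\vs_{P_2}}{\tv}$ (applying \cref{lem:shift_S-tree}), and glue them along a new edge labelled $t$ to form a tree $(T',\alpha')$, again a $U$-tree over $\cF_d$ which we may assume to be irredundant. The hard part is to verify that $(T',\alpha')$ efficiently distinguishes every pair that $(T,\alpha)$ did, plus the pair $P_1, P_2$; this contradicts the maximal choice of $(T,\alpha)$ and finishes the proof. For $P_1, P_2$ this is immediate from the new $t$-edge. For a pair $Q_1, Q_2$ previously efficiently distinguished by some $\vr \in \alpha(T)$ which no longer labels an edge of $T'$, the label $\vr$ has been shifted to a corner separation of $r$ and $t$; by \cref{lem:splice_shift} each such corner has order at most $\abs{r}$, and a case analysis using the profile property and consistency, exactly as in the proof of \cref{thm:efficient}, shows that one of these corners or $\vt$ itself still efficiently distinguishes $Q_1, Q_2$ in $(T',\alpha')$. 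The one subtlety compared to the $\vS_k$-setting of \cref{thm:efficient} is that $Q_1, Q_2$ may only \emph{weakly} orient some of the separations involved, but since the relevant order inequalities and weak-orientation relations are preserved under the shift, the argument carries through.
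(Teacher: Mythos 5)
There is a genuine gap in the key claim that $t$ is weakly oriented by every profile in $\cP$. You argue that, since no profile-node lies on the $v_{P_1}$--$v_{P_2}$ path, each profile $P\in\cP$ weakly contains $\sv_{P_1}$ or $\sv_{P_2}$. But the path can have internal nodes that are simply non-profile nodes, and a profile $Q$ whose node hangs off such an internal node weakly contains \emph{both} $\vs_{P_1}$ and $\vs_{P_2}$ (not their inverses), because its sink lies between $v_{P_1}$ and $v_{P_2}$ in the tree. For such a $Q$ the propagation argument gives nothing: from $\vs_{P_1}\le\vt\le\sv_{P_2}$ and $\vs_{P_1}\le\vw$, $\vs_{P_2}\le\vw'$ with $\vw,\vw'\in Q$ one cannot conclude that $Q$ weakly orients~$t$, and if $Q$ has low order while $t$ has high order there is no reason it should. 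Note also that $\vs_{P_1}$ and $\vs_{P_2}$ are typically \emph{not} $\cF_d$-critical (both orientations occur in stars $\alpha(v_{P_1})$, $\alpha(u_1)\in\cF_d$), so you cannot fall back on \cref{lem:critical_seps} to obtain weak orientability of the splice. Without this property \cref{lem:critical_shift} is unavailable, the shifted stars need not lie in $\cF_d$, and the glued tree $(T',\alpha')$ may contain separations that some profile does not even weakly orient -- such a profile may then fail to be distinguished by $(T',\alpha')$ at all, so the maximality argument collapses.

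This is precisely the obstruction that forces the paper to take a different route: instead of a single global shift, it decomposes $T$ into the maximal subtrees $T_i$ whose interiors contain no profile-nodes, fixes the boundary star $\vL_i$ of each, and reruns \cref{thm:efficient} inside the sub-universe $U^i$ of separations lying between the elements of $\vL_i$. Every replacement separation then sits above each $\vs\in\vL_i$, so it is automatically weakly oriented by all profiles outside $T_i$ (via $\sv$), and the profiles at the leaves of $T_i$ are handled as honest $k$-profiles in the restricted universe. Your strategy is a natural first attempt, but it needs a further idea to control the profiles branching off the interior of the path before the shift can be performed.
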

\begin{proof} Let $k$ be the maximal order of a profile in $U$.
 Let $T$ be the $U$-tree over $\cF_d$ from the proof of \cref{thm:different_non}. We consider the $\subseteq$-~maximal subtrees $T_i$ of $T$ with the property that no internal node of $T_i$ corresponds to a profile in $\cP$. Clearly $T=\bigcup_{i=1}^mT_i$ and no two $T_i$ share an edge.
 
 We are going to simultaneously replace each of the nested sets of separations corresponding to the $T_i$s with other separations in such a way that the resulting set of separations is still nested and we ensured that every pair of profiles contained in some $T_i$ is efficiently distinguished by this new set of separations.

 So, given some $T_i$, let $\cP_i$ be the set of profiles in $\cP$ living, in $T$, in one of the leaves of $T_i$. Let $\vL_i$ be the set of all separations associated to one of the directed edges adjacent and pointing away from such a leaf. Note that $\vL_i$ is a star.
 For every $\vs\in \vL_i$ let $P_s \in \cP_i$ be the unique profile corresponding to a leaf of $T_i$ and containing $\sv$. 
 
 It is easy to check that for any two profiles $P$ and $Q$ in $\cP_i$ there is a efficient $P$--$Q$-distinguisher $t$ which is nested with all of $\vL_i$: Pick one $t$ which is nested with as many separations from $\vL_i$ as possible. Now $t$ cannot cross an $\vs\in \vL_i$ such that $P_s=P$ or $P_s=Q$, as in that case, for $\vt\in P_s$, either $\vt\join \sv$ or $\vt\meet\sv$ would, by submodularity, consistency and the profile property, be an efficient $P$-$Q$--distinguisher and as such contradict the choice of $t$ by \cref{lem:fish}. If on the other hand $t$ crosses some $\vs\in \vL_i$, such that $P_s\notin \{P,Q\}$, then not both of $\sv\join \vt$ and $\sv\join \tv$ can have order less than the order of $s$ by the profile property since, by \cref{thm_property:efficient}, there is no $\sv'\in P_s$ such that $\sv\le \sv'$ and $|\vs|>|\vs'|$.
 Thus the order of either $\vs\join \vt$ or $\vs\join \tv$ is at most the order of $t$, however by \cref{lem:distributive} and the fish \cref{lem:fish} this separation then contradicts the choice of $t$.

 Moreover, there exists such an efficient $P$--$Q$-distinguisher $t$ which has an orientation $\vt$ such that $\vs\le \vt$ for every $\vs \in \vL_i$: Otherwise $\sv\le\vt$ for some orientation of $t$ and if neither $P=P_s$ nor $Q=P_s$ then both $P$ and $Q$ would weakly orient $t$ as $\tv$ since they weakly contain $\vs$. On the other hand if $P=P_s$, say, then, again by \cref{thm_property:efficient}, the order of $t$ is at least the order of $s$, thus $s$ itself would be the required efficient $P$--$Q$-distinguisher.
 
 Now consider, for every $T_i$, the set $U^i$ of all separations $t$ in $U$ nested with $\vL_i$ and fulfilling the additional property of having, for every $\vs\in \vL_i$, an orientation such that $\vs\le \vt$, i.e. $U^i$ is the set of all separations in $U$ inside of $\vL_i$. $\vU^i$ is closed under $\join$ and $\meet$ in $\vU$ by the fish \cref{lem:fish}, thus the restriction of $U$ to $U^i$ is again a submodular universe of separations.
 
Given any $\vs\in \vL_i$, the down-closure of $\sv$ is a regular profile of $U^i$. Note that every efficient distinguisher for the profiles induced by $\sv_1$ and $\sv_2\in \Lv_i$ on $U^i$ is also an efficient distinguisher of $P_{s_1}$ and $P_{s_2}$.

By \cref{thm:efficient} applied to the set of all separations of order less than $k$ in $U^i$, we thus find a $U^i$-tree $\hat{T^i}$ over $\cF_e$ (defined for $\cP_i$). The corresponding nested set $N_i$ efficiently distinguishes all these profiles induced by some $\sv_i\in \Lv_i$. 

But now the nested $N$ given by $\bigcup_{i=1}^m(N_i\cup L_i)$ is as desired: It is easy to see that this set is nested and every $N_i$ efficiently distinguishes any two profiles in $\cP_i$. Moreover, we  only ever changed separations inside of $\vL_i$ for every $T_i$.

The set $N$ also contains an efficient $P$--$Q$-distinguisher for profiles $P$ and $Q$ in different~$T_i$s:
A profile $R$ whose node in $T$ lies on the path between the nodes containing $P$ and $Q$, respectively, also does so in the tree induced by $N$. Thus, if we have efficient distinguishers for $P$ and $R$ and for $R$ and $Q$, respectively, in $N$, then one of the two is also an efficient $P$--$Q$-distinguisher. An inductive application of this argument proves the claim, that
the set $N$ efficiently distinguishes any two profiles in $\cP$.
\end{proof}

\bibliography{collective}
\end{document}